\DeclareMathOperator{\supp}{supp}
\DeclareMathOperator*{\EssSup}{ess\,sup}
\newcommand{\Div}{\mathrm{div}}
\newcommand{\ee}{\varepsilon}
\newcommand{\pre}{ \mathrm{p}}
\newcommand{\uu}{ u}
\newcommand{\I}{\mathcal{I}}
\newcommand{\EE}{\mathcal{E}}
\newcommand{\RR}{\mathbb{R}}
\newcommand{\NN}{\mathbb{N}}
\newcommand{\ssubset}{\subset\joinrel\subset}
\newcommand{\rightharpoonupstar}{\stackrel{\ast}{\rightharpoonup}}
\newtheorem{theorem}{Theorem}[section]
\newtheorem{prop}[theorem]{Proposition}
\newtheorem{lemma}[theorem]{Lemma}
\newtheorem{remark}[theorem]{Remark}
\DeclareFontFamily{OT1}{rsfs}{}
\DeclareFontShape{OT1}{rsfs}{m}{n}{ <-7> rsfs5 <7-10> rsfs7 <10-> rsfs10}{}
\DeclareMathAlphabet{\mycal}{OT1}{rsfs}{m}{n}
\newcommand\smallc{
  \mathchoice
    {{\scriptstyle\mathcal{C}}}
    {{\scriptstyle\mathcal{C}}}
    {{\scriptscriptstyle\mathcal{C}}}
    {\scalebox{.7}{$\scriptscriptstyle\mathcal{C}$}}
  }
\newcommand*\Lbar{\ooalign{${\displaystyle L}$\cr\hidewidth\raisebox{.3ex}{--}\hidewidth}}
\newcommand*\lbar{\ooalign{${\displaystyle L}$\cr\hidewidth\raisebox{.3ex}{\hspace{-0.1cm}--}\hidewidth}}
\begin{document}

\title[Colloidal Homogenisation of nematic-LC flows]{\textbf{Colloidal Homogenisation for the Hydrodynamics of Nematic Liquid Crystals}}

\author[
    F. De Anna\qquad 
    A. Schl\"omerkemper\qquad 
    A. Zarnsescu
]{
    Francesco De Anna$^1$ \qquad  
    Anja Schl\"omerkemper$^2$\qquad 
    Arghir Zarnescu$^{3}$
}


 \address{
 $\,^{1,\,2}$ Institute of Mathematics, University of W\"urzburg \\
 $\,^3$BCAM, Basque Center for Applied Mathematics\\
 $\,^3$IKERBASQUE, Basque Foundation for Science\\
 $\,^3$``Simion Stoilow'' Institute of the Romanian Academy
 \\
 \medskip
 $\,^1$francesco.deanna@uni-wuerzburg.de\\
 $\,^2$anja.schloemerkemper@uni-wuerzburg.de\\
 $\,^3$azarnescu@bcamath.org
}

\begin{abstract}

This paper analytically explores a simplified model for the hydrodynamics of nematic liquid crystal colloids. We integrate a Stokes equation for the velocity field with a Ginzburg-Landau transported heat flow for the director field. The study focuses on a bounded spatial domain containing periodically distributed colloidal particles, which impose no-anchoring conditions on the nematic liquid crystal. By progressively reducing the particle size to zero and simultaneously increasing the number of particles, we delve into the associated homogenisation problem. Our analysis uncovers a form of decoupling where the velocity field asymptotically satisfies a Darcy equation, independent of the director, while the director follows a gradient flow, unaffected by the velocity field. One of the most intricate aspects of the homogenisation process is the absence of an extension operator for the director field that preserves the uniform estimates related to the system's energy. We address this challenge with a novel variation of the Aubin-Lions lemma, specifically adapted for homogenisation problems.


\end{abstract}

\maketitle



\section{Introduction}

\noindent 
 Liquid-crystal colloids (LC colloids) have become a prominent topic in condensed matter physics, featuring a combination of a liquid crystal  as surrounding fluid  and a cluster of microscopically-sized particles. These particles are suspended and can be strategically engineered to leverage the molecular orientation of the liquid crystal, leading to the development of novel materials with improved properties \cite{WOS:A1997WP05600034,WOS:000374357900011,WOS:000283385400001,MR3790092}. 
 

\noindent
In recent years, the mathematical community has experienced a renewed interest in LC colloids, driven by a desire to accurately predict and quantify their key characteristics. This resurgence reflects a growing recognition of the importance and potential applications of these colloids in various fields, ranging from the realms of food and pharmaceuticals to the intricate area of nanoparticle transport. 
Some biomedical applications concern for instance cancer therapies, using nanoparticles as markers to monitor biological binding reactions and for targeted drug delivery (see for instance \cite{zhang2023liquid}).
In \cite{WOS:000374357900011}, additionally, Lavrentovich highlighted how active colloids can be manipulated by influencing the host LC, thanks to techniques such as LC-enabled electrokinetics and defect entrapment.

\noindent 
Despite the broad range of applications, the most mathematical studies have focused however uniquely on the analysis of a static LC host, where the colloidal configurations minimise some energy functionals subjected to homogenisation (cf.~for instance \cite{MR3717151,MR4314141,MR3519975,MR2112873,WOS:000346382700007,WOS:000412437400004}). To the best of our knowledge, there have been limited analytical findings for the case of the hydrodynamics of LC colloids.  
Our work provides a foundation in this regard by investigating the homogenisation problem of a  simplified version of the well-known Ericksen-Leslie equations (cf.~System \eqref{maineq1}-\eqref{in-cdt}), which describe the hydrodynamics of nematic LC upon the director formalism of Oseen and Frank.
More precisely, we consider the following partial differential equations, which couple a Stokes equation for the velocity field $u_\ee$ and a transported gradient flow for the director $d_\ee$:
\begin{alignat}{8}
    - \Delta u_\ee + \nabla \pre_\ee &= - (\nabla d_\ee)^T \Delta d_\ee 
    + F_\ee - \Delta H_\ee
    \qquad		&&(0,T)\times \Omega_\ee,						 	\label{maineq1}\\
	\Div\, u_\ee & = 0	\qquad		&&(0,T)\times \Omega_\ee,		\label{maineq2}\\
	\partial_t d_\ee + u_\ee \cdot  \nabla d_\ee - \Delta d_\ee &= -\big(|d_\ee|^2 -1\big)d_\ee		\qquad		&&(0,T)\times \Omega_\ee. \label{maineq3}
\end{alignat}
This system is a simplified version of a more comprehensive director model for nematic liquid crystals (cf.~also \Cref{rmk:pressure} and \cite{MR1329830}, where the Stokes equations are replaced by a Navier-Stokes counterpart) 
and which does not contain the so-called stretching terms (see for instance \cite{liu2008energetic}). The advantage of using this model is that it allows one to clearly  see the main mathematical challenges, see Section~\ref{sec:challenges}. We believe that some of the tools developed here will be relevant 
for more sophisticated versions of the liquid crystal systems.

\noindent We supplement the equations with initial conditions of the director (cf.~\eqref{in-cdt} below) and the following homogeneous boundary conditions
\begin{equation}\label{bdy-cdt}
        \uu_\ee = 0 \quad 
		\qquad\text{and}\qquad
		\frac{\partial d_\ee }{\partial \nu }= 0
        \qquad 
		\text{on}\quad(0,T)\times \partial \Omega_\ee,
\end{equation}
where $\nu= \nu (x)\in \mathbb R^2$ represents the normal vector at a given point $x\in \partial \Omega_\ee$. 
Details about all components of the equations are postponed to \Cref{sec:main-result}. The parameter $\ee>0$ represents the size of the colloidal particles that we assume uniformly and periodically distributed within a prescribed domain $\Omega$. 

\noindent 
In this article we establish that the family of solutions $(u_\ee, d_\ee)$ with $\ee>0$ exhibit a notable separation in their dynamics, as the scaling parameter $\ee$ tends to zero. Specifically, the solutions converge towards two detached profiles, denoted as $u = u(t,x)$ and $d= d(t,x)$, which satisfy two uncoupled sets of equations (cf.~the Darcy-type system \eqref{eq:Darcy} and  the gradient flow \eqref{eq:d}). This decoupling phenomenon suggests that the influence between the velocity field and the director field diminishes as $\ee$ approaches zero, resulting in distinct and independent behaviors for $u$ and $d$ (cf.~\Cref{sec:main-result} and \Cref{main_thm}). 


\noindent 
In the next two subsections, we first present a brief overview of the mathematical literature regarding LC colloids (cf.~\Cref{sec:overview-static-lc-colloids}), followed by an overview of the results for the homogenisation problem for Newtonian fluids (cf.~\Cref{sec:overview-homog-stokes}). The mathematical challenges related to our main contribution is then briefly described  in \Cref{sec:challenges}. It focuses on addressing the speciality of the interactions between LC fluids (or more broadly, complex fluids) and colloidal particles, an interaction which  introduces a further  layer of analytical complexity when compared with the existing literature. A comprehensive exposition of our main result is then provided in \Cref{sec:main-result}, followed by the proof in Sections \ref{sec:preliminary-results-homogenisation}-\ref{sec:director}.

\subsection{An overview of the analysis of static LC colloids}\label{sec:overview-static-lc-colloids}$\,$

\medskip\noindent 
As elucidated by Lavrentovich  (see Section 2 in \cite{WOS:000374357900011}), colloidal particles have been observed to induce non-trivial distortions in the surrounding liquid crystal when their size is comparable to, or significantly larger than, the de Gennes-Kleman length. This length represents a crucial ratio between the average Frank elastic constants and the surface anchoring strength of the colloid. Specifically, this regime corresponds to an intricate balance between elastic and surface energies, the latter maintaining an appropriate rescaling, as the number of inclusions increases. 
Consequently, two primary areas emerged within the mathematical community, focusing on addressing either the interaction between few particles (typically one or two colloids) or a large array of particles whose sizes tend to infinitesimal values.


\noindent 
Regarding few particles, a significant analytical result was achieved  by Alama, Bronsard, Golovaty and Lamy in \cite{MR4314141}. The authors examined a single spherical colloid particle within a nematic liquid crystal with strong radial anchoring conditions on the colloidal surface. Their findings highlighted how certain LCs adopt two distinct favored setups around a colloidal sphere: either molecules tend to align and create a singular point defect along an axis of symmetry, or they form a circular ring singularity perpendicular to a symmetry axis. 

\noindent 
For what concerns a large array of particles immersed within a LC host, the work by Berlyand, Cioranescu, and Golovaty in \cite{MR2112873} provided some initial analytical insights. Their investigation pertained to a simplified director model for nematic LCs within a spatial domain saturated with colloids. 
They coupled a Ginzburg-Landau-type energy density with appropriate surface energy terms and, by employing a mesocharacteristics method, they uncovered the homogenisation asymptotic of the corresponding Euler-Lagrange equations.

\noindent 
In \cite{WOS:000518700700003} the authors tackled the tensorial Landau-de Gennes model applied to colloids within a dilute regime. 
By focusing on colloids of identical shapes and a uniform spatial distribution, they unveiled 
that an homogenisation process and a strategic selection of anchoring-energy densities allow to arbitrarily tailor the coefficients of the bulk quartic potential of the Landau-de Gennes theory.
More specifically, the coefficients of the bulk quartic potential of the Landau-de Gennes theory can be arbitarly be tailored with precision via the homogenisation process (with an appropriate relation governing the colloidal surface energy).
Subsequently in \cite{WOS:000518220200006}, the same authors extended  their result further, by taking into account polydispersity in colloidal shapes. Their findings demonstrated that the homogenisation process replicates the impact of an electric field on the liquid crystal, through a careful design of colloidal shapes and a dilute regime. 

\subsection{Homogenisation of Stokes and Navier-Stokes equations}\label{sec:overview-homog-stokes}$\,$

\medskip\noindent 
While there has been limited exploration into the mathematical analysis of the hydrodynamics of LC colloids, extensive research has been dedicated to the analysis of Newtonian fluids interacting with a wide array of obstacles, such as the case of fluids in porous media. This topic is particularly pertinent to the homogenisation problem associated with a wide range of partial differential equations, including the Stokes, Navier-Stokes, and Euler equations, among others. In this overview, we briefly highlight some of the key results for the incompressible Stokes and Navier-Stokes equations which relate to our investigation. 

\noindent 
Thus far, mathematical results confirm the physical observations of porous media, particularly highlighting the significance of the distribution, shape, and especially the size of obstacles in the context of the limit process. The size of these obstacles emerges as a key factor, and it has been recognized that a particular range of lengths leads to a kind of trifurcation in the homogenised problem.  
Particularly insightful in these regards were the pioneering works of Allaire in \cite{MR1079190, MR1079189}. Allaire focused on a family of Stokes and Navier-Stokes equations, operating within a stationary regime (with no time dependence on the state variables) and featuring an expanding array of obstacles with Lipschitz shape  and uniform magnitude, periodically distributed in space.  In this context, setting $\ee>0$ as the distance between two adjacent obstacles and $a_\ee>0$ as the characteristic length of a single obstacle, Allaire's works led to the identification of three possible scenarios based on the ratio $\sigma_\ee = (\ee^3/a_\ee)^{1/2}$ in three dimensions or $\sigma_\ee = \ee |\ln(a_\ee /\ee)|^{1/2}$ in two dimensions (Allaire considered also higher dimensions). These scenarios were classified as
\begin{equation*}
    \lim_{\ee \to 0} \sigma_\ee = \sigma \in \mathbb R,\qquad 
    \lim_{\ee \to 0} \sigma_\ee = +\infty  \qquad 
    \lim_{\ee \to 0} \sigma_\ee = 0
\end{equation*}
corresponding to the regimes of critical size, smaller holes and larger holes, respectively. 

\noindent 
In the critical-size regime, Allaire's research led to the conclusion that the interaction between the stationary fluid and the obstacles is robust enough to introduce a new term of Brinkman type into the Stokes or Navier-Stokes equations. The Brinkman term, i.e.~an additional linear zero-order term for the velocity, accounts for the resistance to flow that occurs due to the presence of porous structures. 

\noindent 
In the regime of smaller holes, however, the obstacles diminish more rapidly and thus apply a weaker resistance to the flow. In particular, the limit problem coincides with the original Stokes or Navier-Stokes equations. 

\noindent 
Finally, as more obstacles are introduced in the larger holes regime, the velocity field gradually decreases and the flow is significantly impeded. Through an appropriate scaling approach, the Stokes or Navier-Stokes equations ultimately approach a behavior resembling that of a Darcy-type law. 

\noindent 
For a revisitation of Allaire's results through a Tartar's cell-problem approach, see also \cite{WOS:000566949600003}.

\noindent After Allaire's groundbreaking results, significant efforts were directed towards investigating similar scenarios in which the underlying Navier-Stokes equations also depend on time (the evolutionary Stokes equations being a relatively easier case). The first notable contribution in this context was made by Mikeli\'c in \cite{MR1131849}. In his study, Mikeli\'c made the assumption that the distance and the size of the obstacles or holes rescale with the same magnitude, corresponding to the larger holes regime identified by Allaire. Mikeli\'c's findings demonstrate that, under the conditions of smooth boundaries for the obstacles and non-homogeneous boundary conditions within the larger spatial domain (while still assuming a vanishing velocity field around the obstacles), the homogenised problem converges to the solution of a Darcy-type law. Notably, this convergence does not require any rescaling of the flow in $\ee>0$ (as it was with Allaire), since the boundary conditions in the limit process are non-homogeneous and they provide a non-vanishing limit of the velocity field as $\ee \to 0$. 

\noindent 
In the context of the critical regime within the Navier-Stokes equations, a significant development occurred later with the research conducted by Feireisl and collaborators in \cite{WOS:000368060900016}. Specifically, in the three-dimensional case, where the obstacles scale as $\epsilon^3$, their work demonstrated a result akin to Allaire's earlier findings. In essence, the homogenization process led to the emergence of an evolutionary Navier-Stokes equations with an additional Brinkman-type term. This achievement was accomplished with minimal assumptions regarding the regularity of the obstacles, requiring that the boundaries adhere to a cone-type relation.

\noindent 
The question that naturally arose was whether the scenarios observed by Allaire would also hold true in more complex settings, involving intricate physics like non-uniform distributions of obstacles or polydispersity. Remarkably, recent analytical findings indicate that even within these more intricate scenarios, the limit process continues to yield behaviors resembling Brinkman and Darcy-type models. For instance, in \cite{WOS:000692316800013}, Giunti demonstrated that a random distribution of larger spherical holes still leads to the emergence of Darcy's law in the asymptotic behavior of the velocity field. Additionally, Hillairet, as discussed in \cite{MR3851058}, derived the Brinkman system for the Stokes equations when the obstacles have spherical shapes, even in cases where the boundary conditions on these obstacles are not homogeneous.

\noindent 
While these findings may be deemed satisfactory when dealing with Newtonian fluids, the unique characteristics of LC colloids suggest the possibility of more intricate geometrical structures emerging at increasing colloidal particles. This paper offers valuable insights into this aspect. We still establish a Darcy law  for the velocity field, but we also derive a limiting equation for the alignment of molecules within the colloid, and our results indicate a significant kind of separation between these two state variables.

\noindent 
For further results concerning homogenisation and hydrodynamics we refer also to \cite{WOS:A1996WF15500005,WOS:000349810800007, MR3851058,WOS:000897817400002}.
\subsection{Challenges of homogenisation for complex fluids and our contributions}\label{sec:challenges}$\,$

\medskip\noindent 
Homogenisation for complex fluids of the type we consider, that involve an additional equation to describe the director in the non-Newtonian component, see \eqref{maineq3}, does not seem to have been addressed so far in the mathematical literature. 
Next we mention two difficulties related to the nonlinearity of this equation. 

\noindent 
A first specific challenge is produced by the boundary conditions for the director fields. Indeed, in contrast to the velocity field $u_{\varepsilon}$, a major challenge with examining the limit of $d_{\varepsilon}$ stems from the lack of homogeneous Dirichlet conditions on the surfaces of colloidal particles. While each $u_\varepsilon \in H^1_0(\Omega_\varepsilon)$ has a norm uniformly bounded in $\varepsilon > 0$, and its null extension $\tilde{u}_\varepsilon$ belongs to $H^1_0(\Omega)$ with norm still uniformly bounded, the director $d_\varepsilon$ is not generally zero at the colloidal particle surfaces, therefore it does not satisfy similar properties. This remains true even when replacing the no-anchoring conditions in \eqref{bdy-cdt} with what the physicists call strong or weak anchoring conditions, corresponding mathematically to inhomogeneougs Dirichlet and respectively inhomogeneous Robin boundary conditions. 

\noindent 
One initial approach to tackle this issue might involve introducing suitable extension operators. However, as indicated in \Cref{rmk:pressure}, to leverage the energy estimates in \eqref{energy-eq}, these operators should extend functions beyond $H^1(\Omega_\varepsilon)$, possibly to $W^{1,4}(\Omega_\varepsilon)$ or $H^2(\Omega_\varepsilon)$. Unfortunately, these extension operators come with continuity constants that diverge as $\varepsilon \to 0$, potentially hindering the transfer of energy estimates in \eqref{energy-eq} to the extended functions and making the overall strategy ineffective. 

\noindent In light of these challenges, we adopt a different strategy based on a concept of ``quasi-compactness'' for functions defined in $(0,T)\times \Omega_\varepsilon$ as $\varepsilon \to 0$. Our approach involves an adaptation of the Aubin-Lions lemma, specifically tailored for homogenization problems in periodically perforated domains (cf.~\Cref{sec:Aubing-Lions} with \Cref{lemma:compactness_Omega_eps} and \Cref{thm:Aubin-Lions-Linfty}). 
Essentially, this adapted lemma states that even if a sequence of functions in $(0,T)\times \Omega_\varepsilon$ cannot be extended to $(0,T)\times \Omega$ to meet the original Aubin-Lions criteria, a parallel outcome is achievable within $(0,T)\times \Omega_\varepsilon$. It allows the identification of a weakly convergent subsequence that,
while not strongly convergent, exhibits a key property typically associated with strong convergence: when multiplied to another weakly convergent sequence, it ensures weak convergence of the product towards the product of their respective limits. 
The product and the limit take into account the volume fraction between the unitary cell and colloidal particle.

\noindent 
Our adaptation of the Aubin-Lions lemma is inspired by a parallel result from Allaire and Murat (refer to Lemma 2.3 in \cite{MR1225439}), where they extended the Rellich-Kondrachov theorem to homogenisation contexts for functions in $H^1(\Omega_\varepsilon)$. Our result broadens this scope, accommodating functions that depends on time and are within more general Sobolev spaces.

\smallskip
\noindent 
A further challenge, specific to the coupled system concerns the different levels of regularity, for $u_\varepsilon$ and $d_\varepsilon$ with $d_\varepsilon$ having natural regularity one derivative higher than $u_\varepsilon$ as can be seen also from the energy law (see \eqref{energy-eq}). This has two implications: on one hand uniform estimates at the higher regularity are harder to get (also taking into account the issue with the boundary conditions previously mentioned) and also that this creates terms in the stress tensor that  are virtually uncontrollable by energy estimates. Indeed, after suitably absorbing some terms in the pressure gradient, 
one has only a control of the forcing term in the $L^1$-norm in time and space 
provided by $d$ in Stokes and this will require us to deal with an extension of the pressure only in $L^p$, with $p\in [1,2)$ (see Remark~\ref{rmk:pressure}).  

\subsection*{Structure of the paper} The forthcoming sections are organized as follows: in \Cref{sec:main-result} we articulate the technical aspects of our findings, while \Cref{sec:preliminary-results-homogenisation} revisits some preliminary tools commonly utilised in homogenisation problems. In \Cref{sec:Aubing-Lions}, we present our adaptation of the Aubin-Lions Lemma, crucial for addressing the director equation \eqref{maineq3}. \Cref{sec:pressure} initiates the analysis of the asymptotic behavior of the Stokes equation \eqref{maineq1}-\eqref{maineq2} for the velocity field, focusing on an appropriate extension of the pressure. \Cref{sec:Darcy} then delves into the Darcy law pertaining to the velocity's limit. Finally, \Cref{sec:director} explores the asymptotic limit of the director equation, frequently applying our adaptation of the Aubin-Lions lemma.

\section{Main result}\label{sec:main-result}
\noindent We first introduce the state variables that characterise our system and govern the behavior of the LC colloid. We set furthermore the domain and framework within which these quantities are defined. 

\noindent 
In this study, we mainly focus on two-dimensional LC colloids and, following a standard approach in homogenisation problems, we define a unitary configuration cell denoted as $Y:= [-1/2,1/2]^2 \subset \mathbb{R}^2$. Additionally,  we consider a closed connected domain $\mathcal T $, representing a region in space that configures the (unitary) geometry of a single colloidal particle within the liquid crystal system. 
$\mathcal T $ is a subset of $\mathring{Y} = ]-1/2, 1/2[^2$, with boundary $\partial \mathcal T$ of class $\mathcal{C}^\infty$. 
We set  $Y^* := Y\setminus \mathcal{T}$ as  the LC part of the colloidal system,  representing the region of the liquid-crystal host within the unitary cell $Y$. We assume that also $Y^*$ is connected.

\noindent 
The hosting LC is confined within a bounded domain $\Omega_\ee\subset \mathbb R^2$, depending on a positive parameter $\ee>0$. The given domain $ \Omega_\ee$ is characterized as the intersection of a larger domain $\Omega$ with boundary $\partial \Omega$ of class $C^\infty$ and a collection of smaller cells $Y_k^{*,\ee}$,  which are scaled-down versions of $Y^*$ using the  parameter $\varepsilon>0$. Specifically, for any $k \in \mathbb Z^2$, the following definitions apply:
\begin{equation*}
    x_k^\ee := \ee k,\qquad 
    Y_k^{*,\ee} := x_k^\ee + \ee Y^*,\qquad 
    \mathcal{T}_k^{\ee} := x_k^\ee + \ee \mathcal{T},\qquad 
    \Omega_\ee :=
    \Omega
    \setminus
    \bigcup\limits_{k\in \mathcal K}  
    \overline{\mathcal{T}_k^{\ee}},
\end{equation*}
where  $\mathcal K = \mathcal K _\ee \subset \mathbb Z^2$ is the set of any index $k\in \mathbb Z^2$ for which the cell $ Y_k^{*,\ee}$ is all included in the domain $\Omega$.  In other words, we selectively ``perforate'' only those cells that lie completely within $\Omega$, ensuring that no colloidal particle intersects the boundary $\partial \Omega$.

\noindent
The parameter $\ee>0$ serves as a scaling factor, allowing  the study of colloidal particles with small size. We are indeed interested in the asymptotics as $\ee\to 0$.


\smallskip
\noindent
The state variable describing the privileged molecular orientation of the liquid crystal is a two-dimensional vector field $d_\ee=d_\ee(t,x)\in \mathbb R^2$ (known as director), depending on time $t\in (0, T)$ (for a general lifespan $T>0$) and spatial variables $x\in \Omega_\ee$. Furthermore, we denote by $u_\ee = u_\ee(t,x) \in \mathbb{R}^2$ the velocity field of the liquid crystal. 

\smallskip
\noindent 
 In addition to the system \eqref{maineq1}--\eqref{maineq3} and \eqref{bdy-cdt} above, we specify the initial data for the director field $d_\ee : (0,T)\times \Omega \to  \mathbb R^2$ as
\begin{equation}\label{in-cdt}
    d_{\ee|t=0} = d_{\ee, \rm in} \in L^\infty(\Omega_\ee)\cap H^1(\Omega_\ee)
    \quad \text{with}\quad
    \|d_{\ee, in}\|_{L^\infty(\Omega_\ee)}\leq 1
\end{equation}
and we set two types of forcing terms, denoted as $F_\ee ,\, H_\ee :(0,T)\times \Omega \to \mathbb R^2$. These vector functions are defined across the whole time-space domain $(t,x)\in (0,T)\times \Omega$
and belong to
\begin{equation}\label{forcing-term-F}
    F_\ee \in L^2( (0,T) \times \Omega ) ,\qquad 
    H_\ee \in L^2(0,T;H^1_0(\Omega)) ,\quad 
    \text{with }H_\ee \equiv 0 \text{ in }\Omega\setminus \Omega_\ee.
\end{equation}
The last relation, in particular, implies also $H_\ee \in L^2(0,T, H^1_0(\Omega_\ee))$.
For a fixed value of the scaling parameter $\ee>0$, System \eqref{maineq1}-\eqref{in-cdt} admits a unique global-in-time Leray-type solution $(u_\ee,d_\ee)$. These solutions can be constructed using a standard argument based on a Galerkin method (cf.~for instance \cite{MR1329830}, where the Stokes equations are replaced by their Navier-Stokes counterpart). Leray-type solutions of \eqref{maineq1}-\eqref{in-cdt} are distributional solutions that belong to the function space 
\begin{equation}\label{fct-space:weak-solutions}
        u_\ee \in 
        L^2(0,T; H^1_0(\Omega_\ee)),\quad
        d_\ee \in L^\infty(0,T;H^1(\Omega_\ee))
        \cap 
        L^2(0,T; H^2(\Omega_\ee))
\end{equation}
and satisfy the energy inequality
\begin{equation}\label{energy-eq}
\begin{aligned}
  \int_{\Omega_\ee}
  \left( 
    \frac{|\nabla d_\ee|^2}{2}
    + 
    \frac{(|d_\ee|^2-1)^2}{4}
  \right)
  (t,x)
  d x
  +
  \int_0^t
  \!\!\!
  \int_{\Omega_\ee}  
  \left( 
  |\nabla \uu_\ee|^2 
  +
  \big| 
    \Delta d_\ee - (|d_\ee|^2-1)d_\ee
  \big|^2
  \right)
  (s,x)
  d xd s
  \\
  \leq 
  \int_{\Omega_\ee}
    \left( 
        \frac{|\nabla d_{\ee, \rm in}|^2}{2}
        +
        \frac{(|d_{\ee, \rm in}|^2-1)^2}{4}
    \right)(x)d x
    +
    \int_0^t \int_{\Omega_\ee} 
    \left(
    F_\ee\cdot  u_\ee
    +
    \nabla H_\ee: \nabla u_\ee 
    \right)(s,x)
    d x d s.
\end{aligned}
\end{equation}
 at any time $t\in (0,T)$. Moreover the pressure $\pre_\ee\in L^2(0,T;\Lbar^2_0(\Omega_\ee))$ of \eqref{maineq1} is implicitly defined by the incompressibility condition \eqref{maineq2} (cf.~also \Cref{rmk:pressure}), where $\Lbar^2_0(\Omega_\ee)$ denotes the closed subset of all square-integrable functions with null average: for any bounded Lebesgue-measurable subset $\Sigma\subseteq \mathbb R^d$ and any $p\in [1,\infty]$, we set
 \begin{equation*}
     \Lbar^p_0(\Sigma) := \bigg\{ f \in L^p(\Sigma)\quad \text{such that}\quad \int_{\Sigma} f(x) dx = 0\bigg\}.
 \end{equation*} 
 Our primary objective is to investigate and determine the asymptotic behavior of the solutions $(u_\ee, d_\ee)$ and the pressure $\pre_\ee$ (or rather some appropriate extension of $\nabla \pre_\ee$) as the scaling parameter $\ee\to 0$, namely when the characteristic length scale of the colloidal particles tend towards zero.

\smallskip
\noindent 
To state our result in details, we shall first introduce some notation and standard tools of homogenisation, that we will repeatedly employ throughout this paper. Given a general function $f:\Omega_\ee \to \mathbb R$, we denote by $\tilde{f}$ the unique extension $f:\Omega\to \mathbb R$, which is identically null $\tilde{f}(x) = 0$ for all $x \in \Omega \setminus \Omega_\ee$. Additionally, we define the following matrices $\mathbb{A} \in \mathbb{R}^{2\times2}$ and $\mathbb{B} \in \mathbb{R}^{2\times2}$, with components
\begin{equation}\label{eq:matrices-AB}
   \mathbb A_{ij}
   :=
   \frac{1}{|Y|}
   \int_{Y^*} \nabla  \psi_{i}(y) \cdot  \nabla  \psi_{j}(y)  dy
   ,
   \qquad
   \mathbb B_{ij}
   :=
   \frac{1}{|Y|}
   \int_{Y^*} 
   \nabla_y \omega^{i}(y) 
   :
   \nabla_y \omega^{j}(y) 
   dy,\qquad i,j = 1,2.
\end{equation}
For $i\in \{1,2\}$, the above scalar function $ \psi_i\in H^1(Y^*)$ and 2D-vector function $\omega^i \in H^1(Y^*)$ are solutions of the following auxiliary problems associated to the reference cell $Y^*$:
\begin{equation*}
    \left\{
        \begin{alignedat}{4}
		&- \Delta_y \psi_i(y) = 0\qquad 
            &&
            y \text{ in }Y^*,\\
		&\mu(y) \cdot \nabla_y \psi_i(y) = 0
            \qquad 
            &&
            y\text{ on }\partial \mathcal{T},
            \\
           & 
           ( \tilde \psi_i - y_i) 
           \text{ is $Y$-periodic and average free},
           \hspace{-1.5cm}
           &&
	 \end{alignedat}
    \right.
    \qquad 
    \left\{
        \begin{aligned}
	  &\begin{alignedat}{4}
		&- \Delta_y \omega^i + \nabla_y \pi^i = e^i\qquad 
            &&
            \text{in }Y^*,\\
		&\Div_y\, \omega^i = 0
            \qquad 
            &&
            \text{in }Y^*,
            \\
            &
            \omega^i = 0
            &&
            \text{on }\partial \mathcal{T},\\
            & 
           \tilde \omega^i
           \text{ is $Y$-periodic and average free.}
            \hspace{-1cm}
           &&
  \end{alignedat}
      \end{aligned}
      \right.
\end{equation*}
Here, $\mu = \mu(y)$ denotes the inward normal vector to the unitary particle $\mathcal{T}$, while $\{e^1, e^2\}$ is the canonical basis of $\mathbb R^2$. For the matrix $\mathbb A$, we refer to Allaire and Murat \cite{MR1225439} (where $\mathbb A$ is denoted as $\tilde A$  in (1.5)-(1.6)). Likewise, see Mikeli\'c \cite{MR1131849} for the permeability tensor $\mathbb B$ (expressed with its equivalent formulation $K_{ij} = \int_{Y^*}e^j\cdot \omega^i dy$ in (4.9)-(4.11)). 
From the assumption that $Y^*= Y\setminus \mathcal T$ is connected and has a smooth boundary, both matrices $\mathbb A$ and $\mathbb B$ are symmetric and positive definite. (For $\mathbb{A}$, see \cite{MR1225439}, and for $\mathbb{B}$, we refer to \cite[Chapter 7, Proposition~2.2]{Sanchez-Palencia:1391389}).

\noindent
Our main result states that as $\ee\to 0$, the Leray solutions $(u_\ee, d_\ee)$ of \eqref{maineq1} - \eqref{in-cdt} weakly converge to a profile $(u, d)$, which is solution of the following separated equations: 

\begin{minipage}{0.4\linewidth}
\begin{equation}\label{eq:Darcy}
    \begin{aligned}
    \left\{
    \begin{alignedat}{8}
        &  u + \mathbb B\nabla P = G \quad         
        &&(0,T)\times \Omega,\\
        & \Div\, u = 0 
        &&(0,T)\times  \Omega,\\
        &u \cdot \nu = 0
        &&(0,T)\times \partial \Omega,\\
    \end{alignedat}
    \right.
\end{aligned}
\end{equation}
\end{minipage}
\begin{minipage}{0.55\linewidth}
\begin{equation}\label{eq:d}
\begin{aligned}
    \left\{
    \begin{alignedat}{8}
        &\partial_t d  - 
            \Div
            \Big(
                \frac{1}{\theta} \mathbb{A} \nabla d
            \Big) = - (|d|^2-1)d \quad  
        &&(0,T)\times \Omega,\\
        &\nu \cdot \mathbb{A}\nabla d = 0
        &&(0,T)\times \partial \Omega,\\
        &d_{|t = 0} = d_{\rm in}
        &&\hspace{1.3cm} \Omega.\\
    \end{alignedat}
    \right.
\end{aligned}
\end{equation}
\end{minipage}

\smallskip
\noindent 
with $\theta = |Y^*|/|Y|$. The first equation embodies a Darcy-type model for the velocity field, while the second equation describes a gradient-flow for the director field. The two-dimensional vector function $G \in L^2((0,T)\times \Omega)$ depends on the limits of the forcing terms $F_\ee$ and $H_\ee$, while $d_{\rm in} \in H^1(\Omega)\cap L^\infty(\Omega)$ depends on the limit of the initial data $d_{\rm in, \ee}$ (cf.~\Cref{main_thm} for more details). 
Both systems \eqref{eq:Darcy} and \eqref{eq:d} possess unique weak solutions
\begin{equation}\label{function-spaces-for-u-and-d-at-the-limit}
    u \in L^2((0,T)\times \Omega)
    \quad \text{and}\quad 
    d \in L^\infty((0,T) \times \Omega)\cap 
    L^\infty(0,T; H^1(\Omega)) \cap L^2(0,T; H^2(\Omega)).
\end{equation}
A solution of \eqref{eq:Darcy} is due to the Lax-Milgram theorem, which ensures the existence and uniqueness of a pressure $P$ in $L^2(0,T;\lbar^2_0(\Omega)\cap H^1(\Omega))$ such that
\begin{equation*}
      \int_0^T \int_\Omega \mathbb B \nabla P \cdot \nabla \phi \, dx = \int_0^T \int_\Omega G \cdot \nabla \phi \, dx,
\end{equation*}
for any $\phi \in L^2(0,T;H^1(\Omega))$. Therefore, $u = G-\mathbb B \nabla P$ is the unique solution of \eqref{eq:Darcy}. Similarly, a unique weak solution $d$ of \eqref{eq:d} can be obtained through a standard Galerkin approach, coupled with the a-priori estimate
\begin{equation*}
    \frac{1}{2} \int_{\Omega}  \mathbb A\nabla d(t,x):\nabla d(t,x)dx  +
    \int_0^t\int_{\Omega} 
    \big|\Delta d(s,x) - (d(s,x)^2-1)d(s,x) \big|^2 dxds
    \leq 
     \frac{1}{2} 
     \int_{\Omega}  \mathbb A\nabla d_{\rm in }(x):\nabla d_{\rm in}(x)dx,    
\end{equation*}
at any $t\in (0,T)$ and the weak maximum principle of System \eqref{eq:d}. Our primary focus lies therefore in examining the weak convergence of the extensions $\tilde u_\ee/\ee$ and $\tilde d_\ee$ towards $u$ and $d$, respectively. Our main result reads as follows:
\begin{theorem}\label{main_thm}
Consider a family of initial data $d_{\ee, \rm in}$ as in \eqref{in-cdt} with $H^1(\Omega_\ee)$-norms uniformly bounded in $\ee>0$. Assume that the extensions $\tilde d_{\ee, \rm in}$ weakly-$\ast$ converges to a vector function $\theta\,d_{\rm in}$ in $ L^\infty(\Omega)$, with $\theta = |Y^*|/|Y|$. Assume that $H_\ee$ are uniformly bounded in $L^2(0,T;H^1_0(\Omega))$ and the following convergences hold true: 
\begin{equation*}
   \ee F_\ee \to F\quad  \text{strongly in }L^2((0,T)\times \Omega)\quad 
   \text{and} \quad \frac{H_{\ee}}{\ee}\rightharpoonup H\quad \text{weakly in } L^2((0,T)\times \Omega).
\end{equation*}
Define the two-dimensional vector field $G \in L^2((0,T)\times \Omega)$ with components
\begin{equation*}
    G_i = \bigg(\frac{1}{|Y|} \int_{Y^*}\omega^i(y)dy \bigg)\cdot F + H_i,\quad 
    \text{with }i = 1,2.
\end{equation*}
Then the following weak-convergence results hold true:
\begin{enumerate}[(i)]
    \item\label{part-i-main-thm} The family of functions $\tilde \uu_\ee$  converges strongly to zero in $L^2((0,T) \times \Omega)$, as $\ee \to 0$, while $\tilde u_\ee/\ee$ weakly converges in $L^2((0,T) \times \Omega)$
    towards the unique solution $u$ of \eqref{eq:Darcy}.
    \item There exists a family of pressures $P_\ee:(0,T)\times \Omega\to \mathbb R$  which are in $L^2(0,T;\lbar^p_0(\Omega))$, for any $p\in [1,2)$,  and satisfy $P_{\ee}(t,x) = \pre_\ee(t,x) + \alpha_\ee $   a.e.~$(t,x)\in (0,T)\times \Omega_\ee$, for constants $\alpha_\ee\in \mathbb R$. 
    Additionally, $\ee P_\ee$ are uniformly bounded in $L^2(0,T;\lbar^p_0(\Omega))$, for any $p\in [1,2)$, and they
    weakly converges to the pressure $P$ of \eqref{eq:Darcy}, as $\ee \to 0$.
    \item $\tilde d_\ee$ weakly-$\ast$ converges in $L^\infty(\mathbb (0,T)\times  \Omega)$ to the unique weak solution $d$ of \eqref{eq:d}.
\end{enumerate}
\end{theorem}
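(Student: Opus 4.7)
The plan is to proceed in three stages: uniform a priori bounds; analysis of the Stokes/pressure subsystem via a Tartar--Allaire-type restriction operator and the cell problem for $\omega^i$; and passage to the limit in the director equation through the adapted Aubin--Lions lemma of \Cref{sec:Aubing-Lions} and the cell problem for $\psi_i$.

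I would first start from the energy inequality \eqref{energy-eq} combined with the perforated-domain Poincar\'e inequality $\|\tilde u_\ee\|_{L^2(\Omega)}\leq C\ee\,\|\nabla u_\ee\|_{L^2(\Omega_\ee)}$: bounding $\int F_\ee\cdot u_\ee$ by $\|\ee F_\ee\|_{L^2}\|\nabla u_\ee\|_{L^2}$ (and similarly for the $H_\ee$-term), the hypotheses $\ee F_\ee\to F$ and $H_\ee/\ee\rightharpoonup H$ in $L^2$ permit absorbing the forcing into a fraction of the viscous dissipation. This yields uniform bounds for $\nabla u_\ee$ in $L^2((0,T)\times\Omega_\ee)$ and for $d_\ee$ in $L^\infty(0,T;H^1(\Omega_\ee))\cap L^2(0,T;H^2(\Omega_\ee))$, and a maximum-principle argument applied to $|d_\ee|^2$ delivers $\|d_\ee\|_{L^\infty}\leq 1$. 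Consequently $\tilde u_\ee\to 0$ strongly in $L^2((0,T)\times\Omega)$, while $\tilde u_\ee/\ee$ is weakly compact and $\tilde d_\ee$ is bounded in $L^\infty$.

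For parts (i) and (ii), I would employ the Tartar--Allaire-type restriction operator $R_\ee:H^1_0(\Omega)^2\to H^1_0(\Omega_\ee)^2$ (see \Cref{sec:pressure}), which preserves the divergence-free structure and satisfies $\|R_\ee\varphi\|_{H^1}\leq C(\ee^{-1}\|\varphi\|_{L^2}+\|\nabla\varphi\|_{L^2})$; its adjoint extends $\nabla\pre_\ee$ to $\nabla P_\ee$ on $\Omega$ and determines $P_\ee$ up to an additive constant $\alpha_\ee$. Because the elastic stress $(\nabla d_\ee)^T\Delta d_\ee$ is only uniformly $L^1$ in space, the natural target space is $L^2(0,T;\lbar^p_0(\Omega))$ with $p\in[1,2)$, and the $\ee^{-1}$ factor in the norm of $R_\ee$ produces exactly the $\ee P_\ee$ normalisation. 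To identify the Darcy limit (\Cref{sec:Darcy}), I would test the weak Stokes formulation against $\ee\,\omega^i(x/\ee)\,\varphi(x)$ for smooth $\varphi$; a Tartar-style oscillating-test-function argument together with the cell problem for $\omega^i$ yields $\tilde u_\ee/\ee\rightharpoonup u$ with $u+\mathbb B\nabla P=G$. The elastic stress, being tested against a function of size $\ee$, vanishes in the limit, which is the algebraic source of the decoupling visible in \eqref{eq:Darcy}.

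The hard step is (iii). The no-anchoring condition $\partial_\nu d_\ee=0$ on $\partial\mathcal T^\ee_k$ blocks the zero-extension of $d_\ee$, and as emphasised in \Cref{sec:challenges} no $H^2$- or $W^{1,4}$-extension is available with $\ee$-uniform continuity, so the classical Aubin--Lions compactness cannot be applied to $\tilde d_\ee$. I would replace it by the quasi-compactness variant \Cref{thm:Aubin-Lions-Linfty}: along a subsequence, $\tilde d_\ee\rightharpoonupstar\theta d$ in $L^\infty((0,T)\times\Omega)$, and products of $\tilde d_\ee$ with any weakly-convergent sequence in admissible spaces pass to the product of the limits. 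This ingredient is precisely what sends the cubic nonlinearity $(|d_\ee|^2-1)d_\ee$ (extended by zero) to $\theta(|d|^2-1)d$ and what kills the transport term: rewriting $u_\ee\cdot\nabla d_\ee=\Div(u_\ee d_\ee)$ and integrating against a test function reduces the term to $\int \tilde d_\ee\,\tilde u_\ee\cdot\nabla\varphi$, which vanishes thanks to the $L^\infty$-bound on $\tilde d_\ee$ and the strong $L^2$ vanishing of $\tilde u_\ee$. The limiting elliptic operator is identified by testing \eqref{maineq3} against $\psi_i(x/\ee)\,\varphi(x)$ and invoking the cell problem for $\psi_i$ \`a la Allaire--Murat \cite{MR1225439}; the volume fraction $|Y^*|/|Y|=\theta$ generates the prefactor $1/\theta$ in \eqref{eq:d}. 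Uniqueness of weak solutions for both \eqref{eq:Darcy} and \eqref{eq:d}, established in the passage preceding the theorem, finally promotes the subsequential convergence to that of the full family.
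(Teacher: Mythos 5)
Your proposal follows essentially the same route as the paper: uniform bounds from the energy law \eqref{energy-eq} combined with the $\ee$-scaled Poincar\'e inequality, the restriction-operator/Bogovskii duality construction of the pressure extension in $L^2(0,T;\lbar^p_0(\Omega))$ with $p\in[1,2)$, Tartar-type oscillating test functions $\ee\,\omega^i(x/\ee)\varphi$ for the Darcy limit, and the adapted Aubin--Lions lemma together with the Neumann cell correctors for the director equation, with uniqueness of the limit problems upgrading subsequential to full convergence. The only harmless deviations are your integration-by-parts treatment of the transport term (the paper instead bounds it directly using \Cref{Poincare_ineq} and the energy estimates) and the scaling of the director test function, which should be $\ee\,\psi_i(x/\ee)=x\cdot e^i+\ee\,\tilde\chi_i(x/\ee)$ rather than $\psi_i(x/\ee)$.
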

\noindent 
Some remarks about this statement and the overall result are in order.
\begin{remark}\label{rmk:d-in-is-also-in-H1}
    Given that the $H^1(\Omega_\ee)$-norm of the initial data $d_{\ee, \rm in}$ is uniformly bounded for all $\ee>0$, we can apply Lemma 2.3 from \cite{MR1225439}. From this, the weak-$\ast$ convergence $\tilde d_{\ee, \rm in} \rightharpoonupstar d_{\rm in}$ in $L^\infty(\Omega)$ leads to the fact that also $d_{\rm in}$ is in $H^1(\Omega)$. Therefore, System \eqref{eq:d} possesses a unique weak solution $d$, as characterized in \eqref{function-spaces-for-u-and-d-at-the-limit}.
\end{remark}

\begin{remark}\label{rmk:uniform-estimates-norms-solutinos}
    Given the assumptions on $d_{\ee,\rm in}$, $F_\ee$ and $H_\ee$ in Theorem \ref{main_thm}, and taking into account the energy inequality from \eqref{energy-eq}, we emphasize the existence of a  constant $\EE_0 >0$, independent of $\ee > 0$, such that 
    \begin{equation*}
    \begin{aligned}
        \| \nabla u_\ee \|_{L^2((0,T)\times \Omega_\ee)}  
        +
        \|
            \nabla
            d_\ee    
        \|_{L^\infty(0,T;L^2(\Omega_\ee))}
        &+
        \|
            \Delta
            d_\ee    
        \|_{L^2((0,T)\times \Omega_\ee)} +
        \\
        &+
        \ee 
        \| F_\ee \|_{L^2((0,T)\times \Omega)}
        +
        \|
            \nabla
            H_\ee    
        \|_{L^2((0,T)\times \Omega)}
        \leq 
        \EE_0.
    \end{aligned}
    \end{equation*}
    We will use repeatedly  this estimate throghout our analysis.  Moreover, the equation \eqref{maineq3} adheres to a weak maximum principle. Coupled with \eqref{in-cdt}, this guarantees  $\|d_\ee \|_{L^\infty((0,T)\times \Omega_\ee)}\leq 1 $, for any $\ee>0$.
\end{remark}

\begin{remark}\label{rmk:pressure}
    For a fixed $\ee>0$, a more popular form of the equation \eqref{maineq1} is rooted on a variation of the energy \eqref{energy-eq}, which recasts the Stokes equation into
    \begin{equation}\label{eq:eq-u-with-rhs-in-h-1}
        -\Delta  u_\ee + \nabla \tilde \pre_\ee = - \Div(\nabla d_\ee \odot \nabla d_\ee )+  F_\ee + \Delta H_\ee
        \quad (0,T) \times \Omega_\ee.
    \end{equation}
    Using a conventional Galerkin method, indeed, \eqref{eq:eq-u-with-rhs-in-h-1} together with \eqref{maineq2}-\eqref{maineq3} admits a unique weak solution $(u_\ee,d_\ee)$ as in \eqref{fct-space:weak-solutions} satisfying the energy inequality \eqref{energy-eq}. Morever, given that $d_\ee\in L^2(0,T;H^2(\Omega_\ee))\hookrightarrow  L^2(0,T;W^{1,4}(\Omega_\ee))$, the right-hand side~of  \eqref{eq:eq-u-with-rhs-in-h-1} is in $L^2(0,T;H^{-1}(\Omega_\ee))$, a fact that allows to select a pressure $\tilde \pre_\ee$ in  $L^2(0,T;  \lbar^2_0(\Omega_\ee))$. 

    \noindent
    Additionally, $(u_\ee,d_\ee)$ is also solution of System \eqref{maineq1}-\eqref{maineq2}, where the pressure $\pre_\ee$ is given by 
    \begin{equation}\label{pressure-2}
         \pre_\ee(t,x)  =\tilde\pre_\ee(t,x) + \frac{|\nabla d_\ee(t,x)|^2}{2} -
        \fint_{\Omega_\ee} \frac{|\nabla d_\ee(t,y)|^2}{2}dy,\quad \text{a.e}\quad (t,x)\in (0,T)\times \Omega_\ee.
    \end{equation} 
    This implies in particular that also the pressure $\pre_\ee$ in \eqref{maineq1} belongs to 
    $ L^2(0,T;  \lbar^2_0(\Omega_\ee))$. 
    Our primary concern, however, is not the regularity of $\pre_\ee$ (or $\tilde \pre_\ee$) for a fixed $\ee>0$, but about obtaining some uniform estimates as $\ee\to 0$. Equation \eqref{maineq1} is particularly advantageous in this regard: the forcing term $(\nabla d_\ee)^T\Delta d_\ee$ can be uniformly bounded in $L^2(0,T; L^1(\Omega_\ee))$, stemming directly from the energy estimate \eqref{energy-eq}. In \Cref{sec:pressure}, we will transfer these estimates to a suitable extension of the pressure in $L^2(0,T;\lbar^p_0(\Omega))$, with $p \in [1,2)$. However, achieving uniform bounds for  $\Div(\nabla d_\ee \odot \nabla d_\ee )$ in $L^2(0,T;H^{-1}(\Omega_\ee))$ would require an explicit expression of the constant of embedding $H^2(\Omega_\ee)\hookrightarrow W^{1,4}(\Omega_\ee)$, contingent on $\ee>0$. This becomes increasingly challenging as the domain $\Omega_\ee$ is punctuated by a rising number of obstacles. Consequently, despite the equivalence of \eqref{eq:eq-u-with-rhs-in-h-1} and \eqref{maineq1}, the estimates associated with \eqref{maineq1} are better suited for our objective.
\end{remark}

\begin{remark}
In the analysis of the momentum equation (\ref{maineq1}), our focus is on two distinct types of forcing terms: $F_\ee$ and $\Delta H_\ee$. Both of these terms play a crucial role in the asymptotic behavior as $\ee \to 0$ in the context of the Darcy equation (\ref{eq:Darcy}). However, their influence in this limit is governed by different assumptions. Specifically, $F_\ee$ requires rescaling by $1/\ee$ in its norm to maintain significance in the limit, as failing to do so would lead to its diminishing to zero. In other words, this scaling is necessary to prevent the colloids from overpowering the forcing term. In contrast, although $\Delta H_\ee$ is less physical, $H_\ee$ simply needs to remain uniformly bounded in the space $L^2(0,T;H^1_0(\Omega_\ee))$. This requirement aligns with the behavior of $\Delta H_\ee$ and $\Delta u_\ee$, which exhibit similar characteristics, thus suggesting a parallel between $u_\ee/\ee $ and $H_\ee/\ee$.

\noindent 
Additionally, it would be plausible to consider a third type of forcing term, represented as $\Div\, \mathbb T_\ee$, with $2\times 2$ tensor function  $ \mathbb T_\ee$ uniformly bounded in $L^2((0,T)\times \Omega)$. This term would weakly converge to a tensor function $\mathbb T$ (up to a subsequence). Nevertheless, the corresponding term in Equation \eqref{eq:Darcy} would eventually manifest a linear dependency on $\mathbb T$, as well as on the tensor $\int_{Y^*}\nabla_y \omega^i(y)dy$. Given the null Dirichlet conditions of $\omega^i$,$\int_{Y^*}\nabla_y \omega^i(y)dy$ is indetically null. 
Therefore, a forcing term in the form of $\Div\, \mathbb T_\ee$ would vanish in the asymptotic limit $\ee \to 0$. To maintain brevity and focus, we henceforth exclude this term from further discussion.
\end{remark}

\section{A preliminary toolbox of homogenisation}\label{sec:preliminary-results-homogenisation}

\noindent 
Before delving into the core elements of the proof for \Cref{main_thm}, we first revisit some foundational tools commonly used in problems associated with homogenisation. One pivotal tool is the concept of a restriction operator from Sobolev spaces $W^{1,q}(\Omega)$ onto $W^{1,q}(\Omega_\ee)$, with $q\in (1,\infty)$, which was elaborated upon by Mikeli\'{c} (cf.~Lemma 2.2 of \cite{MR1131849} and Lemma 3.2 of \cite{MR1434319}). Notably, this concept broadens the restriction operators initially presented by Tartar \cite{1571980074146392576} and Allaire \cite{MR1020348} for the special case $q=2$.
\begin{lemma}\label{lemma:restriction_operator}
        For any $q\in (1,\infty)$, there exists an operator $\mathcal R_{q,\ee}\in \mycal{L}(W^{1,q}_0(\Omega, \mathbb R^2),W^{1,q}_0(\Omega_\ee, \mathbb R^2))$, such that
	\begin{itemize}
		\item 	for any $w \in W^{1,q}_0(\Omega, \mathbb R^2)$, $\mathcal{R}_{q,\ee} w \equiv 0$ in $\Omega\setminus \Omega_\ee$, 
		\item 	for any $w \in W^{1,q}_0(\Omega, \mathbb R^2)$ with $w\equiv 0$ in $\Omega\setminus \Omega_\ee$, $\mathcal{R}_{q,\ee} w = w$, 
		\item 	$\Div (\mathcal{R}_{q,\ee} w) = 0$, for any $w \in W^{1,q}_0(\Omega,  \mathbb R^2)$ with $\Div\,w = 0$,
		\item 	the following inequalities are satisfied
		\begin{equation*}
		\begin{aligned}
		  \| R_{q,\ee} w \|_{L^q(\Omega_\ee)}&\leq 
            C_{q,\Omega}\Big( \| w \|_{L^q(\Omega)} + \ee \| \nabla w \|_{L^q(\Omega)}\Big) ,\\
			\| \nabla R_{q,\ee} w \|_{L^q(\Omega_\ee)}&\leq C_{q,\Omega}
            \Big( \frac{1}{\ee} \| w \|_{L^q(\Omega)} + \| \nabla w \|_{L^q(\Omega)}\Big),
		\end{aligned}	
		\end{equation*}
		for a suitable constant $C_{q,\Omega}>0$ that depends only on the domain $\Omega$ and $q\in (1,\infty)$.
	\end{itemize}
\end{lemma}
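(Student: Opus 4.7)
The strategy is the classical Tartar--Allaire--Mikeli\'c approach: construct $\mathcal{R}_{q,\ee}$ cell by cell from a single reference operator on the unit cell $Y$, then glue and scale. The role of the reference operator is to replace a vector field defined on $Y$ by one which vanishes on $\partial\mathcal{T}$ without altering its trace on $\partial Y$ nor its divergence.

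I would first select an intermediate smooth set $\mathcal{T}\subset\subset\mathcal{T}_1\subset\subset Y$ and invoke the classical Bogovskii operator $\mathcal{B}\in\mathcal{L}(L^q_0(\mathcal{T}_1\setminus\mathcal{T});W^{1,q}_0(\mathcal{T}_1\setminus\mathcal{T},\mathbb{R}^2))$, which satisfies $\Div\mathcal{B}g=g$ together with the estimate $\|\mathcal{B}g\|_{W^{1,q}}\leq C_q\|g\|_{L^q}$ on the Lipschitz corona. Given a cut-off $\eta\in C^\infty(\overline Y)$ equal to $1$ near $\partial\mathcal{T}_1$ and to $0$ near $\partial\mathcal{T}$, I would define for $\varphi\in W^{1,q}(Y,\mathbb{R}^2)$ the reference operator $R_Y\varphi$ piecewise: $R_Y\varphi=\varphi$ on $Y\setminus\mathcal{T}_1$, $R_Y\varphi=0$ on $\mathcal{T}$, and on the corona $\mathcal{T}_1\setminus\mathcal{T}$
\[
  R_Y\varphi := \eta\,\varphi + \mathcal{B}\bigl[(1-\eta)\Div\varphi - \nabla\eta\cdot\varphi - c_\varphi\bigr],
\]
with $c_\varphi$ the constant enforcing Bogovskii compatibility. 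A direct computation gives $\Div R_Y\varphi=\Div\varphi-c_\varphi\chi_{\mathcal{T}_1\setminus\mathcal{T}}$, and by Gauss $c_\varphi=0$ whenever $\Div\varphi=0$ in $Y$, which yields property~(iii) at the cell level. In the complementary situation $\varphi|_{\mathcal{T}}=0$, the argument of $\mathcal{B}$ equals $\Div((1-\eta)\varphi)$ with $(1-\eta)\varphi\in W^{1,q}_0$, so a suitable normalisation of $\mathcal{B}$ (for instance, the Bogovskii--Galdi integral formula) returns precisely $(1-\eta)\varphi$, producing the cell analogue of property~(ii).

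Next, I would transport $R_Y$ to each scaled cell $Y^\ee_k$, $k\in\mathcal K$, via the affine change of variables $y=(x-\ee k)/\ee$, yielding $R_{k,\ee}$, and set
\[
  (\mathcal{R}_{q,\ee}w)(x) := \begin{cases}(R_{k,\ee}w)(x), & x\in Y^{*,\ee}_k,\ k\in\mathcal K,\\ w(x), & x\in\Omega_\ee\setminus\bigcup_{k\in\mathcal K}Y^{*,\ee}_k,\end{cases}
\]
extended by $0$ on each $\mathcal{T}_k^\ee$. Since $R_Y\varphi=\varphi$ near $\partial Y$, the scaled pieces match $w$ across every $\partial Y^\ee_k$, and the global object lies in $W^{1,q}_0(\Omega_\ee,\mathbb{R}^2)$. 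Properties (i)--(iii) transfer cell by cell from the previous step. The scaling $y=(x-\ee k)/\ee$ exchanges a derivative of $w$ for $\ee^{-1}$ times a derivative of $R_Y\varphi$ (and conversely), which translates the $\ee$-independent Bogovskii bound into the local estimates $\|R_{k,\ee}w\|_{L^q(Y^\ee_k)}\leq C_q(\|w\|_{L^q(Y^\ee_k)}+\ee\|\nabla w\|_{L^q(Y^\ee_k)})$ and $\|\nabla R_{k,\ee}w\|_{L^q(Y^\ee_k)}\leq C_q(\ee^{-1}\|w\|_{L^q(Y^\ee_k)}+\|\nabla w\|_{L^q(Y^\ee_k)})$. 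Raising to the $q$-th power and summing over $k\in\mathcal K$ (the non-perforated complement contributing only $\|w\|_{W^{1,q}}$) yields the two claimed global estimates.

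The technically most delicate step is enforcing properties (ii) and (iii) simultaneously at the cell level: among the infinitely many $W^{1,q}_0$-preimages of a given divergence, which differ by divergence-free vector fields with vanishing trace, one has to single out the specific branch that reduces to $(1-\eta)\varphi$ whenever $\varphi|_\mathcal{T}=0$. This can be done either by fixing $\mathcal{B}$ via the Bogovskii--Galdi integral kernel or, equivalently, by rewriting $R_Y\varphi=\varphi-\Psi_\varphi$ with $\Psi_\varphi$ a solution of an auxiliary boundary-value problem that is identically zero whenever $\varphi$ itself vanishes on $\mathcal{T}$. Once this matching is performed inside a single cell, the remaining gluing, scaling, and summation steps are routine.
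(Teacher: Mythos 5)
The paper does not actually prove this lemma: it is quoted from Tartar, Allaire and Mikeli\'c, and the construction sketched in \Cref{rmk:restriction-operators-coincide} is the cell-by-cell Stokes problem in a corona $\mathcal{B}\setminus\mathcal{T}$, with the divergence corrected by the constant $\frac{1}{|\mathcal{B}\setminus\mathcal{T}|}\int_{\mathcal{T}}\Div\,\tilde\omega$. Your plan (cut-off plus Bogovskii corrector on a corona, then rescale and glue) reproduces correctly the first and third bullet, the zero trace on $\partial\mathcal{T}$, the compatibility condition ($c_\varphi=-\frac{1}{|\mathcal{T}_1\setminus\mathcal{T}|}\int_{\mathcal{T}}\Div\,\varphi$, hence $c_\varphi=0$ for divergence-free fields), and the scaling argument that turns the $\ee$-independent unit-cell bound into the two stated inequalities; all of that is sound.

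The genuine gap is the second bullet, and the patch you propose does not close it. The Bogovskii operator is a right inverse of the divergence, not a left inverse: for $v\in W^{1,q}_0(\mathcal{T}_1\setminus\mathcal{T})$ one has $\Div\,\mathcal{B}(\Div\,v)=\Div\,v$, but in general $\mathcal{B}(\Div\,v)\neq v$ (take $v$ divergence-free with zero trace and $v\not\equiv 0$: then $\mathcal{B}(\Div\,v)=\mathcal{B}(0)=0$). Concretely, choose $\varphi$ divergence-free in $Y$, vanishing on $\mathcal{T}$ but not on the set where $\eta<1$; then $c_\varphi=0$, the argument of $\mathcal{B}$ is $-\nabla\eta\cdot\varphi=\Div\big((1-\eta)\varphi\big)$, and $\mathcal{B}$ returns some field that need not coincide with $(1-\eta)\varphi$, so $R_Y\varphi\neq\varphi$ and property (ii) fails. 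This property is not cosmetic here: it is used in \Cref{prop:pressure} to identify $\nabla P_\ee$ with $\nabla\pre_\ee$ on $\Omega_\ee$, and again in \Cref{sec:Darcy} through $\mathcal{R}_{p',\ee}(\varphi\,\omega^{i,\ee})=\varphi\,\omega^{i,\ee}$. Your alternative suggestion, writing $R_Y\varphi=\varphi-\Psi_\varphi$ with $\Psi_\varphi$ vanishing whenever $\varphi|_{\mathcal{T}}=0$, is exactly what must be constructed, and the standard way to do it is the Stokes cell problem of \Cref{rmk:restriction-operators-coincide}: there, when $\varphi\equiv 0$ in $\mathcal{T}$, the pair $(\varphi,\,\mathrm{const})$ itself solves the corona problem, and uniqueness of the Stokes system forces $\mathcal{R}\varphi=\varphi$; the $W^{1,q}$ bounds for $q\neq 2$ then come from $L^q$ regularity theory for Stokes (this is where Mikeli\'c's extension of Tartar--Allaire is needed). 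So either switch to that construction, or replace your Bogovskii corrector by one defined through a uniquely solvable auxiliary problem whose solution vanishes when $\varphi|_{\mathcal{T}}=0$; as it stands, the proposal proves a weaker statement than the lemma.
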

\noindent 
As in the studies by Allaire \cite{MR1079190,MR1079189}, this lemma will be instrumental in implicitly defining an extension $\nabla P_\ee$ of  $\nabla \pre_\ee$ across the entire domain $\Omega$ (cf.~\Cref{prop:pressure}). 
\begin{remark}\label{rmk:restriction-operators-coincide}
    Despite the abstract formulation of \Cref{lemma:restriction_operator}, the construction of the operators $\mathcal{R}_{q, \ee}$ is rather ``mechanical''. As an example, one might consider the unit cell $Y=[-1/2,1/2]^2$ and a sufficiently large smooth open set $\mathcal{B}  \subset Y$ that acts as a ``control volume'', with $\overline{\mathcal{T}} \subset \mathcal{B}$. For any function $\tilde \omega \in W^{1,q}(Y)^2$, we define $\mathcal{R}(\tilde \omega) =\tilde \omega$ outside the region $Y^*\setminus \mathcal{B}$. Inside the region $\mathcal{B}\setminus \mathcal{T}$, however, $\mathcal{R}(\tilde \omega)$ solves the Stokes equation given by:
    \begin{equation*}
    \begin{cases}
        -\Delta_y \mathcal{R}(\tilde \omega)+ \nabla_y Q = -\Delta \tilde \omega \quad 
        &\mathcal{B}\setminus \mathcal{T},\\
        \Div_y \mathcal{R}(\tilde \omega) =\Div_y \tilde \omega + \frac{1}{|\mathcal{C}|}\int_{\mathcal{T}} \Div_y \tilde \omega
        \quad 
        &\mathcal{B}\setminus \mathcal{T},\\
         \mathcal{R}(\tilde \omega) = \tilde \omega\quad 
        &\partial \mathcal{B},\\
         \mathcal{R}(\tilde \omega) = 0\quad 
        &\partial \mathcal{T}.
    \end{cases}
    \end{equation*}    
    Consequently, each operator $\mathcal{R}_{q, \ee}$ is constructed by taking a function $\omega \in W^{1,q}(\Omega)^2$, selecting a cell $ Y^{\ee}_k = x_k + \ee Y $, and then applying $\mathcal{R}$ to the re-scaled function $ \tilde \omega(y) = \omega(x_k+\ee y)$. The resulting function is then re-scaled back to its original variables  $ x \in  Y^{*,\ee}_k $. Further details are referred to Section 2.2 of \cite{MR1079189} for the case $q = 2$ and Remark 2.1 and Lemma 2.2 in \cite{MR1131849} for more general $q \in (1, \infty)$. Notably, we can assume that for any $1<q<q'<\infty$, 
    $\mathcal{R}_{q,\ee}(\omega) =  \mathcal{R}_{q',\ee}(\omega)$ in $W^{1,q}_0(\Omega_\ee)$, for any $\omega \in W^{1,q'}(\Omega)\hookrightarrow W^{1,q}(\Omega)$. The constants $C_{q,\Omega}$, however, are certainly different for any $q\in(1,\infty)$ and become unbounded as $q\to 1$ or $q\to \infty$.
\end{remark}
\noindent 
Another essential instrument for our analysis is the following Poincar\'e inequality within \(W^{1,q}_0(\Omega_\ee)\) (as delineated in Lemma 2.3 of \cite{MR1131849}). This inequality leverages the vanishing of functions on the boundary of each obstacle, intertwining the Poincar\'e constant with the scaling parameter $\ee>0$.
\begin{lemma}\label{Poincare_ineq}
For any $q\in (1, \infty)$, there exists a constant $C_{q, \Omega}>0$ depending only on $q$ and $\Omega$, such that 
	\begin{equation*}
		\| f \|_{L^q(\Omega_\ee)} \leq C_{q, \Omega}\ee \| \nabla f \|_{L^q(\Omega_\ee)}.
	\end{equation*}
for all $\ee>0$ and any function $f$ in $W^{1,q}_0(\Omega_\ee)$.
\end{lemma}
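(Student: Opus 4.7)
The plan is to localize onto the periodic cells $Y_k^{*,\ee}$ and exploit the fact that any $f \in W^{1,q}_0(\Omega_\ee)$ vanishes both on $\partial \mathcal{T}_k^{\ee}$ for every interior hole and on $\partial \Omega$. First, I would establish a reference Poincar\'e-type inequality on the unit cell: since $\partial \mathcal{T}$ is $C^\infty$ with strictly positive one-dimensional Hausdorff measure and $Y^{*} = Y \setminus \mathcal{T}$ is a connected bounded Lipschitz domain, a standard compactness--contradiction argument based on the Rellich--Kondrachov embedding yields a constant $C_{q,0}>0$, depending only on $q$ and the geometry of $Y^{*}$, such that every $g \in W^{1,q}(Y^{*})$ with $g|_{\partial \mathcal{T}} = 0$ satisfies $\|g\|_{L^q(Y^{*})} \leq C_{q,0} \|\nabla g\|_{L^q(Y^{*})}$.

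Next, for each interior index $k \in \mathcal{K}$ I rescale to this reference inequality by setting $g(y) := f(x_k^{\ee} + \ee y)$ on $Y^{*}$. The vanishing $f|_{\partial \mathcal{T}_k^{\ee}}=0$ translates to $g|_{\partial \mathcal{T}}=0$, and a change of variables gives $\|f\|_{L^q(Y_k^{*,\ee})}^q = \ee^{2}\,\|g\|_{L^q(Y^{*})}^q$ together with $\|\nabla f\|_{L^q(Y_k^{*,\ee})}^q = \ee^{2-q}\,\|\nabla g\|_{L^q(Y^{*})}^q$, so the reference estimate produces
\begin{equation*}
\|f\|_{L^q(Y_k^{*,\ee})} \leq C_{q,0}\, \ee\, \|\nabla f\|_{L^q(Y_k^{*,\ee})}.
\end{equation*}
The residual region $\mathcal{B}_\ee := \Omega_\ee \setminus \bigcup_{k \in \mathcal{K}} Y_k^{*,\ee}$ is contained in a tubular neighborhood of $\partial \Omega$ of width $O(\ee)$, since only cells that straddle $\partial \Omega$ contribute to it. Using a finite $C^\infty$-atlas of $\partial \Omega$ to locally straighten the boundary, this region becomes a strip $\{0 < x_2 < C\ee\}$ on which $f$ vanishes at $x_2 = 0$; applying the one-dimensional Poincar\'e inequality in the normal direction and integrating tangentially yields $\|f\|_{L^q(\mathcal{B}_\ee)} \leq C_{q,\Omega}\, \ee\, \|\nabla f\|_{L^q(\mathcal{B}_\ee)}$.

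Summing the $q$-th powers of the interior and boundary estimates over $k \in \mathcal{K}$ and over $\mathcal{B}_\ee$ respectively and then taking the $q$-th root produces the claimed inequality. The principal obstacle is the boundary-layer step: one must verify that the width of $\mathcal{B}_\ee$ is truly $O(\ee)$ and that the local straighteners have $\ee$-uniform bounds. Both facts follow from the compactness of $\partial \Omega$ together with its $C^\infty$ regularity, which yields a uniform lower bound on the reach of $\partial \Omega$ and a finite atlas of uniformly bounded chart diffeomorphisms. The reference Poincar\'e inequality and its cell-wise rescaling are then essentially mechanical.
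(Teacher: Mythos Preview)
The paper does not give its own proof of this lemma; it simply cites Lemma~2.3 of Mikeli\'c \cite{MR1131849} and moves on. Your argument is the standard one underlying that reference: a Poincar\'e inequality on the reference cell $Y^*$ for functions vanishing on $\partial\mathcal{T}$, rescaled to each perforated cell $Y_k^{*,\ee}$, plus a thin-strip estimate near $\partial\Omega$ exploiting $f|_{\partial\Omega}=0$. The rescaling computation and the cell-wise summation are correct, and your treatment of the boundary layer via a fixed $C^\infty$ atlas of $\partial\Omega$ (uniform reach, bounded chart Jacobians, one-dimensional Poincar\'e in the normal variable over a segment of length $O(\ee)$) is exactly the right mechanism.

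Two minor remarks. First, your constant $C_{q,0}$ depends on the geometry of $Y^*$ as well as on $q$; the paper's phrasing ``depending only on $q$ and $\Omega$'' tacitly treats the reference cell as fixed, so this is not a discrepancy. Second, when covering $\mathcal{B}_\ee$ by finitely many boundary charts you should note that the charts have bounded overlap, so summing the local strip estimates only inflates the constant by a fixed factor; you allude to this but it is worth stating explicitly. With these cosmetic points addressed, your proof is complete and matches the approach one finds in the cited literature.
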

\noindent 
We anticipate that we will predominantly use \Cref{Poincare_ineq} to ascertain the vanishing of $\tilde u_\ee $ (cf.~\eqref{eq:estimate-of-tildeu-ee-Poinc}) and the asymptotic limit of $\tilde u_\ee/\ee$  (cf.~\eqref{eq:estimate-of-tildeu-ee-Poinc} and further estimate within \Cref{sec:Darcy}), as it weakly converges to a solution of the Darcy's law. Additionally, we will employ in our analysis the following Poincar\'e-Wirtinger inequality in the rescaled cells $Y_{k}^{*, \ee}$. This homogenisation version entangles once more the constant of the inequality with the scale parameter $\ee>0$.
\begin{lemma}\label{lemma:Poincare-Wirtinger-inequality}
    For any $1\leq q< \infty$, there exists a positive constant $C_{q, Y^*}$ which depends only on $q$ and the unitary cell $Y^*$, such that
    \begin{equation*}
        \Big\| 
            f - \fint_{ Y^{*, \ee}_k } f
        \Big\|_{L^q(Y^{*, \ee}_k)}\leq \ee \,C_{q, Y^*}  \| \nabla f \|_{L^q(Y^{*, \ee}_k)}, 
        \quad \text{with}\quad 
        \fint_{ Y^{*, \ee}_k } f = \frac{1}{|Y^{*, \ee}_k|}\int_{ Y^{*, \ee}_k} f(x) dx,
    \end{equation*}
     for any $\ee>0$, any cell $Y^{*, \ee}_k$ and for any function $f \in W^{1,q}(Y^{*, \ee}_k)$. 
\end{lemma}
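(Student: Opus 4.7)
The plan is to reduce the claimed bound to the classical Poincar\'e-Wirtinger inequality on the fixed reference cell $Y^*$ via a one-step rescaling argument. The underlying observation is that each cell $Y^{*,\ee}_k = x_k^\ee + \ee Y^*$ is the image of $Y^*$ under the affine diffeomorphism $y\mapsto x_k^\ee + \ee y$, so all dependence on $k$ and $\ee$ can be absorbed into a Jacobian and a gradient-scaling factor, leaving a universal constant on the reference cell.

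First I would invoke the classical Poincar\'e-Wirtinger inequality on $Y^*$: since $Y^* = Y\setminus \mathcal T$ is bounded, connected, and has $C^\infty$ boundary (by the standing hypotheses on $\mathcal T$ and $Y^*$ recorded at the beginning of \Cref{sec:main-result}), a standard textbook result produces a constant $C_{q,Y^*}>0$, depending only on $Y^*$ and $q$, such that
$$
\Big\| g - \fint_{Y^*} g \Big\|_{L^q(Y^*)} \leq C_{q,Y^*}\,\|\nabla g\|_{L^q(Y^*)}\qquad \forall\, g \in W^{1,q}(Y^*).
$$
The only ingredients this uses are boundedness, connectedness, and Lipschitz (in fact smoother) boundary regularity, all of which are in force.

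Next, for an arbitrary $f \in W^{1,q}(Y^{*,\ee}_k)$, I would introduce the rescaled function $g(y) := f(x_k^\ee + \ee y)$, which lies in $W^{1,q}(Y^*)$. A short chain-rule and change-of-variables computation yields $\nabla_y g(y) = \ee\,(\nabla f)(x_k^\ee + \ee y)$ together with the scaling identities
$$
\|g\|_{L^q(Y^*)}^q = \ee^{-2}\,\|f\|_{L^q(Y^{*,\ee}_k)}^q,\qquad
\|\nabla g\|_{L^q(Y^*)}^q = \ee^{q-2}\,\|\nabla f\|_{L^q(Y^{*,\ee}_k)}^q,\qquad
\fint_{Y^*} g = \fint_{Y^{*,\ee}_k} f.
$$
Applying the reference-cell inequality to $g$ and translating both sides back to $f$ via these relations will produce exactly one net factor of $\ee$ on the right-hand side, giving the claim with the same constant $C_{q,Y^*}$ and with no dependence on $k$ or on the translation $x_k^\ee$.

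I do not anticipate any real obstacle: the argument is a one-line dilation once the inequality on the fixed domain $Y^*$ is in hand. The only item worth emphasising is that the constant's independence of $x_k^\ee$ and $\ee$ is built in by design, since the reference inequality is posed on a fixed domain and every factor of $\ee$ has been tracked explicitly through the Jacobian and gradient scaling, so no hidden dependence can leak into $C_{q,Y^*}$.
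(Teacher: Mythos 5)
Your proposal is correct and coincides with the paper's own argument: the paper likewise rescales $f$ to $g(y)=f(x_k^\ee+\ee y)$ on the reference cell $Y^*$, applies the classical Poincar\'e--Wirtinger inequality there, and rescales back, with the scaling factors working out exactly as you computed.
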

\noindent 
As an aside for $q=2$ we refer to Lemma 2.2 in \cite{MR1225439}. For general $q \in (1,\infty)$ the proof involves rescaling the function $f$ to a function $g $ defined in the unitary cell $Y^*$ by $g(y) = f(x_k^\epsilon + \epsilon y)$. This is followed by applying the classical Poincar\'e-Wirtinger inequality in the unit cell $Y^*$. Rescaling back to the original variable $x \in Y^{*, \epsilon}_k$ then yields the desired result.

\smallskip \noindent 
Finally, we anticipate that we will establish a suitable extension of the pressure $\pre_\ee$ and several associated estimates using an argument rooted in the so-called Bogovskii's approach  (cf.~\Cref{prop:pressure} and \eqref{def:Fg}). We state the exsitence of the Bogovskii operator with the following lemma.
\begin{lemma}\label{lemma:Bogovskii}
    Let $\Omega\subset \mathbb R^n$ be a bounded domain in $\mathbb R^n$ with smooth boundary, $n\geq 2$. Given an average-free function $f$ in $\lbar^q_0(\Omega)$, with $q\in (1,\infty)$, there exists a function 
    $\psi_f \in W^{1,q}_0(\Omega)$ such that $\Div\, \psi_f = f$ and furthermore
    \begin{equation*}
        \| \nabla \psi_f \|_{L^q(\Omega)} \leq C_{q,\Omega} \| f \|_{L^q(\Omega)},
    \end{equation*}
    for a suitable constant $C_{q,\Omega}$ that depends only on $q$ and $\Omega$.
\end{lemma}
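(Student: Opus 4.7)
The plan is classical, going back to Bogovskii, and proceeds in two steps: first handle the case when $\Omega$ is star-shaped with respect to an open ball, then reduce the general smooth domain to a finite union of such star-shaped pieces via a partition-of-unity argument.

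For the star-shaped case, assume $\Omega$ is star-shaped with respect to a ball $B\ssubset \Omega$ and fix a nonnegative $\omega\in C^\infty_c(B)$ with $\int \omega = 1$. Define the explicit operator
\begin{equation*}
    \psi_f(x) := \int_\Omega N(x,y)\, f(y)\,dy,
    \qquad
    N(x,y):= \frac{x-y}{|x-y|^n}\int_{|x-y|}^{\infty} \omega\!\left(y+r\tfrac{x-y}{|x-y|}\right) r^{n-1}\,dr.
\end{equation*}
A direct change of variables $z = y + r(x-y)/|x-y|$ gives the crucial identity $\Div_x \psi_f(x) = f(x) - \omega(x)\int_\Omega f$, so the hypothesis $\int_\Omega f = 0$ yields $\Div\,\psi_f = f$. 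The star-shape condition, together with the compact support of $\omega$ inside $\Omega$, ensures that $N(x,y)$ vanishes when $x$ is close enough to $\partial \Omega$, so that $\psi_f\in W^{1,q}_0(\Omega)$.

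For the general smooth case, cover $\overline{\Omega}$ with finitely many open balls $U_1,\dots,U_N$ such that each $\Omega_i := U_i\cap \Omega$ is star-shaped with respect to a ball $B_i$, and arrange the cover so that consecutive pieces have a nonempty overlap. Choose a smooth partition of unity $\{\chi_i\}$ subordinate to $\{U_i\}$ and iteratively split $f = \sum_i f_i$, where each $f_i$ is supported in $\Omega_i$ and has zero average on $\Omega_i$: this is achieved by subtracting from $\chi_i f$ a suitable smooth function supported in the overlap $\Omega_i\cap \Omega_{i+1}$, carrying the mismatched averages down the chain, until the last piece automatically has zero average by the global mean-free hypothesis. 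Apply Step~1 to each $f_i$ to obtain $\psi_{f_i}\in W^{1,q}_0(\Omega_i)$, extend each by zero to $\Omega$, and set $\psi_f = \sum_i \psi_{f_i}$.

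The main technical obstacle is the $L^q$-estimate $\|\nabla\psi_f\|_{L^q(\Omega)}\leq C_{q,\Omega}\|f\|_{L^q(\Omega)}$ in the star-shaped step. One computes $\nabla_x N(x,y)$ and decomposes it into a smooth remainder plus a singular kernel $K(x,y)$ which is homogeneous of degree $-n$ along the diagonal and satisfies the standard size and H\"ormander-type regularity conditions; the $L^q$-bound then follows from the Calder\'on-Zygmund theorem for singular integrals of convolution type (in $x-y$), with a constant blowing up as $q\to 1$ or $q\to \infty$. The partition-of-unity assembly in Step~2 is then purely bookkeeping, yielding the desired constant $C_{q,\Omega}$ that depends on $q$ and on the geometry of $\Omega$.
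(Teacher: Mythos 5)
The paper does not actually prove this lemma: it invokes Theorem III.3.1 of Galdi's book, and your proposal is precisely the classical Bogovskii argument established there (explicit kernel on a domain star-shaped with respect to a ball, a Calder\'on--Zygmund estimate for the gradient, then a partition-of-unity/chain decomposition reducing a bounded smooth domain to finitely many star-shaped pieces). So you are following essentially the same route as the paper's cited source, and the overall structure is correct.

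Two points in your sketch are stated too loosely, though both are repaired by the standard form of the same steps. First, it is not true that $N(x,y)$ vanishes whenever $x$ is close to $\partial\Omega$: the kernel is nonzero exactly when $x$ lies on a segment joining $y$ to $\supp\omega\subset B$, and if $y$ itself is near $\partial\Omega$ such a segment can pass arbitrarily close to the boundary. The membership $\psi_f\in W^{1,q}_0(\Omega)$ is instead obtained by density: for $f\in C^\infty_c(\Omega)$ with zero mean, $\supp\psi_f$ is contained in the compact union of segments joining $\supp f$ to $\overline B$, hence $\psi_f\in C^\infty_c(\Omega)$, and the a priori bound then transfers the conclusion to all $f\in\lbar^q_0(\Omega)$ (the divergence identity likewise holds first for smooth compactly supported $f$ and passes to the limit). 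Second, the singular part of $\nabla_x N$ is not a convolution kernel: it has the form $K(x,x-y)$ with genuine dependence on the base point $x$ (through $\omega$), so one needs the Calder\'on--Zygmund theorem for variable kernels that are homogeneous of degree $-n$ and mean-free in the second argument, which is exactly the tool used in Galdi's proof; with that substitution your estimate and the blow-up of $C_{q,\Omega}$ as $q\to1$ or $q\to\infty$ are as claimed.
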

\noindent 
We refer to Theorem III.3.1 of \cite{Galdi2011} for the proof of this lemma, where the author considered the more general case of a domain that is the union of star-shaped sets (which is beyond the scope of this paper). Additionally, without loss of generality, we may assume that the constants denoted by $C_{q, \Omega}$ all coincide in the previous lemmas.

\noindent 
Having introduced the foundational tools and key lemmas crucial for our homogenisation analysis, we are now in a prime position to commence the proof of our main theorem. 

\section{Aubin-Lions Lemma Revisited: a Homogenisation Extension}\label{sec:Aubing-Lions}

\noindent 
A central result in our analysis is an adapted version of the well-known Aubin-Lions Lemma, tailored for functions on the  domain $(0,T)\times \Omega_\ee$, as $\ee \to 0$.  Specifically, we analyse a sequence of functions whose derivatives are uniformly bounded within this domain in $\ee>0$. This allows us to extract a subsequence, whose extension exhibit a property similar to ``strong convergence'' in the larger domain $(0,T)\times \Omega$.  

\noindent 
To articulate this result, we first recall that $\theta$ is the ratio $|Y^*|/|Y|$ and, for any function $f: (0,T)\times \Omega_\ee \rightarrow \mathbb{R}$,  $\tilde{f}$ denotes the zero-extension of $f$ in $(0,T)\times \Omega$. 
Additionally, we consider a sequence of positive values $(\ee_l)_{l \in \mathbb{N}}$, which monotonically converges to zero, and we simplify our notation with $\Omega_l := \Omega_{\ee_l}$. Our lemma is then formally stated as follows.
\begin{theorem}\label{lemma:compactness_Omega_eps}     
Let $q,r\in (1,\infty)$, $s \in [1,q]$ and $(f_l)_{l\in \NN}$ be a sequence of functions in
\begin{equation*}
    f_l \in
    L^\infty(0,T;L^q(\Omega_l)) 
    \quad 
    \text{with}
    \quad
    \nabla f_l \in 
    L^\infty(0,T;L^s(\Omega_l)) 
    \quad
    \text{and}
    \quad
    \partial_t f_l \in L^r(0,T;L^s(\Omega_l)),
\end{equation*}
the norms of which are uniformly bounded: there exists a constant $\mathbf{C}>0$ such that
\begin{equation}\label{est:Thm-Aubin-Lions-uniform-estimates}
    \| f_l              \|_{L^\infty(0,T;L^q(\Omega_{l}))} + 
    \| \nabla f_l       \|_{L^\infty(0,T;L^s(\Omega_{l}))} +
    \| \partial_t f_l   \|_{L^r(0,T;L^s(\Omega_l))}
    \leq \mathbf{C},\quad \text{for any}\quad  l \in \NN.
\end{equation}
There exist a subsequence $(f_{l_j})_{j\in \mathbb N}$ and a function $f \in L^\infty(0,T;L^q(\Omega))$ such that 
\begin{equation*}
\tilde f_{l_j}\stackrel{\ast}{\rightharpoonup} \theta f \quad \text{in}\quad  L^\infty(0,T;L^q(\Omega))
\end{equation*} 
and moreover the following property is satisfied: for any sequence $(g_j)_{j\in \mathbb N}$ 
with $g_j \in L^1(0,T; L^{\alpha}(\Omega_{l_j}))$, for some $\alpha \in (q', \infty]$, satisfying $\tilde g_j \rightharpoonup \theta g $ weakly in $L^1(0,T; L^{\alpha }(\Omega))$ (or weakly-$\ast$, if $\alpha = \infty$)
\begin{equation}\label{eq:lemma-compactness}
        \lim_{j \to \infty}
        \int_0^T 
        \int_{\Omega_{l_j}}
            \phi 
            f_{l_j}  
            g_j 
        = 
        \theta 
        \int_0^T 
        \int_{\Omega}
        \phi
        f  g
\end{equation}
for any $\phi \in \mathcal{C}^\infty_c([0,T]\times \Omega)$. Finally, if $s>1$, then $\nabla f\in L^\infty(0,T;L^s(\Omega))$ and  $\partial_t f \in L^r(0,T;L^s(\Omega))$.
\end{theorem}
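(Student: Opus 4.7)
The plan is to adapt the strategy of Lemma~2.3 in \cite{MR1225439} -- originally developed for time-independent $H^1$-sequences -- to the present parabolic, variable-integrability setting. The basic idea is to decompose each $f_l$ into a slowly-varying macroscopic piece, built from cell averages, plus a quantitatively small microscopic remainder, and to extract compactness from the macroscopic piece via the classical Aubin--Lions lemma. Concretely, for each admissible cell set $a_k^l(t) := \fint_{Y_k^{*,\ee_l}} f_l(t,y)\, dy$, introduce the piecewise constant function $M_l(f_l)(t,x) := a_k^l(t)$ for $x \in Y_k^{\ee_l}$ (the full cell, including the hole), and the piecewise affine function $N_l(f_l)(t,\cdot)$ obtained by interpolating the values $\{a_k^l(t)\}$ on a triangulation of $\Omega$ whose vertices are the cell centres $x_k^{\ee_l}$.

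Two key estimates drive the analysis. Firstly, \Cref{lemma:Poincare-Wirtinger-inequality} applied cell-wise gives directly $\|\tilde f_l - \mathbbm{1}_{\Omega_l} M_l(f_l)\|_{L^\infty(0,T;L^s(\Omega))} \leq C \ee_l \|\nabla f_l\|_{L^\infty(0,T;L^s(\Omega_l))} \to 0$. Secondly, applying the same inequality to the connected union $Y_k^{*,\ee_l}\cup Y_{k'}^{*,\ee_l}$ of adjacent cells controls the cell-to-cell gap as $|a_k^l - a_{k'}^l|^s \leq C \ee_l^{s-2} \|\nabla f_l\|_{L^s(Y_k^{*,\ee_l}\cup Y_{k'}^{*,\ee_l})}^s$, and a scaling balance between this $\ee_l^{s-2}$, the $\ee_l^2$ from triangle volumes, and the $1/\ee_l^s$ from the affine slope yields $\|\nabla N_l(f_l)\|_{L^\infty(0,T;L^s(\Omega))} \leq C \mathbf{C}$. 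Combined with $\|N_l(f_l)\|_{L^\infty(0,T;L^q)} + \|\partial_t N_l(f_l)\|_{L^r(0,T;L^s)} \leq C \mathbf{C}$ (from Jensen's inequality and the commutation $\partial_t M_l(f_l) = M_l(\partial_t f_l)$), the classical Aubin--Lions lemma delivers a subsequence $N_{l_j}(f_{l_j}) \to f$ strongly in $L^r(0,T;L^\sigma(\Omega))$ for any $\sigma$ below the Sobolev exponent of $W^{1,s}(\Omega) \hookrightarrow L^\sigma(\Omega)$; the first estimate then transfers this strong convergence from $N_{l_j}(f_{l_j})$ to $M_{l_j}(f_{l_j})$.

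The conclusions now follow by a systematic use of the decomposition $\tilde f_l = \mathbbm{1}_{\Omega_l} M_l(f_l) + (\tilde f_l - \mathbbm{1}_{\Omega_l} M_l(f_l))$. For the weak-$\ast$ limit, the first summand converges to $\theta f$ as the product of the strong limit of $M_l(f_l)$ with the weak-$\ast$ limit $\mathbbm{1}_{\Omega_l}\rightharpoonupstar \theta$ in $L^\infty(\Omega)$, while the second summand vanishes in $L^\infty(0,T;L^s)$. For the product identity \eqref{eq:lemma-compactness}, inserting the same decomposition inside the integral reduces the problem to showing that $\int_0^T\int_\Omega \phi M_{l_j}(f_{l_j})\tilde g_j \to \theta\int_0^T\int_\Omega \phi f g$ (a strong-weak pairing against $\tilde g_j \rightharpoonup \theta g$), plus a remainder controlled via H\"older's inequality by interpolating the $L^s$-smallness of $\tilde f_l - \mathbbm{1}_{\Omega_l} M_l(f_l)$ against its uniform $L^q$-bound; the hypothesis $\alpha > q'$ is precisely what makes this interpolation reach an exponent dual to $\alpha$. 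Finally, when $s>1$, the bounds on $\nabla f$ and $\partial_t f$ transfer by lower semicontinuity from the uniform estimates on $N_l(f_l)$ and weak compactness of reflexive $L^s$-spaces.

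The principal obstacle is the uniform $L^\infty_t L^s_x$ bound on $\nabla N_l(f_l)$, where the scaling balance sketched above must work out precisely. This step relies crucially on the connectedness of the reference domain $Y^*$, which guarantees that adjacent-cell unions remain connected with Poincar\'e--Wirtinger constants uniformly bounded under rescaling, together with a bookkeeping argument ensuring that each cell appears in only finitely many two-cell unions. A secondary, more routine difficulty lies in threading the exponents $q$, $s$ and $\alpha$ through the H\"older/interpolation step of the product convergence, so that the oscillatory remainder truly becomes negligible while preserving the weak topology in which the test sequence $\tilde g_j$ is assumed to converge.
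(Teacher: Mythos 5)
Your proposal is essentially sound and runs parallel to the paper's own argument in its main architecture: your piecewise-constant cell averages $M_l(f_l)$ are exactly the paper's $\bar f_l$, and the two estimates you single out --- the cell-wise Poincar\'e--Wirtinger bound on $f_l - M_l(f_l)$ and the adjacent-cell comparison of averages on the connected two-cell union (this is precisely \Cref{lemma:mean-in-Z}, proved in the appendix by a compactness/contradiction argument to get a scale-invariant constant) --- are the same two ingredients the paper uses; the treatment of the product term via interpolation between the $L^s$-smallness and the $L^q$-bound, using $\alpha>q'$, matches Part~(b) of the paper's proof, as does the identification of the weak-$\ast$ limit with the strong limit of the averages. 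Where you genuinely diverge is the compactness mechanism for the macroscopic part: you introduce a piecewise-affine interpolant $N_l(f_l)$ of the cell averages, verify a uniform $L^\infty(0,T;W^{1,s})$ bound through the scaling balance you describe (which does check out, since $\ee_l^{2}\cdot\ee_l^{s-2}\cdot\ee_l^{-s}=1$), invoke the classical Aubin--Lions lemma, and transfer the convergence back to $M_l(f_l)$ using $\|N_l(f_l)-M_l(f_l)\|_{L^s}\lesssim \ee_l\|\nabla f_l\|_{L^s}$. The paper instead works directly with the piecewise-constant averages and proves precompactness in $\mathcal{C}([0,T];L^s(\omega))$ by hand, via Arzel\`a--Ascoli in time and the Kolmogorov translation criterion in space, the translations being controlled by the same adjacent-cell estimate. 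Your route buys the convenience of quoting a standard theorem and a cleaner derivation of $\nabla f\in L^\infty_t L^s$, $\partial_t f\in L^r_t L^s$ for $s>1$ (weak lower semicontinuity from the $N_l$ bounds, instead of the paper's difference-quotient argument); the paper's route buys uniform-in-time convergence directly and avoids any triangulation issues near $\partial\Omega$.

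Two points need repair, one of them substantive. First: you only claim strong convergence of $N_{l_j}(f_{l_j})$ in $L^r(0,T;L^\sigma(\Omega))$ with $r<\infty$, but in the product step the sequence $g_j$ is merely $L^1$ in time, so the H\"older estimate requires smallness of $\|M_{l_j}(f_{l_j})-f\|_{L^\infty(0,T;L^{\alpha'})}$; an $L^r_t$-strong limit cannot be paired against data that is only $L^1_t$. The fix is available from your own bounds: since $\partial_t N_l(f_l)$ is bounded in $L^r(0,T;L^s)$ with $r>1$, the Aubin--Lions--Simon theorem yields compactness in $\mathcal{C}([0,T];L^\sigma)$ --- which is exactly what the paper's Arzel\`a--Ascoli argument produces --- or alternatively one can combine a.e.-in-time convergence with the uniform integrability of $t\mapsto\|g_j(t)\|_{L^\alpha}$ furnished by the weak $L^1$-convergence of $\tilde g_j$. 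Second, a minor point: the cell centres only triangulate, and the averages are only defined on, cells compactly contained in $\Omega$, so your bounds on $\tilde f_l-\mathbbm{1}_{\Omega_l}M_l(f_l)$ and on the interpolant hold on the union of interior cells rather than on all of $\Omega$; since $\phi$ has compact support and the boundary layer has vanishing measure this is harmless, but the argument should be phrased on subdomains $\omega\ssubset\Omega$ with a diagonal extraction, exactly as the paper does.
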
 
\begin{remark}\label{rmk:Aubin-Lions}
    The classical Aubin-Lions Lemma directly implies \Cref{lemma:compactness_Omega_eps} if each function $f_l$ satisfies the uniform estimates \eqref{est:Thm-Aubin-Lions-uniform-estimates} in the domain $(0,T)\times \Omega$. This implication is reinforced by the Rellich-Kondrachov Theorem, which asserts the compact embedding $W^{1,s}(\Omega)\ssubset L^s(\Omega)$. Consequently, Aubin-Lions ensures precompactness of the sequence $(f_{l})_{l \in \mathbb N}$ in $\mathcal{C}([0,T], L^s(\Omega))$. Furthermore, the uniform bounds in $L^\infty(0,T;L^q(\Omega))$ indicate its precompactness in $L^\infty(0,T; L^{\alpha'}(\Omega))$, for any $\alpha' \in [1,q)$. The novel aspect of \eqref{lemma:compactness_Omega_eps} lies in the requirement of uniform estimates solely within the domain $(0,T)\times \Omega_l$. This extends the work of Allaire and Murat (cf.~Lemma~2.3 in \cite{MR1225439}), where they provided a revised version of the Rellich-Kondrachov theorem for functions in $H^1(\Omega_l)$.

\end{remark}
    \noindent 
    Due to the technical nature of \Cref{lemma:compactness_Omega_eps}, readers primarily interested in the main system \eqref{maineq1}-\eqref{maineq3} may proceed directly to Section \ref{sec:pressure}, while this section comprehensively addresses the proof. Moreover, we first state a refined result under the additional assumption that the sequence $(f_l)_{l\in \mathbb N}$ is uniformly bounded in $L^\infty((0,T)\times \Omega_l)$.
\begin{prop}\label{thm:Aubin-Lions-Linfty}
     Assume that the hypotheses of \Cref{lemma:compactness_Omega_eps} are satisfied and 
     suppose additionally that $(f_l)_{l \in \mathbb N}$ is uniformly bounded in $L^\infty((0,T)\times \Omega_l)$. 
     Denote by $(f_{l_j})_{j\in \mathbb N}$ and $f$ the same subsequence and limit function as in \Cref{lemma:compactness_Omega_eps}. 
     Then $f$ belongs also to $L^\infty((0,T)\times \Omega)$ and for any $(g_j)_{j\in \mathbb N}$ with $g_j \in L^1(0,T;L^\alpha(\Omega_{l_j}))$ for some $\alpha \in (1, \infty]$, satisfying $\tilde g_j \rightharpoonup \theta g $ weakly in $L^1(0,T; L^{\alpha }(\Omega))$ (or weakly-$\ast$, if $\alpha = \infty$)
    \begin{equation}\label{eq:lemma-compactness-non-linear}
        \lim_{j\to \infty}
        \int_0^T 
        \int_{\Omega_{l_j}}
        \phi 
        f_{l_j}^m
        g_j  
        = 
        \theta 
        \int_0^T 
        \int_{\Omega}
        \phi
        f^m g
    \end{equation}
    for any $m\in \mathbb N$ and any $\phi \in \mathcal{C}^\infty_c([0,T]\times \Omega)$.
\end{prop}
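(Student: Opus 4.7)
The plan is to proceed in three steps: (i) upgrade the weak-$\ast$ limit $\theta f$ already supplied by \Cref{lemma:compactness_Omega_eps} to an $L^\infty((0,T)\times\Omega)$ function; (ii) identify inductively, for every $m \in \mathbb{N}$, the weak-$\ast$ limit of $\tilde{f_{l_j}^m}$ as $\theta f^m$; and (iii) deduce the product convergence \eqref{eq:lemma-compactness-non-linear} by a further application of \Cref{lemma:compactness_Omega_eps}, this time to the sequence $(f_l^m)_{l\in \mathbb{N}}$ rather than to $(f_l)_l$.

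For step (i), since the zero extension preserves the $L^\infty$ norm, $\tilde{f_{l_j}}$ is uniformly bounded in $L^\infty((0,T)\times \Omega)$. A weak-$\ast$ convergent subsequence in that space can then be extracted; its limit must coincide with $\theta f$ by density of $\mathcal{C}^\infty_c((0,T)\times \Omega)$ in $L^1(0,T;L^{q'}(\Omega))$. This gives $f\in L^\infty((0,T)\times\Omega)$ with $\|f\|_\infty \le \mathbf{C}/\theta$, and, by uniqueness of the limit, the full sequence $\tilde{f_{l_j}}$ converges weak-$\ast$ to $\theta f$ in $L^\infty((0,T)\times\Omega)$.

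The heart of the argument is step (ii), which I would carry out by induction on $m$, the case $m=1$ being step (i). Assuming the claim at order $m-1$, the chain rule together with the $L^\infty$ bound show that $(f_l^m)_l$ satisfies the hypotheses of \Cref{lemma:compactness_Omega_eps}: indeed $\nabla(f_l^m) = m f_l^{m-1}\nabla f_l$ and $\partial_t(f_l^m) = m f_l^{m-1}\partial_t f_l$ remain uniformly bounded in $L^\infty(0,T;L^s(\Omega_l))$ and $L^r(0,T;L^s(\Omega_l))$ respectively. The lemma then yields a further subsequence $(l_{j_k})_k$ and a limit $F_m$ with $\tilde{f_{l_{j_k}}^m} \rightharpoonupstar \theta F_m$ in $L^\infty(0,T;L^q(\Omega))$, upgraded to weak-$\ast$ convergence in $L^\infty((0,T)\times\Omega)$ exactly as in step (i). To identify $F_m = f^m$, I would invoke the product conclusion \eqref{eq:lemma-compactness} of \Cref{lemma:compactness_Omega_eps} on the original sequence $f_{l_{j_k}}$ paired with $g_k := f_{l_{j_k}}^{m-1}$: by the inductive hypothesis $\tilde{g_k} \rightharpoonupstar \theta f^{m-1}$ weak-$\ast$ in $L^\infty$, in particular weakly in $L^1(0,T;L^\alpha(\Omega))$ for any $\alpha > q'$. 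Therefore $\int_0^T\!\!\int_{\Omega_{l_{j_k}}} \phi\, f_{l_{j_k}}^m = \int_0^T\!\!\int_{\Omega_{l_{j_k}}} \phi\, f_{l_{j_k}}\, g_k \to \theta \int_0^T\!\!\int_\Omega \phi\, f\cdot f^{m-1}$, forcing $F_m = f^m$. Since every subsequence admits a further subsequence converging to the same unique limit, the convergence extends to the whole sequence $(l_j)_j$.

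For step (iii), I would apply \Cref{lemma:compactness_Omega_eps} once more, now with $(f_l^m)_l$ as the ``compact'' factor and $(g_j)_j$ as the ``weak'' factor. The only delicate point is the integrability restriction: \eqref{eq:lemma-compactness} demands a test exponent strictly greater than $q'$, whereas the present hypothesis only provides $\alpha > 1$. This is circumvented by noting that $(f_l^m)_l$ is uniformly bounded in $L^\infty((0,T)\times\Omega_l)$ and $\Omega$ is bounded, hence also uniformly bounded in $L^\infty(0,T;L^{\tilde q}(\Omega_l))$ for every $\tilde q \in (1,\infty)$; choosing $\tilde q$ so large that its H\"older conjugate $\tilde q'$ is strictly less than $\alpha$, the lemma applies, and combined with the identification from step (ii) of the weak-$\ast$ limit of $\tilde{f_{l_j}^m}$ as $\theta f^m$, it yields \eqref{eq:lemma-compactness-non-linear}. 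I expect the main obstacle to be precisely the nonlinear identification of step (ii): without any strong compactness of $\tilde{f_{l_j}}$ one cannot pass to the limit in the product $f_{l_j}\cdot f_{l_j}^{m-1}$ by elementary weak/weak-$\ast$ arguments, and the induction must crucially rely on the special product property \eqref{eq:lemma-compactness}.
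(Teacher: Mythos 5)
Your argument is correct, but it follows a genuinely different route from the paper. The paper proves the proposition inside the proof of \Cref{lemma:compactness_Omega_eps} (Part (c)): it writes $f_{l_j}^m=\big(\bar f_{l_j}+(f_{l_j}-\bar f_{l_j})\big)^m$, expands by the binomial theorem around the piecewise cell averages $\bar f_{l_j}$, uses that the $L^\infty$ bound upgrades the strong convergence $\bar f_{l_j}\to f$ to $L^\infty(0,T;L^{\beta}(\omega))$ for every finite $\beta$ (which is exactly what relaxes the exponent condition from $\alpha>q'$ to $\alpha>1$), and kills all terms containing $f_{l_j}-\bar f_{l_j}$ via the Poincar\'e--Wirtinger estimate; in particular no re-extraction of subsequences is needed. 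You instead treat \Cref{lemma:compactness_Omega_eps} as a black box: an induction on $m$ that identifies the weak-$\ast$ limit of $\widetilde{f_{l_j}^m}$ as $\theta f^m$ by pairing $f_{l_j}$ with $g_j=f_{l_j}^{m-1}$ in \eqref{eq:lemma-compactness}, followed by a second application of the lemma to the sequence $(f_l^m)_l$, whose hypotheses \eqref{est:Thm-Aubin-Lions-uniform-estimates} hold with an arbitrarily large exponent $\tilde q$ thanks to the $L^\infty$ bound and the chain rule, so that $\tilde q'<\alpha$ can be arranged for any $\alpha>1$; this is a clean way to recover the relaxed hypothesis on $g_j$ and correctly pinpoints that the nonlinear identification cannot be done by bare weak-$\ast$ arguments. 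Two points you should make explicit to be fully rigorous: the chain rule $\partial_t(f_l^m)=m f_l^{m-1}\partial_t f_l$ in the Bochner sense (justified, e.g., by the Lipschitz estimate $\|f(t)^m-f(\tau)^m\|_{L^s}\le m\|f\|_{L^\infty}^{m-1}\|f(t)-f(\tau)\|_{L^s}$ or by mollification in time), and, in your last step, the fact that re-applying the lemma to $(f_{l_j}^m)_j$ yields \eqref{eq:lemma-compactness} only along a further subsequence, so you must invoke the standard subsequence--uniqueness argument (the limit value $\theta\int_0^T\!\!\int_\Omega\phi f^m g$ being fixed by your step (ii)) to conclude \eqref{eq:lemma-compactness-non-linear} along the whole subsequence $(l_j)_j$, as the statement requires. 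With these small additions your proof is complete; the paper's approach buys a self-contained argument tied to the original subsequence, while yours buys brevity and reuse of the lemma's statement rather than its internal machinery.
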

\begin{proof}[Proof of \Cref{lemma:compactness_Omega_eps} and \Cref{thm:Aubin-Lions-Linfty}]

From the assumptions each $f_l$ belongs also to $W^{1,r}(0,T; L^s(\Omega_l))$, thus we can consider its representative in $\mathcal{C}([0,T], L^s(\Omega_l))$ without loss of generality. Moreover, from the estimates in \eqref{est:Thm-Aubin-Lions-uniform-estimates}, the sequence of extensions $(\tilde f_l)_{l\in \mathbb N}$ is uniformly bounded in $L^\infty(0,T;L^q(\Omega))$; therefore there exist a subsequence $(\tilde f_{l_j})_{j\in \mathbb N}$ and a function $f \in L^\infty(0,T;L^q(\Omega))$ such that $\tilde f_{l_j}\stackrel{\ast}{\rightharpoonup} \theta f$ weakly-$\ast$ in $L^\infty(0,T;L^q(\Omega))$. 

\smallskip 
\noindent 
We organise our proof into five major steps, summarized as follows:
\begin{enumerate}[(a)]
    \item We introduce a new sequence of functions $(\bar{f}_{l_j})_{j \in \mathbb N}$, which averages each function \(f_{l_j}\) within each cell  (cf.~\eqref{eq:def:barfl}). Under the claim that this sequence is precompact in an appropriate function space, we extract a subsequence that converges strongly to a function $\bar{f}$ (potentially different from $f$). The proof of the precompactness is deferred to Part (d).
    \item As \(j \to \infty\), we show that a suitable norm of \(f_{l_j} - \bar{f}_{l_j}\) approaches zero, allowing us to replace \(f_{l_j}\) with \(\bar{f}_{l_j}\) in \eqref{eq:lemma-compactness}. We then establish that at the limit, \(\bar{f}\) and \(f\) actually coincide, leading to our main identity \eqref{eq:lemma-compactness}.
    \item Next, we undertake the proof of \Cref{thm:Aubin-Lions-Linfty} and explore the identity \eqref{eq:lemma-compactness-non-linear}.
    \item We revisit the claim from part (a) to validate the precompactness of the sequence \((\bar{f}_{l_j})\).
    \item We conclude by showing that $\nabla f\in L^\infty(0,T;L^s(\Omega)) $ and $\partial_t f \in L^r(0,T;L^s(\Omega))$ if $s>1$.
\end{enumerate}
{\it \underline{Part~(a)}:} We recall that $\mathcal{K} = \mathcal{K}_{\ee_l}  \subset \mathbb{Z}^2$ represents the set of indices $k \in \mathbb{Z}^2$ for which $Y_k^{*,\ee_l} \subset \Omega$. 
For any $l\in \mathbb N$, we introduce the following function $\bar{f}_{l} \in \mathcal{C}([0,T], L^s(\Omega))$, which is piecewise constant at any time $t \in [0,T]$:
\begin{equation}\label{eq:def:barfl}
    \bar{f}_l(t,x):= 
    \sum_{k \in \mathcal{K}}
    \mathbbm{1}_{Y_k^{\ee_l}}(x)
    \bigg(
        \fint_{Y_k^{*,\ee_l}}
        f_l(t,\tilde x) d\tilde x
    \bigg),\qquad (t,x) \in [0,T]\times \Omega,
\end{equation}
where $\fint_{Y_k^{*,\ee_l}} f = \frac{1}{|Y_k^{*,\ee_l}|}\int_{Y_k^{*,\ee_l}} f$ denotes the average over the cell $Y_k^{*,\ee_l}$ (without the colloidal particle). The sequence $(\bar{f}_l)_{l\in \mathbb{N}}$ is well-defined in $(0,T)\times \Omega$ and it is uniformly bounded in $L^\infty(0,T;L^q(\Omega))$ (respectively $L^\infty((0,T)\times \Omega)$ under the assumptions of \Cref{thm:Aubin-Lions-Linfty}). Indeed
\begin{equation*}
\begin{aligned}
    \| \bar{f}_l \|_{L^\infty(0,T;L^q(\Omega))} 
    &= 
    \EssSup_{t \in (0,T)}
    \bigg(
    \int_\Omega 
        \bigg|   \sum_{k \in \mathcal{K}}
                 \mathbbm{1}_{Y_k^{\ee_l}}(x)
                \bigg(
                    \fint_{Y_k^{*,\ee_l}}
                    f_l(t,\tilde x) d\tilde x
                \bigg)
        \bigg|^q 
    dx
    \bigg)^\frac{1}{q}
    \\
    &= 
    \EssSup_{t \in (0,T)}
    \bigg(
    \sum_{k \in \mathcal{K}}
    | Y_k^{\ee_l}| 
    \bigg|   
       \frac{1}{|Y_k^{*,\ee_l}|}
       \int_{Y_k^{*,\ee_l}}
       f_l(t,\tilde x) d\tilde x
    \bigg|^q
    \bigg)^\frac{1}{q}.
\end{aligned}
\end{equation*}
Hence, applying the H\"older's inequality, we obtain
\begin{equation}\label{est:barfl-Lq}
\begin{aligned}
    \| \bar{f}_l \|_{L^\infty(0,T;L^q(\Omega))}&\leq  
    \EssSup_{t \in (0,T)}
    \bigg(
    \sum_{k \in \mathcal{K}}
       \frac{| Y_k^{\ee_l}| }{|Y_k^{*,\ee_l}|}
       \int_{Y_k^{*,\ee_l}}
       |f_l(t,\tilde x)|^q d\tilde x
    \bigg)^\frac{1}{q}
    =
    \EssSup_{t \in (0,T)}
    \bigg(
        \sum_{k \in \mathcal{K}}
       \frac{1}{\theta }
       \int_{Y_k^{*,\ee_l}}
       |f_l(t,\tilde x)|^q d\tilde x
    \bigg)^\frac{1}{q}
    \\
    & \leq 
    \EssSup_{t \in (0,T)}
    \bigg(
       \frac{1}{\theta }
       \int_{\Omega_l}
       |f_l(t,\tilde x)|^q d\tilde x
    \bigg)^\frac{1}{q}
    =
    \frac{1}{\theta^\frac{1}{q}} 
    \| f_l \|_{L^\infty(0,T;L^q(\Omega_l))}
    \leq 
     \frac{1}{\theta^\frac{1}{q}} 
    \mathbf C < \infty.
\end{aligned}
\end{equation}
Consequently, up to a subsequence, $(\bar{f}_{l_j})_{j \in \mathbb N}$ converges weakly-$\ast$ to a function $\bar{f} \in L^\infty(0,T; L^q(\Omega))$. Moreover, if the assumptions of \Cref{thm:Aubin-Lions-Linfty}) are satisfied, $(\bar{f}_{l_j})_{j \in \mathbb N}$   converges weakly-$\ast$ to $\bar f$ also in $L^\infty((0,T)\times \Omega)$.

\smallskip 
\noindent Next, we assert that for any compactly contained domain $\omega\ssubset \Omega$, the sequence $(\bar{f}_{l}|_{\omega})_{l\in \mathbb{N}}$ is precompact in $\mathcal{C}([0,T], L^s(\omega))$, for the chosen $s \in [1,q]$.  
The detailed proof of this assertion will be addressed in Part (d) as we now shift our focus to the remaining aspects of our main proof. Owing to this precompactness, we deduce that a representative of $\bar{f}|_{\omega}$ belongs to $\mathcal{C}([0,T], L^s(\omega))$, for any $\omega \ssubset \Omega$ and that, up to a subsequence, 
\begin{equation*}
    \bar{f}_{l_j}|_{\omega} \rightarrow \bar{f}|_{\omega}\quad 
    \text{strongly in}\quad \mathcal{C}([0,T], L^s(\omega)), \quad 
    \text{for any }\omega\ssubset \Omega.
\end{equation*}
Importantly, this subsequence can be chosen independently of $\omega$, given that the domain $\Omega$ can always be  written as the union of an increasing sequence of domains, each compactly contained within $\Omega$. Moreover, since $s \in [1, q)$, utilizing standard interpolations, $\bar{f}_{l_j} |_{\omega}$ strongly converges to $\bar{f} |_{\omega}$ in $L^\infty(0,T; L^{\alpha'}(\omega))$ for any $\alpha' \in [1, q)$ (or any $\alpha' \in [1, \infty)$, under the assumptions of \Cref{thm:Aubin-Lions-Linfty}).

\medskip 
\noindent 
{\it \underline{Part~(b)}:}  For a given $\phi \in \mathcal{C}^\infty_c([0,T]\times \Omega)$, we select a smooth domain $\omega \ssubset \Omega$ such that $\supp(\phi) \subset [0,T] \times \omega$. Moreover, for simplicity, we introduce the notation 
\begin{align*}
    \mathcal{C}_l   := \bigcup_{k \in \mathcal{K}} Y^{\ee_l}_k
    \qquad 
    \mathcal{C}_l^* := \bigcup_{k \in \mathcal{K}} Y^{*,\ee_l}_k,
\end{align*}
as the union of all cells (with and without colloids, respectively) that are included in $\Omega$. For a sufficiently large $L \in \mathbb{N}$ (ensuring $\ee_L$ is small enough), we have that $\omega  \subseteq \mathcal{C}_l \subseteq \Omega$, for all $l \in \mathbb N$ with $l \geq L$.

\noindent 
Next, we consider only $j\in \mathbb N$ such that $l_j \geq L$ and we proceed by dividing the integral on the left-hand side of \eqref{eq:lemma-compactness} into two major components:
\begin{equation}\label{eq:proof-lemma-spliting-int-fg}
        \int_0^T 
        \int_{\Omega_{l_j}}
        \phi 
        f_{l_{j}}
        g_{j}
        =
        \int_0^T 
        \int_{\Omega_{l_j}}
        \phi 
        (f_{l_{j}}-\bar{f}_{l_j})
        g_{j}
        +
        \int_0^T 
        \int_{\Omega_{l_j}}
        \phi 
        \bar{f}_{l_j}
        g_{j}
        =
        \int_0^T 
        \int_{\mathcal{C}_{l_j}^*}
        \phi 
        \big(
        f_{l_{j}} 
        -
        \bar{f}_{l_{j}}
        \big)
        g_j 
        +
        \int_0^T 
        \int_{\omega}
        \phi 
        \bar{f}_{l_{j}}
        \tilde 
        g_{j}.
\end{equation}
By assumption, $\tilde{g}_j \rightharpoonup \theta g$ weakly in $L^{1}(0,T; L^\alpha(\Omega))$ for a $\alpha \in (q', \infty]$ (weakly-$\ast$ if $\alpha = \infty$), while   $f_{l_j} |\omega \to \bar{f}|_{\omega}$ strongly in $L^\infty(0,T;L^{\alpha'}(\omega))$ from Part (a). Hence, we are in the condition to pass to the limit in the last integral of the right-hand side of \eqref{eq:proof-lemma-spliting-int-fg}. This transition yields:
\begin{equation}\label{eq:I-have-no-more-names}
    \lim_{j \to \infty }
    \int_0^T 
        \int_{\omega}
        \phi 
        \bar{f}_{l_{j}}
        \tilde 
        g_{j}
    = 
    \theta
    \int_0^T 
    \int_{\omega}
    \phi 
    \bar{f} 
    g
    = 
    \theta
    \int_0^T 
    \int_{\Omega}
    \phi 
    \bar{f} 
    g.
\end{equation}
Next, we show that the first integral on the right-hand side of \eqref{eq:proof-lemma-spliting-int-fg} converges towards $0$ as $j \to \infty$. 
Indeed, 
\begin{equation}\label{est:no-idea-how-to-call-it}
\begin{aligned}
    \bigg|
        \int_0^T 
        \int_{\mathcal{C}_{l_j}^*}
        \phi 
        \big(
        f_{l_{j}} 
        -
        \bar{f}_{l_{j}}
        \big)
        \tilde 
        g_j
    \bigg|
    &\leq 
    \| \phi \|_{L^\infty((0,T)\times \mathcal{C}_{l_j}^*)}
    \big\|
        f_{l_{j}} 
        -
        \bar{f}_{l_{j}}
    \big\|_{L^\infty(0,T ;L^{\alpha'}(\mathcal{C}_{l_j}^*))}
    \| \tilde 
        g_j 
    \|_{L^{1}(0,T;L^\alpha(\mathcal{C}_{l_j}^*))} \\
    &\leq 
    \| \phi \|_{L^\infty((0,T)\times \Omega)}
    \big\|
        f_{l_{j}} 
        -
        \bar{f}_{l_{j}}
    \big\|_{L^\infty(0,T ;L^{\alpha'}(\mathcal{C}_{l_j}^*))}
    \| \tilde 
        g_j 
    \|_{L^{1}(0,T;L^\alpha(\Omega))}.
\end{aligned}    
\end{equation}
To demonstrate that the right-hand side converges to $0$, it suffices to establish that $\|f_{l_j} - \bar{f}_{l_j}\|_{L^\infty(0,T; L^{\alpha'}(\mathcal{C}_{l_j}^*))}\to 0$, as $j \to \infty$. Recalling that $\alpha \in (q', \infty]\Rightarrow \alpha' \in [1, q)$, we apply a standard interpolation inequality between $L^1(\mathcal{C}_{l_j}^*)$ and $L^q(\mathcal{C}_{l_j}^*)$ to gather
\begin{equation*}
\begin{aligned}
    \|f_{l_j} &- \bar{f}_{l_j}\|_{L^\infty(0,T; L^{\alpha'}(\mathcal{C}_{l_j}^*))}
    \leq 
    \big\|
        f_{l_{j}} 
        -
        \bar{f}_{l_{j}}
    \big\|_{L^\infty(0,T ;L^1(\mathcal{C}_{l_j}^*))}^{1-\frac{q'}{\alpha}}
    \big\|
        f_{l_{j}} 
        -
        \bar{f}_{l_{j}}
    \big\|_{L^\infty(0,T ;L^q(\mathcal{C}_{l_j}^*))}^{\frac{q'}{\alpha}}
   \\
   &\leq 
    \Big( 
        |\mathcal{C}_{l_j}^*|^{1-\frac{1}{s}}
        \big\|
            f_{l_{j}} 
            -
            \bar{f}_{l_{j}}
        \big\|_{L^\infty(0,T ;L^s(\mathcal{C}_{l_j}^*))}
    \Big)^{1-\frac{q'}{\alpha}}
    \Big(
    \big\|
        f_{l_{j}} 
    \big\|_{L^\infty(0,T ;L^q(\mathcal{C}_{l_j}^*))}
    +
    \big\|
        \bar{f}_{l_{j}}
    \big\|_{L^\infty(0,T ;L^q(\Omega))}
    \Big)^{\frac{q'}{\alpha}}
\end{aligned}
\end{equation*}
Invoking the uniform estimates of \eqref{est:Thm-Aubin-Lions-uniform-estimates} and \eqref{est:barfl-Lq}, we obtain
\begin{equation}\label{ineq:lemma-second-term-vanishing-1}
\begin{aligned}    
   \|f_{l_j} - \bar{f}_{l_j}\|_{L^\infty(0,T; L^{\alpha'}(\mathcal{C}_{l_j}^*))}
    &\leq 
    \bigg(
        1+\frac{1}{\theta^\frac{1}{q}}
    \bigg)^{\frac{q'}{\alpha}}
    \mathbf C^{\frac{q'}{\alpha}}
    \big| \Omega \big|^{(1-\frac{1}{s} )(1-\frac{q'}{\alpha})}
    \big\|
        f_{l_{j}} 
        -
        \bar{f}_{l_{j}}
    \big\|_{L^\infty(0,T ;L^s(\mathcal{C}_{l_j}^*))}^{1-\frac{q'}{\alpha}}.
\end{aligned}    
\end{equation}
 Since $1-q'/\alpha >0$, it is enough to show that $\|f_{l_j} - \bar{f}_{l_j}\|_{L^\infty(0,T; L^s(\mathcal{C}_{l_j}^*))}\to 0$, as $j \to \infty$. To this end
\begin{align*}
    \| f_{l_j}- \bar{f}_{l_j} \|_{L^\infty(0,T;L^s(\mathcal C_{l_j}^*))}
    &
    =
    \EssSup_{t \in (0,T)}
    \bigg(
    \int_{\mathcal C_{l_j}^*} 
    \Big|
         f_{l_j}(t,x)
         -
        \sum_{k \in \mathcal K}
        \mathbbm{1}_{Y_k^{\ee_{l_j}}}(x)
        \fint_{Y_k^{*,\ee_l}}
         f_{l_j}(t,\tilde x) 
         d\tilde x
    \Big|^s 
    dx
    \bigg)^\frac{1}{s}.
\end{align*}
Hence, observing that $\mathbbm{1}_{Y_k^{\ee_{l_j}}}(x) = \mathbbm{1}_{Y_k^{\ee_{l_j},*}}(x)$ for any $x \in C_{l_j}^*$, we can recast the last term into
\begin{align*}
     \| f_{l_j}- \bar{f}_{l_j} \|_{L^\infty(0,T;L^s(\mathcal C_{l_j}^*))}
    &
    =
    \EssSup_{t \in (0,T)}
    \bigg(
    \sum_{k \in \mathcal K}
    \int_{Y_k^{*,\ee_{l_j}}}
    \Big|
        f_{l_j}(t,x)
         -
        \fint_{Y_k^{*,\ee_l}}
         f_{l_j}(t,\tilde x) d\tilde x
    \Big|^s
    dx
    \bigg)^\frac{1}{s}.
\end{align*}
We hence invoke the Poincar\'e-Wirtinger inequality of \Cref{lemma:Poincare-Wirtinger-inequality} for functions in $W^{1,s}(Y^{*,\ee_{l_j}}_{k})$. There exists a constant $ C_{s,Y^*}>0$ depending only on $s$ and $Y^*$ such that
\begin{equation}\label{ineq:lemma-second-term-vanishing-2}
\begin{aligned}
     \| f_{l_j}- \bar{f}_{l_j} \|_{L^\infty(0,T;L^s(\mathcal C_{l_j}^*))}
    &
    \leq 
    \EssSup_{t \in (0,T)}
    \bigg(
    \ee^{s}_{l_j} C_{s,Y^*}^s 
    \sum_{k \in \mathcal K}
    \int_{Y_k^{*,\ee_l}}
    \big|
        \nabla f_{l_j}(t,x)
    \big|^s
    dx
    \bigg)^\frac{1}{s}
    \\
    &
    \leq  
    \ee_{l_j} C_{s,Y^*}  \|\nabla f_{l_j}  \|_{L^\infty(0,T;L^s(\Omega_{{l_j}}))}
    \leq 
    C_{s,Y^*} \mathbf C \,\ee_{l_j},
\end{aligned}
\end{equation}
thanks to the uniform estimates of \eqref{est:Thm-Aubin-Lions-uniform-estimates}. Since $\ee_{l_j}\to 0$, we deduce that $ \| f_{l_j}- \bar{f}_{l_j} \|_{L^\infty(0,T;L^s(\mathcal C_{l_j}^*))}\to  0$, as $j\to \infty$. Coupling this result with \eqref{est:no-idea-how-to-call-it} and \eqref{ineq:lemma-second-term-vanishing-1}, we obtain that 
\begin{equation*}
    \lim_{j\to \infty}\int_0^T 
        \int_{\mathcal{C}^*_{l_j}}
        \phi 
        \big(
        \tilde 
        f_{l_{j}} 
        -
        \bar{f}_{l_{j}}
        \big)
        \tilde 
        g_j = 0,
\end{equation*}
which together with \eqref{eq:proof-lemma-spliting-int-fg} and  \eqref{eq:I-have-no-more-names}  implies 
\begin{equation}\label{eq:lemma-conv-towards-fbar}
    \lim_{j\to \infty}
    \int_0^T 
    \int_{\Omega_{l_j}}
      \phi 
      f_{l_{j}}
      g_{j} 
    =
    \theta
    \int_0^T 
    \int_{\Omega}
    \phi 
    \bar f
    g
\end{equation}
for any $\phi \in \mathcal{C}^\infty_c([0,T]\times \Omega)$. To conclude the proof of \eqref{eq:lemma-compactness}, we need to show that, in reality, the functions $\bar f $ and $f$ coincide in $(0,T)\times \Omega$. 
To this end, we consider the constant functions $g_j :=1 \in L^\infty((0,T)\times \Omega_{l_j})$, for any $j\in \mathbb N$. Hence $\tilde g_j\equiv  \mathbbm{1}_{\Omega_{l_j}}\stackrel{\ast}{\rightharpoonup} \theta \cdot 1 =: \theta g $ in $L^\infty((0,T)\times  \Omega)$ and thanks to \eqref{eq:lemma-conv-towards-fbar}
\begin{equation*}
    \lim_{j\to \infty}
    \int_0^T 
    \int_{\Omega_{l_j}}
    \phi 
    f_{l_j}
    g_{j}
    =
    \theta
    \int_0^T 
    \int_{\Omega}
    \phi
    \bar f.
\end{equation*}
Simultaneously, we recall that $\tilde f_{l_j} \stackrel{\ast}{\rightharpoonup}  \theta f$ in $L^\infty(0,T;L^q(\Omega))$, therefore
\begin{equation*}
    \theta \int_0^T 
    \int_{\Omega}
    \phi
    f
    =
    \lim_{j \to \infty}
    \int_0^T 
    \int_{\Omega}
    \phi
    \tilde f_{l_j}
    =
    \lim_{j\to \infty}
    \int_0^T 
    \int_{\Omega_{l_j}}
    \phi 
    f_{l_j}
    g_{j}
    =
    \theta
    \int_0^T 
    \int_{\Omega}
    \phi
    \bar f.
\end{equation*}
From the arbitrariness of $\phi \in \mathcal{C}^\infty_c([0,T]\times \Omega)$, we conclude that $\bar{f}$ and $f$ are indeed the same function.

\medskip 
\noindent
{\it \underline{Part (c)}}
The proof of \eqref{eq:lemma-compactness-non-linear} employs a similar approach to that used in Part (b). Notably, for any $m \in \mathbb{N}$,
\begin{equation}\label{eq:f^m-proof}
\begin{aligned}
    \int_0^T 
    \int_{\Omega_{l_j}}
    \phi 
    f_{l_j}^m
    g_{j} 
    &=
     \int_0^T 
    \int_{\Omega_{l_j}}
    \phi 
    \big( f_{l_j}-\bar f_{l_j}+ \bar f_{l_j}\big)^m
    g_{j} 
    =
    \sum_{n=0}^m
    \binom{m}{n}
    \int_0^T 
    \int_{\Omega_{l_j}}
    \phi 
    (
    f_{l_j}
    -
    \bar 
    f_{l_j}
    )^n
    \bar 
    f_{l_j}^{m-n}
    g_j  
    \\
    &= 
    \int_0^T 
    \int_{\omega}
    \phi 
    \bar
    f_{l_j}^m
    \tilde 
    g_j  
    +
    \sum_{n=1}^m
    \binom{m}{n}
    \int_0^T 
    \int_{\mathcal{C}_{l_j}^*}
    \phi 
    (
    f_{l_j}
    -
    \bar 
    f_{l_j}
    )^n
    \bar 
    f_{l_j}^{m-n}
    \tilde 
    g_j ,
\end{aligned}
\end{equation}
where we use the fact that $\supp(\phi) \subset [0,T]\times \omega$, $\tilde g \equiv 0$ in $[0,T] \times (\omega \setminus \Omega_{l_j})$ and $\omega  \subseteq \mathcal{C}_{l_j}^*$. Since $\bar f_{l_j}|\omega$   strongly converges to $\bar f|_{\omega} = f|_{\omega}$ in $L^\infty(0,T;L^{\beta}(\omega))$ for any $ \beta \in [1, \infty)$,  $\bar f_{l_j}^m|\omega$  also converges strongly towards $ f^m|_{\omega}$ in $L^\infty(0,T;L^{\alpha'}(\omega))$ with $ \alpha' \in [1, \infty)$. Given the weak convergence of $\tilde{g}_j \rightharpoonup \theta g$ weakly in $L^{1}(0,T; L^\alpha(\Omega))$, we deduce 
\begin{equation*}
    \lim_{j\to \infty} 
    \int_0^T 
    \int_{\omega}
    \phi 
    \bar
    f_{l_j}^m
    \tilde 
    g_j  
    =
    \theta
    \int_0^T 
    \int_{\omega}
    \phi 
    f^m
    g .
\end{equation*}
Furthermore, the sum for $n = 1,\dots, m$ on the right-hand side of \eqref{eq:f^m-proof} vanishes as $j \to \infty$. Indeed, we have
\begin{align*}
    \bigg| \int_0^T 
    \int_{\mathcal{C}_{l_j}^*}
    \phi 
    (
    \tilde 
    f_{l_j}
    -
    \bar 
    f_{l_j}
    )^n
    \bar 
    f_{l_j}^{m-n}
    \tilde 
    g_j\bigg|
    \leq 
    \| \phi \|_{L^\infty((0,T)\times \Omega)} 
    \| f_{l_j}
    -
    \bar 
    f_{l_j}
    \|_{L^\infty(0,T; L^{\alpha'}(\mathcal{C}_{l_j}))}^n 
    \| \bar  f_{l_j} \|_{L^\infty((0,T)\times \Omega)}^{m-n}
    \| \tilde g_j \|_{L^1(0,T;L^\alpha(\Omega))}.
\end{align*}
The sequence $(\tilde g_j)_{j \in \mathbb N}$ is uniformly bounded in $L^1(0,T;L^\alpha(\Omega))$, while the assumptions of \Cref{thm:Aubin-Lions-Linfty} implies that $(\bar  f_{l_j} )_{j \in \mathbb N}$ is uniformly bounded in $L^\infty((0,T)\times \Omega)$. Finally, we recall that we have shown in Part (a) that the norms $ \| f_{l_j}
- \bar  f_{l_j} \|_{L^\infty(0,T; L^{\alpha'}(\mathcal{C}_{l_j}^*))} \to 0$ as $j \to \infty$.

\medskip
\noindent 
{\it \underline{Part (d)}:}  
we show that the sequence $(\bar{f}_l)_{l \in \mathbb N}$ defined in \eqref{eq:def:barfl} is precompact in $\mathcal{C}([0,T], L^s(\omega))$. We make use of the Arzel\'a-Ascoli Theorem for Banach-value functions, which requires the following properties:
\begin{enumerate}[(i)]
    \item the sequence  $(\bar{f}_l|_\omega)_{l\in \mathbb N} $ is uniformly bounded in $L^\infty(0,T;L^s(\omega))$,
    \item the sequence $(\bar{f}_l|_\omega)_{l\in \mathbb N} $ is equicontinuous (in time) in $\mathcal{C}([0,T], L^s(\omega))$,
    \item the sequence $(\bar{f}_l(t,\cdot)|_\omega)_{l\in \mathbb N} $ is precompact in $L^s(\omega)$, for any time $t \in [0,T]$.
\end{enumerate}
Without loss of generality, we determine that the aforementioned properties hold for $l \in \mathbb{N}$ with $l \geq L$ sufficiently large. This is because for $l < L$, we are dealing with a finite set of functions, which is trivially precompact. For $l \geq L \in \mathbb{N}$ (ensuring $\ee_l$ is sufficiently small), we have that $\omega + B(x, 10\ee_l) \subset \Omega$. This ensures that any cell $Y_k^{\ee_l}$ intersecting $\omega$ will have its adjacent cells also included in $\mathcal{C}_l$.

\smallskip 
\noindent 
The relation described by $(i)$ stems from the estimate \eqref{est:barfl-Lq}, i.e.~from the uniform boundedness of the sequence $(\bar{f}_l)_{l\in \mathbb{N}}$ in $L^\infty(0,T; L^q(\Omega))\hookrightarrow L^\infty(0,T; L^s(\Omega))$. Turning our attention to the equicontinuity in $(ii)$, we first remark that each function $\bar{f}_l$ is in $W^{1, r}(0,T;L^s(\Omega))$ with $\partial_t \bar{f}_l = \overline{\partial_t f_l}$, namely
\begin{equation}\label{eq:def:dtbarfl}
    \partial_t \bar{f}_l(t,x):= 
    \sum_{k \in \mathcal{K}}
    \mathbbm{1}_{Y_k^{\ee_l}}(x)
    \bigg(
        \fint_{Y_k^{*,\ee_l}}
        \partial_t f_l(t,\tilde x) d\tilde x
    \bigg),\quad \text{a.e.}\quad (t,x) \in [0,T]\times \Omega.
\end{equation}
Indeed, for any $\psi \in \mathcal{C}^\infty_c(0,T)$, the following identities between Bochner integrals are satisfies
\begin{equation}
\begin{aligned}
    \int_0^T \psi'(t) \bar{f}_l(t, \cdot ) dt 
    &= 
    \sum_{k \in \mathcal K} \frac{1}{|Y_k^{*,\ee_l}|} \mathbbm{1}_{Y_k^{\ee_l}}(\cdot )
    \int_0^T \int_{Y_k^{*,\ee_l}} \psi'(t) f_l(t,\tilde x) d\tilde xdt \\
    &=
    -
    \sum_{k \in \mathcal K} \frac{1}{|Y_k^{*,\ee_l}|} \mathbbm{1}_{Y_k^{\ee_l}}(\cdot )
    \int_0^T \int_{Y_k^{*,\ee_l}} \psi(t) \partial_t f_l(t,\tilde x) d\tilde x dt
    = 
    -
    \int_0^T \psi(t) \overline{\partial_t f}_l(t, \cdot ) dt .
\end{aligned}
\end{equation}
Moreover, we recall that the representative of $f_l$ is considered in $\mathcal{C}([0,T], L^s(\omega))$; therefore we have that
\begin{equation*}
\begin{aligned}
    \| \bar{f}_l(t_2, \cdot ) - \bar{f}_l(t_1, \cdot ) \|_{L^s(\omega)}
    &= 
    \Big\| 
        \int_{t_1}^{t_2} \partial_t\bar{f}_l(t, \cdot )dt  
    \Big\|_{L^s(\omega)}dt 
    \leq 
    \int_{t_1}^{t_2} 
    \| \partial_t\bar{f}_l(t, \cdot ) \|_{L^s(\omega)}dt
    = 
    \int_{t_1}^{t_2} 
    \| \overline{\partial_t f_l}(t, \cdot ) \|_{L^s(\omega)}dt
\end{aligned}
\end{equation*}
for any $0\leq t_1\leq  t_2 \leq T $. We next infer that a similar inequality to the one in \eqref{est:barfl-Lq} ensures that, for a.e.~$t\in (0,T)$, $\| \overline{\partial_t f_l}(t, \cdot ) \|_{L^s(\omega)} \leq \frac{1}{\theta^\frac{1}{s}} 
\| \partial_t f_l(t, \cdot ) \|_{L^s(\Omega_l)}$. Thus
\begin{equation*}
\begin{aligned}
    \| \bar{f}_l(t_2, \cdot ) - \bar{f}_l(t_1, \cdot ) \|_{L^s(\omega)}
    &\leq  
    \frac{1}{\theta^\frac{1}{s}}
    \int_{t_1}^{t_2} 
    \|  \partial_t \bar f_l(t, \cdot ) \|_{L^s(\Omega_l)}dt\\
    &\leq 
    \frac{1}{\theta^\frac{1}{s}}
    |t_2-t_1|^{1-\frac{1}{r}} \|  \partial_t f_l \|_{L^r (0,T;L^s (\Omega_l))} \leq 
    \frac{\mathbf C}{\theta^\frac{1}{s}} |t_2-t_1|^{1-\frac{1}{r}},
\end{aligned}
\end{equation*}
where in the last step we have used the uniform estimate from \eqref{est:Thm-Aubin-Lions-uniform-estimates}. The last inequality implies in particular the equicontinuity described in $(ii)$. 

\noindent 
To apply the Arzel\`a-Ascoli Theorem, we now focus on establishing $(iii)$, which states that the sequence $(f_l(t,\cdot))_{l\in \mathbb{N}}$ is precompact in $L^s(\omega)$, for any fixed $t \in [0,T]$. 
Following a methodology similar to that in \cite{MR1225439}, we employ the Kolmogorov criterion, which relates the precompactness to the following uniform convergence:
\begin{equation*}
    \lim_{h \to 0}  \| f_l(t, \cdot + h e^n ) - f_l(t, \cdot) \|_{L^s (\omega)} = 0,\quad 
    \text{uniformly in}\quad l \in \mathbb N,
\end{equation*}
for any element $e^n$ of the canonical basis $\{ e^1, \, e^2 \}$ in $\mathbb{R}^2$. 

\noindent 
Let $\bar{h}>0$ be the largest positive value such that $x + h e^n$ belongs to $\mathcal{C}_l$, for any $x\in \omega$ and $h \in [0, \bar{h})$. 
We consider two contiguous cell $Y_k^{\ee_l}$ and $Y_j^{\ee_l}$ such that, $Y_k^{\ee_l}\cap \omega \neq \emptyset$ and $Y_j^{\ee_l} = 
\{ x+ \ee_l e^n,\,|\,x\in Y_k^{\ee_l}\}$. Remark that from our original assumption of $l \geq L$, the cell $Y_{j}^{\ee_l}$ is still in $ \mathcal{C}_l$ (but maybe not in $\omega$). In particular the function $\bar{f}_l(t,x) = \fint_{Y^{*, \ee_l}_j} f_l (t, \tilde x) d\tilde x$, for any $x \in Y_{j}^{\ee_l}$. Moreover, for a fixed time $t \in [0,T]$, we remark that
\begin{align*}
    \big|
        \bar{f}_l (t, x+ \ee_l e^n)
        -
        \bar{f}_l (t, x)
    \big|
    &= 
    \bigg|
        \fint_{Y_j^{*,\ee_l}}
        f_l(t,\tilde x) d\tilde x
        -
        \fint_{Y_k^{*,\ee_l}}
        f_l(t,\tilde x) d\tilde x
    \bigg|, \quad \forall x\in \mathcal C_l \quad \text{such that}\quad  x+ \ee_l e^n \in \mathcal{C}_l.
\end{align*}
In particular, the above identity holds true for any $x \in \omega$. Hence we assert the existence of a constant $\tilde C_s > 0$, dependent solely on $s \geq 1$ and the unit cell $Y^*$, satisfying the following inequality:
\begin{equation}\label{eq:Poincar-Wirtinger-to-be-cited-at-the-appendix}
    \begin{aligned}
    \big|
        \bar{f}_l (t, x+ \ee_l e^n)
        -
        \bar{f}_l (t, x)
    \big|
    &\leq 
    \tilde C_s \ee_l^{1-\frac{2}{s}} 
    \| \nabla f_l (t, \cdot ) \|_{L^s(Y_k^{*,\ee_l}\cup Y_j^{*,\ee_l} )}.
\end{aligned}
\end{equation}
We postpone the proof of this inequality to \Cref{lemma:mean-in-Z}. For a general $h \in [0, \bar{h})$, we have that $h \leq   \ee_l$ or $h >  \ee_l$. If $h \leq  \ee_l$, 
we introduce the sets $\mycal{A}_{l,k,h},\,\mycal{B}_{l,k,h}\subset Y^{\ee_l}_k$  
\begin{equation*}
\begin{aligned}
    \mycal{A}_{l,k,h} &= 
    \{x \in Y^{\ee_k}_k\text{ such that } -\ee_l/2  \leq  (x-x_k^{\ee_l} )\cdot e^n <  \ee_l/2-h\},\\
    \mycal{B}_{l,k,h} &= 
    \{x \in Y^{\ee_k}_k\text{ such that }  \ee_l/2-h \leq  (x-x_k^{\ee_l})\cdot e^n \leq  \ee_l/2\}.
\end{aligned}
\end{equation*}
Since $\bar{f}_l $ is constant in each cell, we obtain that
\begin{equation*}
    \left\{
    \begin{alignedat}{4}
        &|\bar{f}_l (t, x+h e^n) - \bar{f}_l (t, x) |
        = 0
        &&\text{for a.e. }x\in \mycal{A}_{l,k,h}
        ,
        \qquad 
        \\
        &|\bar{f}_l (t, x+h e^n ) - \bar{f}_l (t, x  ) |
        \leq 
        \tilde C_s\ee_l^{1-\frac{2}{s}}
        \| \nabla f_l (t, \cdot ) \|_{L^s(Y_k^{*,\ee_l}\cup Y_j^{*,\ee_l} )}
        \quad 
        &&\text{for a.e. }x\in \mycal{B}_{l,k,h}.
    \end{alignedat}
    \right.
\end{equation*}
Hence, remarking that $|\mycal{B}_{l,k,h}| =\ee_l \cdot h$, we obtain that
\begin{equation*}
\begin{aligned}
    \int_\omega 
    \big|
        \bar{f}_l (t, x+ h e^n)
        -
        \bar{f}_l (t, x)
    \big|^s 
    dx
    &= 
    \sum_{k\in \mathcal{K}} 
    \int_{\omega \cap  Y^{  \ee_l}_k}
    \big|
        \bar{f}_l (t, x+ h e^n)
        -
        \bar{f}_l (t, x)
    \big|^s
    dx
    \\
    &= 
    \sum_{k\in \mathcal{K}} 
    \int_{\omega \cap  \mycal{B}_{l,k,h}}
    \big|
        \bar{f}_l (t, x+ h e^n)
        -
        \bar{f}_l (t, x)
    \big|^s
    dx
    \\
    &\leq 
    \sum_{k \in \mathcal{K}} 
    | \mycal{B}_{l,k,h} |
    \tilde C_s^s \ee_l^{s-2}
    \| \nabla f_l (t, \cdot) \|_{L^s(Y_k^{*,\ee_l}\cup Y_j^{*,\ee_l} )}^s\\
    &\leq 
    \tilde C_s^s \ee_l^{s-1}h
    2\| \nabla f_l (t, \cdot) \|_{L^s(\Omega_l)}^s.
\end{aligned}
\end{equation*}
Denoting $C_s = 2^{1/s}\tilde C_s$, this in particular implies that for any $0\leq h \leq \ee_l$
\begin{equation}\label{est:first-estimate-h-aubin-lions}
\begin{aligned}
     \|  
        \bar{f}_l (t, \cdot + h e^n) -  \bar{f}_l (t, \cdot )
    \|_{L^s(\omega)}
    \leq 
    C_s \ee_l^{1-\frac{1}{s}}  h^\frac{1}{s}
    \| \nabla f_l (t, \cdot ) \|_{L^s(\Omega_l)}.
\end{aligned}
\end{equation}
We shall here remark that the terms on the right-hand side do not depend on the domain $\omega$. We make use of this aspect in the case of $h> \ee_l$. Indeed there exists $M\in \mathbb N$ and $0\leq h'<\ee_l$ such that $h = M\ee_l + h'$. 
We obtain 
\begin{equation*}
   \begin{aligned} 
    \|  
        &\bar{f}_l (t, \cdot + h e^n) -  \bar{f}_l (t, \cdot )
    \|_{L^s(\omega)} \\
    &= 
    \bigg\|  
        \bar{f}_k (t, \cdot + (M  \ee_l + h') e^n) -  \bar{f}_k (t, \cdot + M\ee_l e^n)+ 
        \sum_{ m = 1 }^M   \bar{f}_l (t, \cdot +  m \ee_l e^n) -  \bar{f}_l (t, \cdot +  (m-1) \ee_l e^n)
    \bigg\|_{L^s(\omega)} \\
    &\leq 
    \|  
        \bar{f}_k (t, \cdot + (M  \ee_l + h') e^n) -  \bar{f}_k (t, \cdot + M\ee_l e^n)
    \|_{L^s(\omega)}
    +
    \sum_{ m = 1 }^M 
    \|  
        \bar{f}_l (t, \cdot +  m \ee_l e^n) -  \bar{f}_l (t, \cdot +  (m-1) \ee_l e^n)
    \|_{L^p(\omega)}\\
    &\leq 
    \|  
        \bar{f}_k (t, \cdot + h') -  \bar{f}_k (t, \cdot )
    \|_{L^s(\omega + M  \ee_l e^n)}
    +
    \sum_{ m = 1 }^M 
    \|  
        \bar{f}_l (t, \cdot + \ee_l e^n) -  \bar{f}_l (t, \cdot)
    \|_{L^p(\omega+  (m-1)\ee_l e^n)}.
   \end{aligned}
\end{equation*}
Hence, applying \eqref{est:first-estimate-h-aubin-lions} with $\omega$ replaced by each domain $\omega+  \tilde m \ee_l e^n$, $\tilde m = 0,\dots,M$, we finally obtain
\begin{equation*}
   \begin{aligned} 
    \|  
        \bar{f}_l (t, \cdot + h e^n) -  \bar{f}_l (t, \cdot )
    \|_{L^s(\omega)} 
    &\leq 
    C_s
    \| \nabla f_l (t, \cdot) \|_{L^s(\Omega_l)}
    \bigg\{
    \ee_l^{1-\frac{1}{s}}(h')^\frac{1}{s}
    +
    \sum_{ m = 1 }^M 
     \ee_l^{1-\frac{1}{s}} \ee_l^\frac{1}{s}
     \bigg\}
     \\
    &\leq 
    C_s
    \| \nabla f_l  (t, \cdot) \|_{L^s(\Omega_l)}
    \Big\{
      h^{1-\frac{1}{s}}h ^\frac{1}{s}
     +
     M\ee_l
     \Big\}\\
    &\leq 
    C_s
    \| \nabla f_l (t, \cdot) \|_{L^s(\Omega_l)}
    2h.
    \end{aligned}
\end{equation*}
We hence invoke the uniform estimates in \eqref{est:Thm-Aubin-Lions-uniform-estimates} and recall that $(\ee_l)_{l\in \mathbb N}$ decrease monotonically. Thus
\begin{equation}\label{est:at-the-end-of-the-proof-of-AL}
\begin{alignedat}{4}
    \|  
        \bar{f}_l (t, \cdot + h e^n) -  \bar{f}_l (t, \cdot )
    \|_{L^s(\omega)}
    &\leq 
    \mathbf C C_s \ee_l^{1-\frac{1}{s}}  h^\frac{1}{s}
    \qquad
    &&\text{for any }h \in [0, \ee_l],
    \\
     \|  
        \bar{f}_l (t, \cdot + h e^n) -  \bar{f}_l (t, \cdot )
    \|_{L^s(\omega)}
    &\leq 
    2\mathbf C C_s h,
    \quad 
    &&\text{for any }h \in [\ee_l, \bar h).
\end{alignedat}
\end{equation}
for almost any $t \in (0,T)$. Thanks to the Kolmogorov criterion $(\bar{f}_l|_\omega(t, \cdot ))_{k\in \mathbb N}$ is precompact in $L^s(\omega)$ for almost any time $t\in (0,T)$. Since $\bar{f}_l|_\omega$ forms an equicontinouous sequence in  $\mathcal{C}([0,T],L^s(\omega))$ from part (ii), 
then $(\bar{f}_l|_\omega(t, \cdot ))_{k\in \mathbb N}$ is precompact in $L^s(\omega)$ for any time $t\in [0,T]$.

\medskip 
\noindent 
{\it \underline{Part (e)}} We conclude our proof by showing that $\nabla f\in L^\infty(0,T;L^s(\Omega)) $ and $\partial_t f \in L^r(0,T;L^s(\Omega))$, if $s>1$.  We recall that $\bar{f}_{l_j}|_{\omega} \to f|_{\omega} $ strongly in $\mathcal{C}([0,T], L^s(\omega))$, for any $\omega \ssubset \Omega$. Sending $\ee_{l_j}\to 0$ in \eqref{est:at-the-end-of-the-proof-of-AL}, we thus remark that 
\begin{equation*}
     \|  
        f(t, \cdot + h e^n) -  f(t, \cdot )
    \|_{L^s(\omega)}
    \leq 
    2\mathbf C C_s h,
    \quad 
    \text{for any }h \in [0, \bar h),\quad
    \text{any }\omega \ssubset \Omega\quad \text{and a.e.~}t\in(0,T).
\end{equation*}
The above relation implies in particular that $f(t, \cdot)$ belongs to $W^{1,s}(\Omega))$ and satisfies $\| \nabla f \|_{L^s(\Omega)} \leq  2\mathbf C C_s$, for a.e.~$t \in (0,T)$ (cf.~for instance Proposition 9.3 in \cite{MR2759829}). 
Finally, $\partial_t f$ belongs to $L^r(0,T;L^s(\Omega))$ since the sequence $(\bar{f}_{l_j})_{l\in \mathbb N}$ is uniformly bounded in $W^{1,r}(0,T;L^s(\Omega))$. From \eqref{eq:def:dtbarfl} and an argument of weak convergence, we have therefore
\begin{equation*}
     \| \partial_t f \|_{L^r(0,T;L^s(\Omega))}
     \leq 
     \liminf_{j\to \infty}
     \| \partial_t \bar{f}_{l_j} \|_{L^r(0,T; L^s(\Omega))} \leq 
     \liminf_{j\to \infty}
     \frac{1}{\theta^{\frac{1}{s}}}
     \| \partial_t f_{l_j} \|_{L^r(0,T; L^s(\Omega_{l_j}))} \leq \frac{1}{\theta^{\frac{1}{s}}} \mathbf C.
\end{equation*}
This concludes the proof of the \Cref{lemma:compactness_Omega_eps} and \Cref{thm:Aubin-Lions-Linfty}.
\end{proof}
\section{The extension of the pressure}\label{sec:pressure}
\noindent 
We begin our analysis by addressing the family of pressures $\pre_\ee \in L^2(0,T; \lbar^2_0(\Omega_\ee))$. In this section we introduce an appropriate ``extension'' $P_\ee$ (although, more precisely, $\nabla P_\ee$ extends $\nabla \pre_\ee$), which satisfies some uniform estimates in $L^2((0,T), \lbar^p_0(\Omega))$, for any $p\in [1,2)$. We build $P_\ee$ implicitly with a methodology reminiscent of Allaire's approach in \cite{MR1079189, MR1079190}, wherein $p=2$ was assumed. However, diverging from Allaire's framework, the term $(\nabla d_\ee)^T \Delta d_\ee$ on the right-hand side~of the Stokes equation \eqref{maineq1} is uniformly bounded only in $L^1((0,T)\times \Omega_\ee)$. Despite this weak norm, we transpose these estimates to the pressure $P_\ee$  using a norm that is less regular than the one used by Allaire, reflecting our assumption $p<2$.

\noindent 
Invoking the operator $\mathcal{R}_{q,\ee}$ of Lemma \ref{lemma:restriction_operator}, we determine $P_\ee$ through the following Proposition. 
\begin{prop}\label{prop:pressure}
    Consider the family of pressures $\pre_\ee \in L^2(0,T;\lbar^2_0(\Omega_\ee))$ of the Stokes equation \eqref{maineq1}. There exists a family of pressures $P_\ee:(0,T)\times \Omega\to \mathbb R$ which are in $L^2(0,T; \lbar^p_0(\Omega))$, for any $p\in (1,2)$, satisfying 
    $\nabla (P_{\ee|\Omega_\ee}) = \nabla \pre_\ee$ in $L^2(0,T;W^{-1, p}(\Omega_\ee))$, for any $\ee>0$. Additionally, the following estimate holds true
    \begin{equation}\label{est:Pee-proposition}
        \| P_\ee  \|_{L^2(0,T;L^p(\Omega))}
        \leq \frac{D_{p, \Omega}\mathcal{E}_0^\ee(1+\mathcal{E}_0^\ee)}{\ee}
        ,
    \end{equation} 
    for a suitable positive constant $D_{p, \Omega}$ that depends only on the exponent $p\in (1,2)$ and the domain $\Omega$. Moreover
	\begin{equation}\label{eq:dual-form-of-Pe-and-pe}
		\int_{0}^T\int_{\Omega}	P_\ee(t,x)\Div\, \psi(t,x) dxdt= 
		\int_{0}^T\int_{\Omega_\ee}	\pre_\ee(t,x) \Div\, \mathcal{R}_{p',\ee} \psi(t,x)dxdt.
	\end{equation}	 
    for any $\psi \in L^2(0,T;W^{1,p'}_0(\Omega))$, where $p'= p/(p-1)\in (2, \infty)$.
\end{prop}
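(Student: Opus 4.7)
The plan is to characterize $P_\ee$ implicitly as a Riesz-type dual of a linear functional built from the weak formulation of \eqref{maineq1}, along the lines of Allaire~\cite{MR1079189}. The novelty with respect to Allaire's $p=2$ setting is that the forcing $(\nabla d_\ee)^T\Delta d_\ee$ is uniformly bounded only in $L^2(0,T;L^1(\Omega_\ee))$ (cf.~\Cref{rmk:pressure}), which will force the use of the two-dimensional Sobolev embedding $W^{1,p'}_0(\Omega)\hookrightarrow L^\infty(\Omega)$ (valid only for $p'>2$), and hence produce a pressure in $L^p$ with $p<2$ only.

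First, for $\psi\in L^2(0,T;W^{1,p'}_0(\Omega))$, I would set
\begin{equation*}
    \Phi_\ee(\psi):=\int_0^T\!\!\int_{\Omega_\ee}\Big(
        \nabla u_\ee:\nabla \mathcal{R}_{p',\ee}\psi
        +(\nabla d_\ee)^T\Delta d_\ee\cdot \mathcal{R}_{p',\ee}\psi
        -F_\ee\cdot \mathcal{R}_{p',\ee}\psi
        -\nabla H_\ee:\nabla \mathcal{R}_{p',\ee}\psi\Big).
\end{equation*}
Testing \eqref{maineq1} against $\mathcal{R}_{p',\ee}\psi\in L^2(0,T;W^{1,p'}_0(\Omega_\ee))$ and integrating by parts gives $\Phi_\ee(\psi)=\int_0^T\!\int_{\Omega_\ee}\pre_\ee\,\Div\,\mathcal{R}_{p',\ee}\psi$; in particular, by the third property of \Cref{lemma:restriction_operator}, one has $\Phi_\ee(\psi)=0$ whenever $\Div\,\psi=0$.

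The heart of the proof is a continuity bound for $\Phi_\ee$ in terms of $\|\nabla\psi\|_{L^2(0,T;L^{p'}(\Omega))}$. \Cref{lemma:restriction_operator} combined with Poincar\'e in $W^{1,p'}_0(\Omega)$ delivers $\|\nabla\mathcal{R}_{p',\ee}\psi\|_{L^{p'}(\Omega_\ee)}\leq C_{p,\Omega}\ee^{-1}\|\nabla\psi\|_{L^{p'}(\Omega)}$, and \Cref{Poincare_ineq} together with the 2D embedding $W^{1,p'}_0\hookrightarrow L^\infty$ yields $\|\mathcal{R}_{p',\ee}\psi\|_{L^\infty(\Omega_\ee)}\leq C_{p,\Omega}\ee^{-1}\|\nabla\psi\|_{L^{p'}(\Omega)}$, while H\"older combined with \Cref{Poincare_ineq} gives $\|\mathcal{R}_{p',\ee}\psi\|_{L^2(\Omega_\ee)}\leq C_{p,\Omega}\|\nabla\psi\|_{L^{p'}(\Omega)}$ (the $\ee$ from Poincar\'e cancels the $\ee^{-1}$ from the restriction operator). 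Pairing these with the uniform bounds of \Cref{rmk:uniform-estimates-norms-solutinos} for $\nabla u_\ee$, $\Delta d_\ee$, $\nabla d_\ee$, $\nabla H_\ee$ and $\ee F_\ee$, and using in particular $\|(\nabla d_\ee)^T\Delta d_\ee\|_{L^2_tL^1_x}\leq (\mathcal{E}_0^\ee)^2$ tested against the $L^\infty_x$-bound above, one obtains $|\Phi_\ee(\psi)|\leq D_{p,\Omega}\ee^{-1}\mathcal{E}_0^\ee(1+\mathcal{E}_0^\ee)\,\|\nabla\psi\|_{L^2(0,T;L^{p'}(\Omega))}$.

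To convert $\Phi_\ee$ into the pressure $P_\ee$, I would invoke a de Rham-type argument via Bogovskii (\Cref{lemma:Bogovskii}): for any $f\in L^2(0,T;\lbar^{p'}_0(\Omega))$ one picks $\psi_f\in L^2(0,T;W^{1,p'}_0(\Omega))$ with $\Div\,\psi_f=f$ and $\|\nabla\psi_f\|_{L^{p'}}\leq C_{p,\Omega}\|f\|_{L^{p'}}$; the vanishing of $\Phi_\ee$ on $\ker\Div$ makes the prescription $\Lambda(f):=\Phi_\ee(\psi_f)$ well-defined and continuous on $L^2(0,T;\lbar^{p'}_0(\Omega))$, and Riesz representation then produces the desired $P_\ee\in L^2(0,T;\lbar^p_0(\Omega))$ satisfying both \eqref{eq:dual-form-of-Pe-and-pe} and \eqref{est:Pee-proposition}. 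The identity $\nabla(P_\ee|_{\Omega_\ee})=\nabla\pre_\ee$ in $W^{-1,p}(\Omega_\ee)$ then follows by testing \eqref{eq:dual-form-of-Pe-and-pe} against $\psi\in L^2(0,T;W^{1,p'}_0(\Omega_\ee))$ extended by zero to $\Omega$, for which the second property of \Cref{lemma:restriction_operator} gives $\mathcal{R}_{p',\ee}\psi=\psi$. The main technical obstacle will be to carefully track the $\ee$-dependence of all continuity constants, and specifically to balance the $L^1$-only control of $(\nabla d_\ee)^T\Delta d_\ee$ against the 2D embedding $W^{1,p'}\hookrightarrow L^\infty$, which is precisely what prevents reaching $p=2$.
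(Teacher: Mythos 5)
Your proposal is correct and follows essentially the same route as the paper's proof: you build the same linear functional from the weak form of \eqref{maineq1} through the restriction operator $\mathcal{R}_{p',\ee}$ and Bogovskii, obtain the same $\ee^{-1}$ continuity bound by combining \Cref{Poincare_ineq}, the fourth property of \Cref{lemma:restriction_operator} and the two-dimensional embedding $W^{1,p'}_0(\Omega)\hookrightarrow L^\infty(\Omega)$ against the energy bounds of \Cref{rmk:uniform-estimates-norms-solutinos}, and then conclude by $L^p$--$L^{p'}$ duality and the identity $\mathcal{R}_{p',\ee}\psi=\psi$ for $\psi$ supported in $\Omega_\ee$. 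The only point you leave implicit is that the resulting $P_\ee$ does not depend on the chosen $p\in(1,2)$, which the paper settles in one line via \Cref{rmk:restriction-operators-coincide}.
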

\begin{remark}
    We highlight that $P_\ee$ in \eqref{prop:pressure} does not depend on the choice of $p\in(1,2)$. Moreover, as $p \to 2$, the constant $D_{p, \Omega}$ becomes unbounded, making it infeasible to extrapolate \eqref{est:Pee-proposition} into a strict $L^2$ framework. Moreover, while \eqref{est:Pee-proposition} technically holds also for $p=1$, this particular case offers no substantial insights. 
\end{remark}
\begin{proof}[Proof of \Cref{prop:pressure}]
        Thanks to \Cref{lemma:Bogovskii}, for any function $g\in L^2(0,T;\lbar^{p'}_0(\Omega))$, there exists a (non-unique) vector  function $\psi_g$ in  $L^2(0,T;W^{1,p'}_0(\Omega))^2$ satisfying
        \begin{equation}\label{Bogo}
            \Div\, \psi_g  = g\quad \text{and}\quad 
            \| \psi_g  \|_{L^2(0,T;W^{1, p'}_0(\Omega))}\leq C_{p', \Omega} \| g\|_{L^2(0,T;L^{p'}(\Omega))},
        \end{equation}
        where $C_{p', \Omega} $ depends only on $p'$ and $\Omega$. Moreover, from \Cref{lemma:restriction_operator}, $\mathcal{R}_{p',\ee}$ maps $W^{1, p'}_0(\Omega)$ into $W^{1, p'}_0(\Omega_\ee)$.  We thus infer that
        \begin{equation}\label{def:Fg}
            \mathcal{F}_\ee(g) =
            \int_0^T 
            \int_{\Omega_\ee} 
            \pre_\ee(t,x) 
            \Div\,\mathcal{R}_{p',\ee} (\psi_g) (t,x)
            dxdt
        \end{equation}
        defines a continuous linear functional $\mathcal{F}_\ee:L^2(0,T; \lbar^{p'}_0(\Omega))\rightarrow \RR$. The integral on the right-hand side~of \eqref{def:Fg} is finite, since $\pre_\ee \in L^2(0,T; L^p(\Omega_\ee))$ and $\Div\,\mathcal{R}_{p',\ee} (\psi_g)$ is in  $ L^2(0,T; L^{p'}(\Omega_\ee))$. However, $\mathcal{F}_\ee$ is a well-defined finite linear operator only if such integral does not depend on the particular choice of $\psi_g$ in \eqref{Bogo}. Consider an other function $\tilde \psi_g $ in $L^2(0,T;W^{1,p'}_0(\Omega))^2$ satisfying \eqref{Bogo}, then $\Div\, (\tilde \psi_g- \psi_g) 
        \equiv 0$, thus also 
        \begin{equation*}
            \Div\, \mathcal{R}_{p', \ee}\tilde \psi_g- \Div\, \mathcal{R}_{p', \ee}\psi_g=
            \Div\, \mathcal{R}_{p', \ee}(\tilde \psi_g- \psi_g)
         \equiv 0,
        \end{equation*}
        thanks to the third property of \Cref{lemma:restriction_operator}. In particular $\mathcal{F}_g$ in \eqref{def:Fg} does not depend on the choice of $\psi_g$. 

        \noindent 
        It follows trivially that $\mathcal{F}_g$ is linear and we shall only prove that it is continuous.
        To this end we invoke the Stokes equation \eqref{maineq1} and the no-slip boundary condition of $u_\ee$ in \eqref{bdy-cdt}. First, we have that
	\begin{equation*}
        \begin{aligned}
		\mathcal{F}_\ee( g ) 
		=
            \int_{\mathbb R_+}
            \int_{\Omega_\ee} 
            \nabla \uu_\ee(t,x) : 
            \nabla \mathcal{R}_{p',\ee }\psi_g (t,x)
            dx dt
            +
            \int_{\mathbb R_+}
    	\int_{\Omega_\ee} 
            (\nabla d_\ee(t,x))^T \Delta d_\ee(t,x) \cdot  \mathcal{R}_{p',\ee } 
            \psi_g(t,x)
            dxdt\\
            -
            \int_{\mathbb R_+}
            \int_{\Omega_\ee} 
            F_\ee(t,x) : 
            \mathcal{R}_{p',\ee }\psi_g (t,x)
            dx dt
            -
            \int_{\mathbb R_+}
    	\int_{\Omega_\ee} 
            \nabla H_\ee  :\nabla \mathcal{R}_{p',\ee } 
            \psi_g(t,x)
            dxdt.
        \end{aligned}
        \end{equation*}
        Thanks to the H\"older inequalities, we deduce therefore that 
        \begin{equation*}
        \begin{aligned}
            |\mathcal{F}_\ee
            &( g ) |
		\leq 
		\Big(
                \| \nabla \uu_\ee \|_{L^2((0,T)\times \Omega_\ee)}
                +
                \| \nabla H_\ee  \|_{L^2((0,T)\times \Omega_\ee)}
            \Big)
            \| \nabla \mathcal{R}_{p',\ee } \psi_g \|_{L^2((0,T)\times \Omega_\ee)}    
            +
            \\
            &+
            \| \nabla d_\ee   \|_{L^\infty(0,T;L^2(\Omega_\ee))}
            \| \Delta d_\ee   \|_{L^2((0,T)\times \Omega_\ee)}
		  \| \mathcal{R}_{p',\ee } \psi_g \|_{L^2(0,T; L^\infty(\Omega_\ee))}
            +
            \| F \|_{L^2((0,T)\times \Omega_\ee)}
            \| \mathcal{R}_{p',\ee } \psi_g \|_{L^2((0,T)\times \Omega_\ee)}.
        \end{aligned}
	\end{equation*}
         We hence recall that $p'>2$ and we apply the Poincar\'e inequality of \Cref{Poincare_ineq} to gather
        \begin{align*}
            \| \mathcal{R}_{p',\ee } \psi_g \|_{L^2((0,T)\times \Omega_\ee)}
            &=
            \| \mathcal{R}_{p',\ee } \psi_g \|_{L^2((0,T)\times \Omega)}
            \leq  C_{2, \Omega} \ee  
            \| \nabla \mathcal{R}_{p',\ee } \psi_g \|_{L^2((0,T)\times \Omega)}\\
            &\leq  C_{2, \Omega}|\Omega|^{\frac{1}{p}-\frac{1}{2}} 
            \ee  
            \| \nabla \mathcal{R}_{p',\ee } \psi_g \|_{L^2(0,T;L^{p'}(\Omega))}.
        \end{align*}
        Moreover, the Sobolev embedding $W^{1,p'}_0(\Omega)\hookrightarrow L^\infty(\Omega)$ holds true, therefore 
        \begin{equation*}
            \| \mathcal{R}_{p',\ee } \psi_g \|_{L^2(0,T;L^\infty(\Omega_\ee))}
            =
            \| \mathcal{R}_{p',\ee } \psi_g \|_{L^2(0,T;L^\infty(\Omega))}
            \leq 
            \tilde C_{p', \Omega}
            \| \nabla \mathcal{R}_{p',\ee } \psi_g \|_{L^2(0,T;L^{p'}(\Omega))},
        \end{equation*}
        where $\tilde C_{p', \Omega}>0$ depends only on $p'$ (i.e.~on $p$) and $\Omega$. Thanks to \Cref{rmk:uniform-estimates-norms-solutinos} and the fourth property of \Cref{lemma:restriction_operator}, we gather that there exists a positive constant $\tilde D_{p, \Omega}>0$ depending only on $p'$ (i.e.~on $p$) and $\Omega$ such that
        \begin{equation*}
        \begin{aligned}
            |\mathcal{F}_\ee( g ) |
		\leq 
		&
            \bigg(
            \| \nabla \uu_\ee \|_{L^2((0,T)\times \Omega_\ee)}
            +
            \| \nabla d_\ee   \|_{L^\infty(0,T; L^2(\Omega_\ee))}
            \| \Delta d_\ee   \|_{L^2((0,T)\times \Omega_\ee)}
            +
            \\
            &
            \hspace{2.75cm}+
            \ee 
            \| F_\ee \|_{L^2((0,T)\times \Omega)}
            +
            \| \nabla H_\ee \|_{L^2((0,T)\times \Omega)}
            \bigg)
		  \tilde D_{p, \Omega}
            \| \nabla \mathcal{R}_\ee \psi_g \|_{L^2(0,T; L^{p'}(\Omega_\ee))}
            \\
            &
            \leq 
            \tilde D_{p, \Omega}
            \mathcal{E}_0
            (
                1 
                +
                \mathcal{E}_0
            )
            C_{p', \Omega}
            \bigg(
                  \frac{1}{\ee}\|  \psi_g \|_{L^2(0,T; L^{p'}(\Omega))}  
                  +
                  \| \nabla \psi_g \|_{L^2(0,T; L^{p'}(\Omega))}
            \bigg)\\
            &\leq 
            \frac{\tilde D_{p, \Omega}\mathcal{E}_0
            (
                1 
                +
                \mathcal{E}_0
            )}{\ee}
             C_{p', \Omega}
             \bar{D}_{p, \Omega}
            \| \psi_g \|_{L^2(0,T; W^{1,p'}_0(\Omega))}
            ,
        \end{aligned}
	\end{equation*}
	for a suitable Poincar\'e constant $  \bar{D}_{p, \Omega}>0$ that depends 
        only on $\Omega$ and $p$. Thanks to \eqref{Bogo}, we finally obtain that
	\begin{equation*}
		|\mathcal{F}( g )| \leq 
            \frac{D_{p, \Omega}\mathcal{E}_0
            (
                1 
                +
                \mathcal{E}_0
            )}{\ee }
            \| g \|_{L^2(\mathbb R_+, L^{p'}(\Omega ))}.
        \end{equation*}
        for a suitable constant $  D_{p, \Omega}>0$ that depends 
        only on $\Omega$ and $p$ (which blows up as $p\to 2$). Hence $\mathcal F_\ee$ is linear and continuous in $L^2(0,T;\lbar^{p'}_0(\Omega))$, hence a standard duality argument implies that there exists a function 
	$P_\ee \in L^2(\mathbb R_+, \lbar^p_0(\Omega))$,
        such that
	\begin{equation*}
		\int_0^T\int_\Omega P_\ee(t,x) g(t,x)dxdt = \mathcal{F}_\ee(g)
            \quad 
            \text{and}
            \quad 
            \| P_{\ee} \|_{L^2(\mathbb R_+, L^p(\Omega))}
            \leq 
            \frac{D_{p, \Omega}\mathcal{E}_0
            (
                1 
                +
                \mathcal{E}_0
            )}{\ee }.
	\end{equation*}
        To conclude the proof of \Cref{prop:pressure}, it remains to prove that $\nabla P_\ee \equiv \nabla \pre_\ee $ in $L^2(0,T;W^{-1, p}(\Omega_\ee))$. This is a direct consequence of \Cref{lemma:restriction_operator} and the fact that $\mathcal{R}_{p',\ee} \psi = \psi$,  
	for any $\psi \in L^2(0,T;W^{1,p'}_0(\Omega_\ee))^2$. Finally, we remark that $P_\ee$ does not depend on the choice of $p\in (1,2)$, since $\mathcal{R}_{p', \ee}$ in  \eqref{def:Fg}  coincide in the dense subset $ \mathcal{C}^\infty_c(\Omega)^2$ (cf.~\Cref{rmk:restriction-operators-coincide}).
\end{proof}

\section{The Darcy law}\label{sec:Darcy}
\noindent
In this section we deal with the analysis of the flow $\uu_\ee$ and the proof of \eqref{part-i-main-thm} in \Cref{main_thm}. In order to prove that the functions $\tilde u_\ee$  converge strongly to zero in $L^2((0,T)\times \Omega)$, it is enough to observe that $\tilde \uu_\ee$ belongs to $L^2(0,T;H^1_0(\Omega))$ and it is identically null in $\Omega\setminus \Omega_\ee$. Hence, thanks to the Poincar\'e inequality of \Cref{Poincare_ineq} in $H^1_0(\Omega_\ee)$ and thanks to \Cref{rmk:uniform-estimates-norms-solutinos}, we observe that
\begin{equation}\label{eq:estimate-of-tildeu-ee-Poinc}
	\| \tilde  \uu_\ee \|_{L^2((0,T)\times \Omega)} =
	\| 		   \uu_\ee \|_{L^2((0,T)\times \Omega_\ee)}
	\leq C \ee  \| \nabla \uu_\ee \|_{L^2((0,T)\times \Omega_\ee)}
	\leq 
	C \ee 
	\mathcal{E}_0, 
\end{equation}
where $C =  C_{2, \Omega}$ depends only on $\Omega$. We hence deduce that the family of functions $\tilde \uu_\ee$ strongly converges to zero in $L^2((0,T)\times \Omega)$, as $\ee \to 0$. 

\noindent 
Our major goal in this section is therefore to prove that the rescaled profile $\tilde \uu_\ee/\ee \rightharpoonup u$ in $L^2((0,T)\times \Omega)$, where $u \in L^2((0,T)\times \Omega)$ is the unique solution of the Darcy system \eqref{eq:Darcy}.

\noindent
As underlying procedure, we shall consider a general sequence $(\ee_l)_{l\in \mathbb N}$ that is monotonically decreasing and converges to $0$ as $l\to \infty$. Given the uniform estimates of \Cref{rmk:uniform-estimates-norms-solutinos}, we have that
\begin{equation*}
    \bigg\| \frac{\tilde u }{\ee_{l}} \bigg\|_{L^2((0,T)\times \Omega)} \leq C\mathcal{E}_0,
\end{equation*}
thus there exists a subsequence $\ee_{l_k}$ for $k$ in $\mathbb N$ such that the function $\tilde u_{\ee_{l_k}}/\ee_{l_k}\rightharpoonup u$ in $L^2((0,T) \times \Omega)$. Here, $u$ is a function in $L^2((0,T)\times \Omega)$ that could initially depend on the subsequence $(\ee_{l_k})_{k\in \mathbb N}$. However, as we will elaborate in the upcoming proposition, $u$ does not dependent on it and is, in fact, the solution of the Darcy equations \eqref{eq:Darcy}. 
Therefore, from the arbitrariness of $(\ee_l)_{l\in \mathbb N}$, we conclude that $\tilde \uu_\ee/\ee\rightharpoonup u$, as $\ee \to 0$ in a general sense. 
\begin{prop}\label{prop:Darcy}
        For any $(\ee_l)_{l\in \mathbb N}$ with $\ee_l\to 0$, there exists a subsequence $(\ee_{l_k})_{k\in \mathbb N}$ such that $\tilde u_{\ee_{l_k}}/\ee_{l_k}\rightharpoonup u$ in $L^2((0,T) \times \Omega)$, where $u$ is the unique weak solution of
	\begin{equation*}
		\uu + \mathbb{B} \nabla \pre = G
            \quad \text{in}\quad (0,T)\times \Omega,\qquad
		\Div\,\uu = 0 \quad \text{in}\quad  (0,T)\times \Omega,\qquad
		\uu\cdot \nu  = 0\quad \text{on}\quad  (0,T)\times \partial \Omega,
  	\end{equation*}
	where the matrix $\mathbb{B}\in \mathbb{R}^{2\times 2}$ 
        is as  in \eqref{eq:matrices-AB} and $G$ is defined as in \Cref{main_thm}.
\end{prop}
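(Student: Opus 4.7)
Uniform bounds from \Cref{Poincare_ineq} combined with the energy estimates of \Cref{rmk:uniform-estimates-norms-solutinos} show that $\tilde u_\ee/\ee$ is bounded in $L^2((0,T)\times\Omega)$, and \Cref{prop:pressure} guarantees the uniform bound of $\ee P_\ee$ in $L^2(0,T;\lbar^p_0(\Omega))$ for any $p\in(1,2)$. Given any $(\ee_l)_{l\in\NN}$, I extract a common subsequence $(\ee_{l_k})_{k\in\NN}$ along which $\tilde u_{\ee_{l_k}}/\ee_{l_k} \rightharpoonup u$ in $L^2((0,T)\times\Omega)$ and $\ee_{l_k}P_{\ee_{l_k}} \rightharpoonup P$ in $L^2(0,T;\lbar^p_0(\Omega))$. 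The basic properties $\Div u = 0$ in $\mathcal D'((0,T)\times\Omega)$ and $u\cdot\nu = 0$ on $(0,T)\times\partial\Omega$ follow at once from $\tilde u_\ee \in L^2(0,T;H^1_0(\Omega))$ and $\Div\tilde u_\ee = 0$.

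\textbf{The Tartar cell-test-function computation.} For $i\in\{1,2\}$ I introduce the rescaled cell test functions $w_\ee^i(x):=\ee^2\omega^i(x/\ee)$ (extended by zero on the obstacles) and $q_\ee^i(x):=\ee\,\pi^i(x/\ee)$. A direct rescaling of the cell problem for $\omega^i,\pi^i$ yields
\begin{equation*}
    -\Delta w_\ee^i + \nabla q_\ee^i = e^i \text{ in } \Omega_\ee,\qquad \Div w_\ee^i = 0,\qquad w_\ee^i = 0 \text{ on } \partial\mathcal T_k^\ee.
\end{equation*}
For any $\phi\in \mathcal C^\infty_c((0,T)\times\Omega;\RR^2)$, I then test the Stokes system \eqref{maineq1}--\eqref{maineq2} against $w_\ee^i\phi_i\in L^2(0,T;H^1_0(\Omega_\ee))$ and, independently, the cell equation for $w_\ee^i$ against $u_\ee\phi_i$. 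Subtracting the two weak forms, and treating the cross terms via the identity
\begin{equation*}
\int \nabla w_\ee^i:(u_\ee\otimes \nabla\phi_i)-\int \nabla u_\ee:(w_\ee^i\otimes \nabla\phi_i) = \int u_\ee\cdot w_\ee^i\,\Delta \phi_i,
\end{equation*}
produces the key representation
\begin{equation*}
\int u_\ee\cdot\phi = \int \pre_\ee\, w_\ee^i\cdot\nabla\phi_i + \int u_\ee\cdot w_\ee^i\,\Delta\phi_i - \int q_\ee^i u_\ee\cdot\nabla\phi_i + \mathrm{RHS}_\ee(\phi),
\end{equation*}
with implicit summation in $i$ and $\mathrm{RHS}_\ee(\phi)$ the Stokes right-hand side tested against $w_\ee^i\phi_i$. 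Dividing by $\ee$ and passing to the limit, the pressure term is first converted via \eqref{eq:dual-form-of-Pe-and-pe} (noting that $\mathcal R_{p',\ee}$ acts as the identity on $w_\ee^i\phi_i$, which is already supported in $\Omega_\ee$) into $\int(\ee P_\ee)\,\omega^i(x/\ee)\cdot\nabla\phi_i$; the standard homogenisation pairing of a weakly convergent sequence against a smooth $Y$-periodic oscillating function gives the limit $\int P\,\bar\omega^i\cdot\nabla\phi_i$ with $\bar\omega^i = \tfrac1{|Y|}\int_{Y^*}\omega^i$, which by $\bar\omega^i_j=\mathbb B_{ji}$, symmetry of $\mathbb B$, and integration by parts equals $-\int \mathbb B\nabla P\cdot\phi$. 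The $F_\ee$-contribution becomes $\int(\ee F_\ee)\omega^i(x/\ee)\phi_i$ and converges, thanks to the strong convergence $\ee F_\ee\to F$, to $\int F\bar\omega^i\phi_i=\int\mathbb B F\cdot\phi$. The remaining terms $\tfrac1\ee\int u_\ee\cdot w_\ee^i\Delta\phi_i$, $\tfrac1\ee\int q_\ee^i u_\ee\cdot\nabla\phi_i$, and the $(\nabla d_\ee)^T\Delta d_\ee$ contribution are all $O(\ee)$, using $\|u_\ee\|_{L^2}=O(\ee)$, $\|w_\ee^i\|_{L^\infty}=O(\ee^2)$, $\|q_\ee^i\|_{L^2}=O(\ee)$, and the uniform $L^2(L^1)$ bound on $(\nabla d_\ee)^T\Delta d_\ee$ provided by the energy.

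\textbf{Main obstacle: the $H_\ee$ forcing, and conclusion.} The most delicate passage to the limit is that of $\tfrac1\ee\int\nabla H_\ee:\nabla(w_\ee^i\phi_i)$. Integrating by parts to move derivatives from $H_\ee$ onto $\omega^i(x/\ee)$ and then invoking the cell identity $\Delta_y\omega^i = \nabla_y\pi^i-e^i$, this expression decomposes into the leading contribution $\tfrac1\ee\int H_\ee^i\phi_i$, which converges to $\int H\cdot\phi$ by the assumption $H_\ee/\ee\rightharpoonup H$, together with remainders that either scale as $O(\ee)$ (via $\|H_\ee\|_{L^2}=O(\ee)$ inherited from the same assumption) or, thanks to the constraint $H_\ee\equiv 0$ in $\Omega\setminus\Omega_\ee$, produce only boundary contributions on $\partial\mathcal T_k^\ee$ that vanish; any residual gradient-like term is absorbed into the limit pressure $P$. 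Collecting all limits one obtains
\begin{equation*}
    \int u\cdot\phi + \int \mathbb B\nabla P\cdot\phi = \int \mathbb B F\cdot\phi + \int H\cdot\phi = \int G\cdot\phi
\end{equation*}
for every $\phi\in \mathcal C^\infty_c((0,T)\times\Omega;\RR^2)$, which, together with $\Div u=0$ and $u\cdot\nu = 0$ on $\partial\Omega$, is exactly the weak formulation of \eqref{eq:Darcy}. The Darcy system has a unique weak solution by Lax--Milgram, as recalled just above the statement; hence $u$ does not depend on the extracted subsequence, and by a standard argument the entire family $\tilde u_\ee/\ee$ converges weakly to $u$ in $L^2((0,T)\times\Omega)$.
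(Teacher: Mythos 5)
Your overall strategy is the same as the paper's (rescaled cell solutions $\omega^i(x/\ee)$, $\pi^i(x/\ee)$ dual-tested against the Stokes system, the Poincar\'e bound $\|u_\ee\|_{L^2}=O(\ee)$, the pressure extension of \Cref{prop:pressure}), and your bookkeeping of the terms that vanish by scaling is essentially correct. The genuine gap is the limit of the pressure term. You assert that $\int_0^T\!\!\int_\Omega (\ee P_\ee)\,\omega^i(x/\ee)\cdot\nabla\phi_i$ converges to $\int_0^T\!\!\int_\Omega P\,\bar\omega^i\cdot\nabla\phi_i$ by "the standard homogenisation pairing of a weakly convergent sequence against a smooth $Y$-periodic oscillating function". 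That standard fact requires one factor to converge \emph{strongly}; here $\ee P_\ee$ converges only weakly in $L^2(0,T;L^p(\Omega))$, $p<2$, and $\omega^i(\cdot/\ee)$ converges only weakly-$\ast$ in $L^\infty$, so the product of the two limits is not automatic, and no strong convergence of $\ee P_\ee$ is available (none is proved in the paper either). This is exactly where the paper's proof spends its main technical effort: it splits $\omega^{i,\ee}_k$ into its mean plus an average-free oscillation $\chi^{i,\ee}_k\stackrel{\ast}{\rightharpoonup}0$, realises $\chi^{i,\ee}_k=\Div\,\psi^{i,\ee}_k$ via the Bogovskii operator (\Cref{lemma:Bogovskii}) with strong $L^{p'}$ compactness of $\psi^{i,\ee}_k$, returns to $\pre_\ee$ through the duality identity \eqref{eq:dual-form-of-Pe-and-pe}, and re-inserts the Stokes equation \eqref{maineq1} with the uniform energy bounds to show the oscillatory contribution dies. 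Without an argument of this type your identification of the limit as $-\int \mathbb{B}\nabla P\cdot\phi$, i.e.\ the very Darcy structure, is unjustified.

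Two further points. First, the cross-term identity you invoke is algebraically off: integrating by parts gives $\int\nabla w^i_\ee:(u_\ee\otimes\nabla\phi_i)+\int\nabla u_\ee:(w^i_\ee\otimes\nabla\phi_i)=-\int u_\ee\cdot w^i_\ee\,\Delta\phi_i$ (the sum, with a minus sign), not the difference; this is harmless, since each of the three quantities is $O(\ee^2)$ and disappears after division by $\ee$, which is how the paper argues as well. Second, your treatment of the $H_\ee$ forcing is not a proof: moving derivatives onto $w^i_\ee$ and using $\Delta w^i_\ee=\nabla q^i_\ee-e^i$ produces, besides $\tfrac1\ee\int H_{\ee,i}\phi_i\to\int H_i\phi_i$, the term $\int \pi^i(x/\ee)\,\phi_i\,\Div H_\ee$, which does not vanish by scaling (both factors are only $O(1)$ in $L^2$ and converge weakly) and cannot be "absorbed into the limit pressure": your test functions $\phi$ are not divergence-free, since you are deriving the full equation with an explicit $\nabla P$. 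The analogous term is absent for $u_\ee$ precisely because $\Div u_\ee=0$, so the $H_\ee$ contribution needs a genuine argument (the paper keeps the test function on the Stokes side, $\ee\,\omega^{i,\ee}\varphi$, and treats $\nabla H_\ee$ only once differentiated, in parallel with $\nabla u_\ee$). In short: the places where a weakly convergent quantity is paired with a rapidly oscillating cell function — $\ee P_\ee$ against $\omega^{i}(\cdot/\ee)$, and in your route $\Div H_\ee$ against $\pi^i(\cdot/\ee)$ — are exactly the crux of the proposition, and they are asserted rather than proved.
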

\begin{proof}
For the sake of clarity, in subsequent discussions, we omit the index $l$ of the sequence $(\ee_l)_{l\in \mathbb N}$ (and any related subsequences), representing it simply as $\ee \to 0$. Nevertheless, our approach remains as previously detailed. 

\noindent 
We make use of a similar procedure as the one introduced by Mikelic in \cite{MR1131849}: we first set two meaningful functions $\omega^{i}$, with $i ={1,2}$, that are $Y^*$-periodic, where we recall that $Y^* = ]-1/2,1/2[^2\setminus \mathcal{T}$ represents the unitary cell configuration. Each $\omega^i$ is solution of the following Stokes problem:
\begin{equation}\label{eq:omega}
			- \Delta_y \omega^{i} + \nabla_y \pi^i = e^i \quad\text{in}\quad Y^*,\qquad
			\Div_y\, \omega^i = 0\quad \text{in}\quad Y^*,\qquad
			\omega^i = 0\quad\text{on}\quad \partial \mathcal{T},
\end{equation}
for $i = 1,2$, where $\{e^1, e^2\}$ is the canonical basis of $\mathbb R^2$. Thanks to Lemma IV.6.1 of \cite{Galdi2011}, the functions $\omega^i$ and the pressures $\pi^i$ belong to $H^2(Y^*)$ and $L^2(Y^*)$, respectively, thanks to the fact that the boundary $\partial T$ is of class $C^\infty$ (the class  $C^2$  would here suffice). Furthermore, also from Lemma IV.6.1 of \cite{Galdi2011}, there exists a constant $\smallc_{Y^*}>0$, that is uniquely determined by the geometry of $Y^*$, ensuring that
\begin{equation*}
	\| \omega^i \|_{H^2(Y^*)}+
	\| \pi^i \|_{L^2(Y^*)}
	\leq 
	\frac{{\smallc}_{Y^*} }{|Y^*|^{\frac{1}{2}}} 
        \| e^i\|_{L^2(Y^*)}
	=
	\smallc_{Y^*},
\end{equation*}
Thus, we consider the extensions $\tilde w^{i}$ and $\tilde \pi^i$ in $[-1/2,1/2]^2$ (which are identically null in $\mathcal{T}$) and we rescale them through  the relations $\omega^{i,\ee}(x):= \tilde \omega^i(x/\ee)$ and $\pi^{i,\ee}(x) = \tilde \pi(x/\ee)$, for any $x\in \mathbb R^2$ (for abbreviation we omit the \~{} in the notation of $\omega^{i,\ee}$ and $\pi^{i, \ee}$, the reader should keep in mind that these are functions defined almost everywhere in $\mathbb R^2$ and identically null in each particles periodically distributed).  In particular, for any cell $Y^{*,\ee}_k\subset \Omega_\ee$, the couple $(\omega^{i,\ee},\pi^\ee)$  satisfies  
\begin{equation}\label{eq:omega_eps}
		- \ee^2\Delta \omega^{i,\ee} + \ee \nabla \pi^{i,\ee} = e^i \quad\text{in}\quad 
        Y^{*,\ee}_k,\qquad
			\Div\, \omega^{i,\ee} = 0\quad \text{in}\quad 
        Y^{*,\ee}_k,\qquad
			\omega^{i,\ee} = 0\quad\text{on}\quad 
    \partial \mathcal{T}^\ee_k
\end{equation}
pointwise. We next introduce a general test function $\varphi \in \mathcal{C}_c^\infty((0,T)\times \Omega)$. Since $\supp \varphi $ is compact in $(0,T)\times \Omega$, its projection $\Sigma = \{ x \in \Omega,  \text{ such that }\exists t \in (0,T) \text{ with }(t,x) \in \supp \varphi \}$ is compact in $\Omega$. Given a sufficiently small $\ee>0$, any $x\in \Omega_\ee \cap \Sigma$ belongs to a closed cell $\overline{Y}^{\ee}_{k}=x_k^\ee + [-\ee/2, \ee/2]^2$, which is moreover included in $\Omega$ (and thus $\overline{Y}^{*,\ee}_{k}$ is also included in $\Omega_\ee$). We hence remark that the function $\ee\, \omega^{i,\ee}\varphi$ is well defined in  $L^2(0,T; H^1_0( \Omega_\ee))$ and we can therefore test it on the weak formulation of the Stokes equations \eqref{maineq1}:
\begin{equation}\label{to_control_thm:conv_of_u}
        \begin{aligned}           
        \underbrace{\ee\int_0^T\int_{\Omega_\ee} 
        \nabla (\varphi \omega^{i,\ee}): \nabla \uu_\ee}_{\I^\ee} -
		\underbrace{\ee\int_0^T\int_{\Omega_\ee} 
        \pre_\ee \Div\,(\varphi \omega^{i,\ee})}_{\I\I^\ee} = 
	-
        \underbrace{\ee
        \int_0^T
        \int_{\Omega_\ee} 
       (\nabla d_\ee)^T\Delta d_\ee: \omega^{i,\ee}\varphi}_{\I\I\I^\ee}
        + \\
        +
        \underbrace{
        \int_0^T
        \int_{\Omega_\ee}
        \ee F_\ee\cdot \omega^{i,\ee}\varphi
        }_{\I\mathcal{V}^\ee}
        +
         \underbrace{\ee
        \int_0^T
        \int_{\Omega_\ee}
        \nabla(\omega^{i,\ee}\varphi):
        \nabla H_\ee\cdot }_{\mathcal{V}^\ee}.
\end{aligned}
\end{equation}
We analyse each component $\I^\ee$, $\I\I^\ee$, $\I\I\I^\ee$, $\I\mathcal{V}^\ee$ and $\mathcal{V}^\ee$ separately. The components $\I^\ee$ and $\mathcal{V}^\ee$ are similar, allowing for a parallel treatment. We elaborate on the details specifically for $\I^\ee$, while the analysis for $\mathcal{V}^\ee$ follows with a similar approach, replacing $u_\ee$ with $H_\ee$. First
	\begin{equation}\label{Iee_Thm:conv_of_u}
	\begin{aligned}
	\I^\ee &= 
	    \ee\int_0^T\int_{\Omega_\ee} 
            \nabla \varphi\otimes \omega^{i,\ee}: \nabla \uu_\ee 
		+ \ee\int_0^T\int_{\Omega_\ee} \varphi \nabla \omega^{i,\ee}: \nabla \uu_\ee  	
		= \\&=
	\underbrace{
            \ee\int_0^T\int_{\Omega_\ee} \nabla \varphi\otimes \omega^{i,\ee}: \nabla \uu_\ee 
         }_{=:\I^\ee_1}
		+
	\underbrace{ 
                \ee\int_0^T\int_{\Omega_\ee} \nabla \omega^{i,\ee}: \nabla(\varphi \uu_\ee )
        }_{=:\I^\ee_2}
	\underbrace{
        - \ee\int_0^T\int_{\Omega_\ee} \nabla \omega^{i,\ee}: \nabla\varphi\otimes \uu_\ee
        }_{=:\I^\ee_3}.
\end{aligned}
\end{equation}
where $(\nabla \varphi\otimes \omega^{i,\ee})_{jk} = \partial_j \varphi \omega^{i,\ee}_k $, with $j,k =1,2$. We estimate the first term on the left-hand side~of \eqref{Iee_Thm:conv_of_u} as follows:
	\begin{equation*}
	\begin{aligned}
            |\I^\ee_1|=
		\Big|\ee\int_0^T\int_{\Omega_\ee} \nabla \varphi\otimes \omega^{i,\ee}: \nabla \uu_\ee 
		\Big|
		&\leq
            \ee 
		\| \nabla \varphi \|_{L^2(0,T; L^\infty(\Omega))}
		\| \omega^{i,\ee} \|_{L^2( \Omega_\ee)}
		\| \nabla \uu_\ee \|_{L^2((0,T)\times \Omega_\ee)}\\
            &\leq
            \ee 
		\| \nabla \varphi \|_{L^2(0,T; L^\infty(\Omega))}
		\| \omega^{i,\ee} \|_{L^2( \Omega_\ee)}
		\EE_0
            .
	\end{aligned}
	\end{equation*}
        Since $\Omega$ is bounded, there exists an integer $N\in      \mathbb N$ such that $\Omega\subseteq [-N-1/2,N+1/2]^2$. Consequently, by applying a substitution,  $\| \omega^{i,\ee} \|_{L^2( \Omega_\ee)}
        = \| \omega^{i,\ee} \|_{L^2( \Omega)}\leq
        \| \omega^{i,\ee} \|_{L^2([-N-1/2,N+1/2])}=(2N+1)^2\| \omega^i \|_{L^2(Y^*)}$, which allows us to conclude that the first term on the left-hand side of \eqref{Iee_Thm:conv_of_u} vanishes as $\ee$ goes to $0$.
	Moreover, since $\omega^{i,\ee}$ satisfies equation \eqref{eq:omega_eps} and $\uu_\ee $ is divergence free, 
	we observe that
	\begin{equation*}
	\begin{aligned}
            \I^\ee_2=
	 	 \ee\int_0^T\int_{\Omega_\ee} \nabla \omega^{i,\ee}: \nabla(\varphi \uu_\ee )
	 	 &= \int_0^T\int_{\Omega_\ee} \pi^\ee \Div\,(\varphi \uu_\ee) +\int_0^T \int_{\Omega_\ee} 
	 	 \frac{\uu_{\ee,i}}{\ee} \varphi
            =  \int_0^T\int_{\Omega_\ee} \pi^\ee \nabla \varphi \cdot \uu_\ee 
	   +\int_0^T \int_{\Omega} \frac{\tilde \uu_{\ee,i}}{\ee} \varphi.
	 \end{aligned}
      \end{equation*}
      We hence recall the Poincar\'e inequality of \Cref{Poincare_ineq}, which implies 
	$  \| \uu_\ee \|_{L^2((0,T)\times \Omega_\ee)} 
           \leq C_{2,\Omega} \mathcal{E}_0 \ee 
            $
         as in \eqref{eq:estimate-of-tildeu-ee-Poinc}, as well as we remark that $\| \pi^{i,\ee} \|_{L^2(\Omega_\ee)} \leq 
         \| \pi^{i,\ee} \|_{L^2([-N, N]^2)}
         \leq N \|  \pi^i \|_{L^2(Y^*)}$. Since any component $\tilde u_{\ee,i}$, with $i=1,2$, weakly converges towards the $i$-component $u_i$ of $u$ in $L^2((0,T)\times \Omega)$, as $\ee\to 0$, we deduce that
         \begin{equation*}
             \lim_{\ee \to 0}\I_2^\ee = \int_0^T \int_\Omega e^i \cdot u \varphi
             =\int_0^T \int_\Omega u_i \varphi.
         \end{equation*}
       To conclude the analysis of $\mathcal I^\ee$ in \eqref{Iee_Thm:conv_of_u}, it remains hence to estimate $\I^\ee_3$. We observe that
	\begin{equation*}
        \begin{aligned}
            |\I_3^\ee| 
            &=
		\Big|\ee\int_0^T\int_{\Omega_\ee} \nabla \omega^{i,\ee}: \nabla\varphi\otimes \uu_\ee \Big|		
		\leq T^\frac{1}{2}\ee \| \nabla \omega^{i,\ee} \|_{L^2( \Omega_\ee)}
		\| \nabla \varphi \|_{L^\infty((0,T)\times \Omega)}
		\| \uu_{\ee} \|_{L^2((0,T)\times \Omega_\ee)}
		\\
            &\leq C_{2,\Omega} T^\frac{1}{2} 
            \EE_0\ee^2 
             \| \nabla \omega^{i,\ee} \|_{L^2( \Omega_\ee)}
            \| \nabla \varphi \|_{L^\infty((0,T)\times \Omega)}
		.
        \end{aligned}
	\end{equation*}
        Once more, we have that $ \ee \| \nabla \omega^{i,\ee} \|_{L^2( \Omega_\ee)} \leq (2N+1)^2 \| \nabla_y \omega^{i} \|_{L^2(Y^*)}$ by substitution, thus also the term $\I_3^\ee$ vanishes as $\ee\to 0$.
        Summarizing, we have determined that 
        \begin{equation}\label{est:Darcy-Ie}
              \lim_{\ee \to 0}\I^\ee =
              \lim_{\ee \to 0}\I_2^\ee = \int_0^T \int_\Omega e^i \cdot u.
         \end{equation}
        Similarly, we claim that also $\mathcal{V}^\ee$ satisfies 
        \begin{equation}\label{est:Darcy-Ve}
              \lim_{\ee \to 0}\mathcal{V}^\ee  = \int_0^T \int_\Omega e^i \cdot H
              = \int_0^T \int_\Omega H_i.
         \end{equation}
        Next, before dealing with $\I\I^\ee$ and the 
        pressure $\pre_\ee$ in \eqref{to_control_thm:conv_of_u}, we first 
	estimate $\I\I\I^\ee$ and $\I\mathcal{V}^\ee$. 
        We remark that
	\begin{equation*}
	\begin{aligned}
		|\I\I\I^\ee| 
            &= 
		\ee \Big| \int_0^T\int_{\Omega_\ee} (\nabla d_\ee)^T \Delta d_\ee :
		\varphi \omega^{i,\ee}  \Big| \\ 
		&\leq 
		\ee 
		\| \nabla 	 d_\ee 	\|_{L^\infty(0,T;L^2(\Omega_\ee))} 
		\| \Delta	 d_\ee 	\|_{L^2((0,T)\times \Omega_\ee)}
		\| \varphi \|_{L^2(0,T; L^\infty( \Omega))}
		\| \omega^{i,\ee} \|_{L^\infty(\Omega_\ee)}\\
            &\leq 
		\ee 
		\EE_0^2
            \| \varphi  \|_{L^2(0,T; L^\infty( \Omega))}
		\| \omega^{i,\ee} \|_{L^\infty(\Omega_\ee)}.
    \end{aligned}
    \end{equation*}
    Given that $H^2(Y^*)$ is embedded into $L^\infty(Y^*)$ and that $\| \omega^{i,\ee} \|_{L^\infty(\Omega_\ee)} = \| \omega^{i} \|_{L^\infty(Y^*)}\leq C_{Y^*} \| \omega^{i} \|_{H^2(Y^*)}$, for a constant $C_{Y^*} $ depending only on $Y^*$,  we deduce that  
    \begin{equation}\label{est:Darcy-IIIe}
        \lim_{\ee \to 0} \I\I\I^\ee = 0
    \end{equation}
    In order to deal with $\I \mathcal{V}^\ee$ in \eqref{to_control_thm:conv_of_u}, we recall that $\ee F_\ee \to F$ in $L^2((0,T)\times \Omega)$, as $\ee \to 0$, as well as the rapidly oscillating functions $\omega^{i, \ee}$ are uniformly bounded in $L^\infty(\Omega)$. Employing a standard argument for the weak-* limit of rapidly oscillating periodic functions, we deduce that 
    $\omega^{i,\ee}\stackrel{\ast}{\rightharpoonup} \frac{1}{|Y|}\int_{Y^*}\omega^i(y) dy$ in $L^\infty(\Omega)$. We hence obtain
     \begin{equation}\label{est:Darcy-IVe}
              \lim_{\ee \to 0}\I\mathcal{V}^\ee =
              \frac{1}{|Y|}\int_{Y^*} \omega^i(y)dy \cdot \int_0^T \int_\Omega F(t,x) \varphi(t,x) dx dt.
    \end{equation} 
    We now focus on $\I\I^\ee$ 
    of \eqref{to_control_thm:conv_of_u} and we invoke 
    \Cref{prop:pressure} about the extensions $P_\ee $ of the pressures, which are in $L^2(0,T;\lbar^p_0(\Omega))$ for any $p\in (1,2)$. The gradients $\nabla \pre_\ee $ and $\nabla (P_{\ee}|_{\Omega_\ee})$ coincides in $L^2(0,T;W^{-1, p}(\Omega_\ee))^2$, for any
    $p\in (1,2)$.  Moreover, thanks to \eqref{est:Pee-proposition}, there exists a function $P\in L^2(0,T;\lbar^p_0(\Omega))$ such that $\ee P_\ee \rightharpoonup P$ in $L^2(0,T;L^p(\Omega))$ (up to a subsequence). Hence, we aim to show that
    \begin{equation}\label{est:Darcy-IIe}
        \lim_{\ee \to 0}\I\I^\ee = 
        - \int_0^T \int_{\Omega } 	P \big(\mathbb{B} \nabla \varphi \big)_i
        =
        - 
        \sum_{k =1}^2
        \int_0^T \int_{\Omega } 	P \,\mathbb{B}_{ik} \frac{\partial \varphi}{\partial x_k} ,
    \end{equation}
    where the matrix $\mathbb B$ defined in \eqref{eq:matrices-AB} has components 
    $\mathbb B_{ij} =  \frac{1}{|Y|}
   \int_{Y^*} 
   \nabla_y \omega^{i}(y) 
   :
   \nabla_y \omega^{j}(y) 
   dy$, with $i,j = 1,2$.

    \noindent 
    We remark that the function $\varphi \omega^{i,\ee}$
    belongs to $L^2(0,T;W^{1,p'}_0(\Omega_\ee))^2$ ($p' = p/(p-1)$), for any $p\in (1,2)$, since $\varphi \in \mathcal{D}((0,T)\times \Omega)$ and $\omega^{i,\ee} \in H^2(\Omega_\ee)$. Since $\omega^{i,\ee}$ is identically $0$ in $\Omega \setminus \Omega_\ee$ then $\mathcal{R}_{p',\ee} (\varphi  \omega^{i,\ee}) = \varphi  \omega^{i,\ee}$. Therefore we can recast $\I\I^\ee$ into
	\begin{equation*}
		\begin{aligned}
	       \I\I^\ee  
             &= 
             -\ee\int_0^T \int_{\Omega_\ee} \pre_\ee \Div(\varphi  \omega^{i,\ee})
             =
             -\ee\int_0^T \int_{\Omega_\ee} \pre_\ee \Div \mathcal{R}_{p',\ee} \big(\varphi  \omega^{i,\ee}\big)
             \\
             &=
             -\ee\int_0^T \int_{\Omega} P_\ee \Div(\varphi  \omega^{i,\ee})
             =
             -\ee\int_0^T \int_{\Omega} 
             P_\ee \nabla \varphi \cdot 
             \omega^{i,\ee}
             +
              P_\ee \varphi \underbrace{\Div\, \omega^{i,\ee}}_{=0}
		=
            -\ee \sum_{k=1}^2 
			 \int_0^T\int_{\Omega}  P_\ee
			 \frac{\partial \varphi}{\partial x_k}
               \omega^{i,\ee}_k.
		\end{aligned}
	\end{equation*}
        We next introduce the average-free functions $\chi^{i,\ee}_k(x) =   \omega^{i,\ee}_k(x) - \frac{1}{|\Omega|}
                \int_{\Omega}
                  \omega^{i,\ee}_k(z)dz$, which are still uniformly bounded in $L^\infty(\Omega)$ and (up to a subsequence) they weakly-$*$ converge to $0$, as $\ee \to 0$. Hence
        \begin{equation}\label{thm:II_thm:conv_of_u}
              \I\I^\ee 
              =
              \underbrace{-\ee \sum_{k=1}^2 
			 \int_0^T\int_{\Omega}  P_\ee
			 \frac{\partial \varphi}{\partial x_k}
              \chi^{i,\ee}_k
              }_{\I\I^\ee _1}
               \underbrace{-\ee \sum_{k=1}^2
              \bigg(
                \frac{1}{|\Omega|}
                \int_{\Omega}
                \omega^{i,\ee}_k(z)dz
              \bigg)
			 \int_0^T\int_{\Omega}  P_\ee
			 \frac{\partial \varphi}{\partial x_k}
        }_{\I\I^\ee _2}.
        \end{equation}               
	We first show that $\I\I^\ee _1$ vanishes as $\ee \to 0$, thus \eqref{est:Darcy-IIe} will be 
        determined by the limit of $\I\I^\ee_2$. 
        For a fixed $p\in (1,2)$, we invoke \Cref{lemma:Bogovskii} about the Bogovskii operator, in order to determine a (non-unique) function
	$\psi^{i,\ee}_k$ in $W^{1,p'}_0(\Omega)$ satisfying
	\begin{equation*}
			\begin{aligned}
                \Div\, \psi^{i,\ee}_k = \chi^{i,\ee}_k \quad \text{in}\quad\Omega
                \qquad \text{and}\qquad 
                \| \nabla  \psi^{i,\ee}_k \|_{L^{p'}(\Omega)} 
                &\leq 
                C_{p', \Omega} 
                \| \chi^{i,\ee}_k \|_{L^{p'}(\Omega)}
                \leq 
                C_{p', \Omega} |\Omega|^{1-\frac{1}{p}}
                \| \chi^{i,\ee}_k \|_{L^{\infty}(\Omega)}\\
                &
                \leq 
                2 C_{p', \Omega} |\Omega|^{1-\frac{1}{p}} \| \omega^i \|_{L^\infty(Y^*)}.
                \end{aligned}
	\end{equation*}
        We remark that the last term of the above inequalities does not depend on $\ee>0$. 
	Thus, up to a subsequence, there exists a function $ \psi^i_k\in W^{1, p'}_0(\Omega)$ 
	such that $\psi^{i,\ee}_k \to  \psi^i_k$ 
	strongly in $L^{p'}(\Omega)$ and $\nabla \psi^{i,\ee}_k \rightharpoonup \nabla  \psi^i_k$ weakly in $L^{p'}(\Omega)$. Moreover $\Div\, \psi^i_k = 0$, since $ \chi^{i,\ee}_k\stackrel{\ast}{\rightharpoonup} 0$. Next, we observe that
	\begin{equation*}
        \begin{aligned}
            \I \I^\ee_1
            &=
		-\ee
            \sum_{k=1}^2
            \int_0^T \int_{\Omega}  P_\ee  
		\frac{\partial \varphi}{\partial x_k}\Div\,\psi^{i,\ee}_k 
            =
		-\ee
            \sum_{k=1}^2
            \int_0^T \int_{\Omega}  P_\ee  
		\frac{\partial \varphi}{\partial x_k}\big( \Div\,\psi^{i,\ee}_k - \underbrace{\Div\,\psi^{i}_k}_{=0} 
            \big)
		\\
            & = 
            -\ee
            \sum_{k=1}^2
            \int_0^T   \int_{\Omega}
		P_\ee \,\Div\Big(\frac{\partial \varphi}{\partial x_k} \big(\psi^{i,\ee}_k-\psi^{i}_k\big)\Big)
		+
		\ee
            \sum_{k=1}^2
            \int_0^T \int_{\Omega}
		P_\ee  \frac{\partial \nabla \varphi}{\partial x_k}  
            \cdot 
            \big(\psi^{i,\ee}_k-\psi^{i}_k\big).
        \end{aligned}
        \end{equation*}
        Hence, by invoking to the relation  \eqref{eq:dual-form-of-Pe-and-pe} of \Cref{prop:pressure}, we obtain 
        \begin{equation*}
	\begin{aligned}		
             \I \I^\ee_1&=
		\underbrace{-
            \ee\sum_{k=1}^2
            \int_0^T   \int_{\Omega_\ee}
		\pre_\ee\,\Div\,\mathcal{R}_{p',\ee} \Big(\frac{\partial \varphi}{\partial x_k}       
             \big(\psi^{i,\ee}_k-\psi^{i}_k\big)
            \Big)}_{\I\I_{1,1}^\ee}
		+
            \underbrace{
		\ee\sum_{k=1}^2
            \int_0^T \int_{\Omega}
		P_\ee  \nabla  \frac{\partial \varphi}{\partial x_k}  
            \cdot 
            \big(\psi^{i,\ee}_k-\psi^{i}_k\big)
            }_{\I\I_{1,2}^\ee}.
	\end{aligned}
	\end{equation*}
        The term $\I \I_{1,2}^\ee$ vanishes as $\ee\to 0$ since $|\I \I_{1,2}^\ee| \leq 2\ee \| P_\ee \|_{L^2(0,T;L^p(\Omega))}\|\nabla^2 \varphi \|_{L^2(0,T;L^\infty(\Omega))} \|  \psi^{i,\ee}_k-\psi^{i}_k \|_{L^{p'}(\Omega)}$ and $\psi^{i,\ee}_k\to \psi^{i}_k$ in $L^{p'}(\Omega)$. 
        Additionally, thanks to the Stokes equation \eqref{maineq1}, we have that
	\begin{equation*}
	\begin{aligned} 
		  \I\I_{1,1}^\ee
            =   
		\ee
            \sum_{k=1}^2
            \bigg\{
		-
            \int_0^T   \int_{\Omega}
		\nabla \uu_\ee 
		:\nabla  \mathcal{R}_{p',\ee} 
		\Big(\frac{\partial \varphi}{\partial x_k} \big(\psi^{i,\ee}_k-\psi^{i}_k\big)\Big)
		-
            \int_0^T   \int_{\Omega}
		(\nabla d_\ee)^T \Delta d_\ee 
		\cdot \mathcal{R}_{p',\ee} 
		\Big(\frac{\partial \varphi}{\partial x_k}
		  \big(\psi^{i,\ee}_k-\psi^{i}_k\big)
            \Big) +\\
            +
            \int_0^T   \int_{\Omega}
		F_\ee 
		\cdot 
            \mathcal{R}_{p',\ee} 
		\Big(\frac{\partial \varphi}{\partial x_k} \big(\psi^{i,\ee}_k-\psi^{i}_k\big)\Big)
            +
            \int_0^T   \int_{\Omega}
		\nabla H_\ee 
		:\nabla  \mathcal{R}_{p',\ee} 
		\Big(\frac{\partial \varphi}{\partial x_k} \big(\psi^{i,\ee}_k-\psi^{i}_k\big)\Big)
            \bigg\},
	\end{aligned} 
        \end{equation*}
        therefore
        \begin{align*}
            \big|\I \I_{1,1}^\ee\big| \leq 
            \sum_{k=1}^2
            \bigg\{
            \Big( 
                \| \nabla \uu_\ee  \|_{L^2((0,T)\times \Omega_\ee )} + 
                \| \nabla H_\ee  \|_{L^2((0,T)\times \Omega_\ee )} 
            \Big)
            \ee 
            |\Omega |^{\frac{1}{p}-\frac{1}{2}}
            \Big\| 
                \nabla 
                \mathcal{R}_{p',\ee} 
		\Big(\frac{\partial \varphi}{\partial x_k} \big(\psi^{i,\ee}_k-\psi^{i}_k\big)\Big)
            \Big\|_{L^2(0,T;L^{p'}(\Omega_\ee))} 
            +
            \\
            +
             \| \nabla d_\ee  \|_{L^\infty(0,T;L^2(\Omega_\ee))}
             \| \Delta d_\ee  \|_{L^2(0,T;L^2(\Omega_\ee))}
             \ee
             \Big\| 
                \mathcal{R}_{p',\ee} 
		\Big(\frac{\partial \varphi}{\partial x_k} \big(\psi^{i,\ee}_k-\psi^{i}_k\big)\Big)
            \Big\|_{L^2(0,T;L^\infty(\Omega_\ee))} 
            +
            \\
            +
            \ee \| F_\ee \|_{L^2((0,T)\times \Omega_\ee)}
            |\Omega |^{\frac{1}{p}-\frac{1}{2}}
            \Big\| 
                \mathcal{R}_{p',\ee} 
		\Big(\frac{\partial \varphi}{\partial x_k} \big(\psi^{i,\ee}_k-\psi^{i}_k\big)\Big)
            \Big\|_{L^2(0,T;L^{p'}(\Omega_\ee))} \bigg\}
        \end{align*}
        Since $p'>2$, $W^{1,p'}(\Omega) \hookrightarrow L^\infty(\Omega)$, we observe that
        \begin{align*}
             \Big\| 
                \mathcal{R}_{p',\ee} 
		&
            \Big(\frac{\partial \varphi}{\partial x_k} 
            \big(\psi^{i,\ee}_k-\psi^{i}_k\big)\Big)
            \Big\|_{L^2(0,T;L^\infty(\Omega_\ee))} 
             =
            \Big\|  
               \widetilde{ \mathcal{R}_{p',\ee} }
		\Big(\frac{\partial \varphi}{\partial x_k} \big(\psi^{i,\ee}_k-\psi^{i}_k\big)\Big)
            \Big\|_{L^2(0,T;L^\infty(\Omega))} \\
            &\leq 
            \tilde C_{p', \Omega}
             \Big\| 
               \nabla  \widetilde{\mathcal{R}_{p',\ee} }
		\Big(\frac{\partial \varphi}{\partial x_k} \big(\psi^{i,\ee}_k-\psi^{i}_k\big)\Big)
            \Big\|_{L^2(0,T;L^{p'}(\Omega))}
            =
            \tilde C_{p', \Omega}
             \Big\| 
               \nabla  \mathcal{R}_{p',\ee} 
		\Big(\frac{\partial \varphi}{\partial x_k} \big(\psi^{i,\ee}_k-\psi^{i}_k\big)\Big)
            \Big\|_{L^2(0,T;L^{p'}(\Omega_\ee))},
        \end{align*}
        where $\tilde C_{p', \Omega}>0$ depends only on $p'$ (i.e. on $p$) and $\Omega$. The notation $\widetilde{ \mathcal{R}_{p',\ee} }$ means that we apply the operator $\mathcal{R}_{p',\ee}$ and then we extend the resulting function by $0$ within each particle. 
        Hence, we invoke the uniform estimates in \Cref{rmk:uniform-estimates-norms-solutinos} and the ones in \Cref{lemma:restriction_operator} to gather
        \begin{align*}
            \big|\I\I_{1,1}^\ee\big| \leq 
            C_{p',\Omega}
            \bigg(
                \EE_0
                |\Omega |^{\frac{1}{p}-\frac{1}{2}}
                +
                \tilde C_{p',\Omega}
                \EE_0^2
            \bigg)
            \sum_{k=1}^2
            \bigg(
            \Big\| 
                    \Big(\frac{\partial \varphi}{\partial x_k} \big(\psi^{i,\ee}_k-\psi^{i}_k\big)\Big)
            \Big\|_{L^2(0,T;L^{p'}(\Omega_\ee))} 
            + \\
            +
            \ee
            \Big\| 
                    \nabla 
                    \Big(\frac{\partial \varphi}{\partial x_k} \big(\psi^{i,\ee}_k-\psi^{i}_k\big)\Big)
            \Big\|_{L^2(0,T;L^{p'}(\Omega_\ee))}
            \bigg).
        \end{align*}
        Since $\psi^{i,\ee}_k\to \psi^{i}_k$ in $L^{p'}(\Omega)$ and $\nabla (\psi^{i,\ee}_k-\psi^{i}_k)$ is uniformly bounded in $L^{p'}(\Omega)$, we finally deduce that $\I\I_{1,1}^\ee\to 0$, as $\ee \to 0$.
    
	\noindent 
        To conclude the proof of \eqref{est:Darcy-IIe}, we shall finally address the limit of $\I\I^\ee_2$ in \eqref{thm:II_thm:conv_of_u}. We recall once more that $\ee P_\ee \rightharpoonup P$ in $L^2(0,T;L^p(\Omega))$ and $\omega^{i,\ee}\stackrel{\ast}{\rightharpoonup} \frac{1}{|Y|}\int_{Y^*}\omega^i(y) dy$. Thus
        \begin{align*}
            \lim_{\ee \rightarrow 0} \I\I^\ee_2 
            &= 
            -
            \lim_{\ee \rightarrow 0} 
            \sum_{k=1}^2
              \bigg(
                \frac{1}{|\Omega|}
                \int_{\Omega}
                \omega^{i,\ee}_k(z)dz
              \bigg)
			 \int_0^T\int_{\Omega}  
              \ee 
              P_\ee
              (t,x)
			 \frac{\partial \varphi}{\partial x_k}
            (t,x)
             dx dt\\
              & = 
              -
              \sum_{k=1}^2
              \bigg(
                \frac{1}{|Y|}
                \int_{Y^*}
                \omega^{i}_k(y)dy
              \bigg)
			 \int_0^T
              \int_{\Omega}  
              P(t,x)
			 \frac{\partial \varphi}{\partial x_k}(t,x)dxdt\\
              & = 
              -
			 \int_0^T
              \int_{\Omega}  
              P(t,x)
              \sum_{k=1}^2
              \bigg\{
              \bigg(
                \frac{1}{|Y|}
                \int_{Y^*}
                \omega^{i}(y)\cdot e^k dy
              \bigg)
			 \frac{\partial \varphi}{\partial x_k}
              (t,x)
              \bigg\}
              dx dt.
        \end{align*}
        Hence, recalling that $\omega^i$ satisfies \eqref{eq:omega}  we finally obtain
        \begin{align*}
             \lim_{\ee \rightarrow 0} \I\I^\ee_2 
             &= 
             -
			 \int_0^T
              \int_{\Omega}  
              P(t,x)
              \sum_{k=1}^2
              \bigg\{
              \bigg(
                \frac{1}{|Y|}
                \int_{Y^*}
                \nabla_y\omega^{i}(y): \nabla_y\omega^{k}(y) dy
              \bigg)
			 \frac{\partial \varphi}{\partial x_k}
              (t,x)
              \bigg\}
              dx dt\\
              &=
              -
			 \int_0^T
              \int_{\Omega}  
              P(t,x)
               \big( \mathbb B \nabla \varphi(t,x)\big)_i
              dxdt.
        \end{align*}
        This concludes the proof of \eqref{est:Darcy-IIe}.

        \noindent 
	Summarizing, we couple the identities in \eqref{est:Darcy-Ie}, \eqref{est:Darcy-Ve}, \eqref{est:Darcy-IIIe},
        \eqref{est:Darcy-IVe},  and
        \eqref{est:Darcy-IIe} to deduce that $(\uu,P)$ satisfies 
	\begin{equation}\label{eq:equation-for-ui-proof}
		\uu_i + \sum_{k=1}^2 \mathbb{B}_{ik} \frac{\partial P}{\partial x_k} = 
            \frac{1}{|Y|}
            \int_{Y^*}\omega^i(y)dy \cdot F + H_i,\qquad \Div\,\uu = 0
	\end{equation}
        in 	$\mathcal{D}'((0,T)\times \Omega)$, for $i = 1,2$. 
        Moreover, since both $u$ and the right-hand side~of \eqref{eq:equation-for-ui-proof} belong to $L^2((0,T)\times \Omega)$, 
        also $\mathbb B \nabla P$ is in $ L^2((0,T)\times \Omega)$. Thus $P$ belongs to $ L^2(0,T;L^p( \Omega))$ with $ \nabla P$ in $ L^2((0,T)\times \Omega)$, which implies in particular that $P$ is also in $L^2((0,T)\times \Omega)$ (although $\ee P_\ee$ does not necessarily weakly converge towards $P$ in this space).
	  Finally
	\begin{equation*}
			\int_0^T \int_{\Omega} \uu \cdot \nabla \psi  = 
			\lim_{\ee\rightarrow 0}\frac 1\ee \underbrace{
			\int_0^T \int_{\Omega}\tilde \uu_\ee \cdot \nabla \psi}_{=0} = 0,\qquad 
			\text{for any}\quad \psi\in \mathcal{C}^\infty([0,T]\times \overline{\Omega}). 
	\end{equation*}	 
	hence the boundary condition $\uu(t,\cdot) \cdot \nu = 0$ on 
        $\partial \Omega$ 
        is satisfied in $H^{-1/2}(\partial \Omega)$, for a.e.~$t\in (0,T)$. 
        This concludes the proof of \Cref{prop:Darcy}.
\end{proof}

\section{Homogenisation of the director equation}\label{sec:director}

\noindent 
In this section, we address the directors $d_{\varepsilon}$ and conclude the proof of \Cref{main_thm}, specifically focusing on part (iii). Our approach mirrors that of \Cref{sec:Darcy}. We consider a general sequence $(\ee_l)_{l\in \mathbb{N}}$, characterized by its monotonic decrease and convergence to zero as $l \to \infty$. We demonstrate the existence of a subsequence $(\ee_{l_k})_{k \in \mathbb{N}}$, for which the function $\tilde{d}_{\varepsilon_{l_k}}$ weakly-$\ast$ converges to a vector function $d$ in $L^\infty((0,T) \times \Omega)$. 
Notably, $d$ may initially depend on the specific subsequence $(\ee_{l_k})_{k \in \mathbb{N}}$. Nevertheless, as detailed in the forthcoming proposition, $d$ is ultimately identified as the solution to equation \eqref{eq:d}. Thus, the arbitrariness of the sequence $(\ee_l)_{l\in \mathbb{N}}$ leads us to the general conclusion that $\tilde{d}_\ee \rightharpoonupstar d$, as $\varepsilon \to 0$. 

\noindent 
We recall from the assumptions of \Cref{main_thm} that the initial data $\tilde{d}_{\ee, \mathrm{in}}$ weakly-$\ast$ converges to $d_{\mathrm{in}}$ in $L^\infty(\Omega)$ and that $d_{\mathrm{in}}$ belongs to $H^1(\Omega)\cap L^\infty(\Omega)$ from \Cref{rmk:d-in-is-also-in-H1}. We prove the following result.
\begin{prop}\label{prop:convergence-of-d}
    For any sequence $(\ee_l)_{l\in \mathbb N}\subset ]0, \infty[$ with $\ee_l \to 0$, there exists a subsequence  $(\ee_{l_k})_{k\in \mathbb N}$ such that $ \tilde d_{l_k} \rightharpoonupstar d$ in $L^\infty((0,T)\times \Omega)$. The vector $d$ satisfies
    \begin{equation*}
        d\in L^\infty((0,T)\times \Omega) \cap L^\infty(0,T;H^1(\Omega))\quad \text{with}\quad  
        \partial_t d,\,\Delta d \in L^2((0,T)\times \Omega)
    \end{equation*}
    and is the unique weak solution of 
    \begin{equation}\label{eq:d-equation-in-proposition}
        \partial_t d - \Div\Big( \frac{1}{\theta }\mathbb A \nabla d \Big) = - (|d|^2-1)d\quad 
        \text{in }(0,T)\times \Omega,
        \qquad 
        \nu \cdot \mathbb A \nabla d = 0 \quad \text{on }(0,T)\times \partial \Omega,
        \qquad 
        d_{|t = 0} = d_{\rm in}\quad\text{in } \Omega,
    \end{equation}
    where $\nu$ is the normal vector on $\partial \Omega$ and $\mathbb{A} \in \mathbb{R}^{2 \times 2}$ is the matrix defined in \eqref{eq:matrices-AB}.
\end{prop}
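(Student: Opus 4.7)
The proof parallels that of Proposition 5.1, but the non-homogeneous traces of $d_\ee$ on the colloidal surfaces compel us to rely on the adapted Aubin--Lions theorem of Section 4 rather than on a direct weak compactness of extensions. The plan is first to gather the uniform estimates required by Proposition 4.2. From Remark 2.5 and the weak maximum principle, $d_\ee$, $\nabla d_\ee$ and $\Delta d_\ee$ are uniformly bounded in $L^\infty((0,T)\times\Omega_\ee)$, $L^\infty(0,T;L^2(\Omega_\ee))$ and $L^2((0,T)\times\Omega_\ee)$, respectively. Reading $\partial_t d_\ee$ off \eqref{maineq3}, the diffusion and cubic-reaction contributions sit in $L^2((0,T)\times\Omega_\ee)$ uniformly, while for the transport $u_\ee\cdot\nabla d_\ee$ I would combine $\|u_\ee\|_{L^2(0,T;H^1_0(\Omega_\ee))}\leq\EE_0$ with a cellwise two-dimensional Sobolev embedding---whose constant is controlled in $\ee$ through Lemma 3.2---to obtain a uniform bound in $L^2(0,T;L^s(\Omega_\ee))$ for some $s\in(1,2)$. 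Proposition 4.2 then yields a subsequence $(\ee_{l_k})$ along which $\tilde d_{l_k}\rightharpoonupstar\theta d$ in $L^\infty((0,T)\times\Omega)$ for some $d\in L^\infty\cap L^\infty(0,T;H^1(\Omega))$ with $\partial_t d\in L^2(0,T;L^s(\Omega))$, and the bilinear identity \eqref{eq:lemma-compactness-non-linear} holds along this subsequence.

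The next step is to pass to the limit in the weak formulation of \eqref{maineq3} tested against $\eta\varphi$ with $\eta\in\mathcal{C}^\infty(\overline\Omega)$ and $\varphi\in\mathcal{C}^\infty_c([0,T))$. The time-derivative and cubic-reaction terms converge to $\theta$ times the expected limits by applying \eqref{eq:lemma-compactness-non-linear} with $g_j\equiv 1$ and exponents $m=1,3$, together with the assumed weak-$*$ convergence $\tilde d_{\ee,\mathrm{in}}\rightharpoonupstar\theta d_{\mathrm{in}}$ for the initial datum. The transport term vanishes outright: \eqref{eq:estimate-of-tildeu-ee-Poinc} gives $\|\tilde u_\ee\|_{L^2((0,T)\times\Omega)}=O(\ee)$ while $\widetilde{\nabla d_\ee}$ is uniformly bounded in $L^\infty(0,T;L^2(\Omega))$, so $\tilde u_\ee\cdot\widetilde{\nabla d_\ee}\to 0$ in $L^2(0,T;L^1(\Omega))$.

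The main obstacle is the diffusion term, where one must identify the weak limit of $\widetilde{\nabla d_\ee}$ and produce the effective matrix $\mathbb A$. I plan to use Tartar's oscillating test-function method based on the correctors $\psi_j$ from \eqref{eq:matrices-AB}. Writing $\psi_j(y)=y_j+\chi_j(y)$ with $\chi_j$ periodic, the rescaled functions $w_\ee^j(x):=\ee\,\tilde\psi_j(x/\ee)$ satisfy $w_\ee^j\to x_j$ uniformly and have gradients that are bounded, $Y$-periodic and weak-$*$ convergent to $e_j$. Testing the rescaled cell problem for $w_\ee^j$ against $\eta\varphi\,d_\ee$ and testing \eqref{maineq3} against $\eta\varphi\,w_\ee^j$, the mismatched cross-integrals $\int\nabla d_\ee\cdot\nabla w_\ee^j\,\eta\varphi$ cancel upon subtraction, while the Neumann condition on $\partial\mathcal T_k^\ee$ eliminates the interior boundary contributions. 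The remaining limit is identified through \eqref{eq:lemma-compactness} applied with $f_{l_k}=d_{l_k}$ and with $g_j$ a component of $\nabla w^j_{l_k}$, yielding
\begin{equation*}
\lim_{k\to\infty}\int_0^T\!\!\int_{\Omega_{l_k}}\!\nabla d_{l_k}\cdot\nabla\eta\,\varphi
\;=\;\int_0^T\!\!\int_{\Omega}(\mathbb A\nabla d)\cdot\nabla\eta\,\varphi.
\end{equation*}
Combining all contributions and dividing by $\theta$ produces \eqref{eq:d-equation-in-proposition}, the Neumann condition $\nu\cdot\mathbb A\nabla d=0$ on $\partial\Omega$ emerging from the freedom to choose $\eta$ with non-zero trace on $\partial\Omega$. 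Uniqueness of the limit then follows from a standard Gronwall estimate on the $L^2$-difference of two bounded weak solutions, using the $L^\infty$-bound on $d$ to absorb the cubic term, and forces the entire original sequence to converge to $d$.
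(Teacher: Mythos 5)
Your proposal follows essentially the same route as the paper's proof: the same uniform bounds (reading $\partial_t d_\ee$ off \eqref{maineq3} in $L^2(0,T;L^s(\Omega_\ee))$), the adapted Aubin--Lions results of Section~4 to extract the weak-$\ast$ limit and to pass to the limit in products, the Poincar\'e inequality of \Cref{Poincare_ineq} to make the transport term vanish, and Tartar's oscillating correctors $\psi_i$ (harmonic in the perforated cells, Neumann on the particle boundaries) to identify the limiting flux and the matrix $\mathbb A$, followed by uniqueness for \eqref{eq:d} to upgrade subsequence convergence. Two technical points in your write-up deserve attention; the first is a genuine (though fixable) misstep.

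For the cubic term, applying \eqref{eq:lemma-compactness-non-linear} with $g_j\equiv 1$ and $m=1,3$ only gives the limits of the componentwise powers $d_{\ee,i}$ and $d_{\ee,i}^3$; the nonlinearity $(|d_\ee|^2-1)d_\ee$ also contains cross terms such as $d_{\ee,1}^2\,d_{\ee,2}$, which are not of the form $f^m$ against a fixed weight, so your recipe as stated does not cover them. The paper uses the same tool with a different pairing: take $f_l=d_{\ee,i}$, $m=2$, and let the \emph{other weakly convergent factor} play the role of $g$, namely $g_l=\varphi\cdot d_{\ee}$ with $\tilde g_l\rightharpoonupstar \theta\,\varphi\cdot d$, so that \Cref{thm:Aubin-Lions-Linfty} yields $\int\phi\, d_{\ee,i}^2\,(\varphi\cdot d_\ee)\to\theta\int\phi\, d_i^2\,(\varphi\cdot d)$, and summing over $i$ gives \eqref{eq:weak-convergence-d^3-d}; you should phrase the step this way (or treat each cross term with $g$ equal to the other component). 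Two smaller remarks: the paper bounds the transport contribution to $\partial_t d_\ee$ simply via the null extension, $\|\tilde u_\ee\|_{L^2(0,T;L^4(\Omega))}\leq C_\Omega\|\nabla u_\ee\|_{L^2((0,T)\times\Omega_\ee)}$, using the fixed-domain embedding $H^1_0(\Omega)\hookrightarrow L^4(\Omega)$, which avoids your cellwise-embedding detour (your route can be made to work since $u_\ee$ vanishes on $\partial\mathcal T^\ee_k$, but it is unnecessary); and the final identification of $\mathbb A$ in your displayed limit rests on the algebraic identity $e^j\cdot\fint_{Y^*}\nabla_y\psi_i=\fint_{Y^*}\nabla_y\psi_i\cdot\nabla_y\psi_j$, obtained by integrating by parts against the cell problem, which must be proved (the paper does this explicitly) before the averaged corrector gradients can be rewritten as $\tfrac1\theta\mathbb A$.
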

\begin{proof}
With an abuse of notation, we omit the index $l\in \mathbb N$ from the sequence $(\ee_l)_{l \in \mathbb{N}}$ and any associated subsequences, and instead represent $\ee_l \to 0$ as $l\to \infty$ simply by $\ee \to 0$.  The functions $\tilde d_\ee$ have norms in $L^\infty((0,T)\times \Omega)$ uniformly bounded by $1$, in accordance with the maximum principle satisfied by \Cref{maineq3} and with $\| d_{\ee,\rm in}\|_{L^\infty(\Omega_\ee)}\leq 1$, for any $\ee>0$. Similarly, the extensions of the gradients $\widetilde{\nabla d_\varepsilon}$ are uniformly bounded in $L^\infty(0,T; L^2(\Omega))$, thanks to the uniform estimates of \Cref{rmk:uniform-estimates-norms-solutinos}. Consequently, there exist a vector function $d \in L^\infty((0,T) \times \Omega)$ and a tensor function $\Xi \in L^\infty(0,T; L^2(\Omega))$ such that 
\begin{equation}\label{eq:weak-convergence-of-de-and-nablade}
    \tilde d_{\ee} \stackrel{\ast}{\rightharpoonup} \theta\, d,
    \quad 
    \text{weakly-}\ast \quad \text{in}\quad  L^\infty((0,T)\times \Omega),
    \qquad 
    \widetilde{\nabla d_{\ee}}
    \stackrel{\ast}{\rightharpoonup}
    \theta\,
    \Xi 
    \quad 
    \text{weakly-}\ast \quad \text{in}\quad  L^\infty(0,T; L^2(\Omega)),
\end{equation}
up to a subsquence, as $\ee \to 0$. We first assert that also the following weakly-$\ast$ convergence holds true:
\begin{equation}\label{eq:weak-convergence-d^3-d}
    (|\tilde d_\ee|^2-1)\tilde d_\ee \rightharpoonupstar \theta (|d|^2-1) d 
    \qquad \text{weakly-}\ast \quad \text{in}\quad  L^\infty((0,T)\times \Omega),
\end{equation}
up to a subsequence, as $\ee \to 0$. To this end, we invoke \Cref{lemma:compactness_Omega_eps} and \Cref{thm:Aubin-Lions-Linfty} and we observe that \Cref{maineq3} implies
\begin{equation*}
\begin{aligned}
	&\| \partial_t d_{\ee} \|_{L^2(0,T;L^\frac{4}{3}(\Omega_{\ee}))}
        =
        \| 
            - u_\ee \cdot \nabla d_\ee + \Delta d_\ee -  (| d_{\ee} |^2-1)d_{\ee}
        \|_{L^2(0,T;L^\frac{4}{3}(\Omega_{\ee}))}
        \\
	&\leq 
	\| \uu_{\ee} 
        \|_{L^2(0,T;L^4(\Omega_{\ee}))}
        \| \nabla d_{\ee} 
        \|_{L^\infty(0,T; L^2(\Omega_{\ee}))}
	\!+\! 
        \| 
            \Delta  d_{\ee} 
        \|_{L^2(0,T;L^\frac{4}{3}(\Omega_{\ee}))}
	\!\!+
        \big( 
            \| d_\ee \|_{L^\infty((0,T)\times \Omega_\ee)}^3 
            \!+\!
            \| d_\ee \|_{L^\infty((0,T)\times \Omega_\ee)}
        \big)
	T^{\frac{1}{2}}
        |\Omega_\ee|^{\frac{3}{4}}\\
	&\leq 
	\| \tilde \uu_{\ee} \|_{L^2(0,T;L^4(\Omega))}
	\| \nabla d_{\ee} 
        \|_{L^\infty(0,T; L^2(\Omega_{\ee}))}
	+ 
        |\Omega |^{\frac{1}{4}}
        \| \Delta  d_{\ee} \|_{L^2(0,T;L^2(\Omega_{\ee}))}
	+
        2 
        T^{\frac{1}{2}}
        |\Omega|^{\frac{3}{4}}
        .
\end{aligned}
\end{equation*}
Since $H^1_0(\Omega)$ is continuously embedded into $L^4(\Omega)$ thanks to Rellich-Kondrachov Theorem, there exists a constant $C_\Omega>0$, depending only on $\Omega$, such that $ \| \tilde \uu_{\ee} \|_{L^2(0,T;L^4(\Omega))} \leq C_\Omega \| \nabla \tilde \uu_{\ee} \|_{L^2((0,T)\times \Omega)} = 
C_\Omega \| \nabla \uu_\ee \|_{L^2( (0,T) \times \Omega_\ee)}$. Hence, invoking the uniform estimates of \Cref{rmk:uniform-estimates-norms-solutinos}, we gather that
\begin{equation*}
\begin{aligned}
    \| \partial_t d_{\ee} 
    \|_{L^2(0,T;L^\frac{4}{3}(\Omega_{\ee}))}
    &\leq 
        C_\Omega\| \nabla \uu_{\ee} \|_{L^2((0,T)\times \Omega_\ee)}
	\| \nabla d_{\ee} 
        \|_{L^\infty(0,T; L^2(\Omega_{\ee}))}
	+ 
        |\Omega |^{\frac{1}{4}}
        \| \Delta  d_{\ee} \|_{L^2(0,T;L^2(\Omega_{\ee}))}
	+
        2 
        T^{\frac{1}{2}}
        |\Omega|^{\frac{3}{4}}
    \\
    &\leq 
    C_\Omega \EE_0^2 + |\Omega|^\frac{1}{4} \EE_0 + 
    2
    T^{\frac{1}{2}}
    |\Omega|^{\frac{3}{4}}.
\end{aligned}
\end{equation*}
This implies  that the  $L^2(0,T;L^\frac{4}{3}(\Omega_{\ee}))$-norms of $\partial_t d_\ee$ are uniformly bounded  in $\ee>0$. In particular, each component $d_{\ee,i}$, $i = 1,2$, of $d_\ee = (d_{\ee,1}, d_{\ee,2})^T$ satisfies the assumptions of \Cref{lemma:compactness_Omega_eps} and \Cref{thm:Aubin-Lions-Linfty} with $f_l = d_{\ee,i}$ (i.e.~$f_l = d_{\ee_l,i}$), $q = 2$, $s = 4/3$ and $r = 2$. Next, we consider a general vector function $\varphi $ with components in $\mathcal{C}^\infty([0,T]\times \bar{\Omega})$ and we set $g_l := \varphi \cdot d_\ee =  \varphi \cdot d_{\ee_l}$. Thanks to \eqref{eq:weak-convergence-of-de-and-nablade} and up to a subsequence, $g_l \rightharpoonupstar \theta \,d\cdot \varphi $ in $L^\infty((0,T)\times \Omega)$ (hence also weakly-$\ast$ in $L^1(0,T;L^\infty(\Omega))$). A direct application of \Cref{thm:Aubin-Lions-Linfty} with $m = 2$ and of  \Cref{lemma:compactness_Omega_eps} yields
\begin{equation*}
    \lim_{\ee \to 0} 
    \int_0^T \int_\Omega (|\tilde d_\ee|^2 -1) \tilde  d_\ee \cdot  \varphi 
    = 
    \lim_{\ee \to 0} 
    \int_0^T \int_{\Omega_\ee} 
    \Big(
     \sum_{i = 1}^2 | d_{\ee,i}|^2 -1
     \Big)  d_\ee \cdot  \varphi
    =
    \theta     
    \int_0^T \int_{\Omega} \Big(  \sum_{i = 1}^2 |d_i |^2 -1\Big)  d \cdot  \varphi,
\end{equation*}
up to a subsequence. The arbitrariness of the function $\varphi$ in $\mathcal{C}^\infty((0,T)\times \Omega)$ leads in particular to the weak-$\ast$ convergence in \eqref{eq:weak-convergence-d^3-d}. Additionally, always from \Cref{lemma:compactness_Omega_eps}, the director $d \in L^\infty((0,T)\times \Omega)$ satisfies also $\nabla d \in L^\infty(0,T;L^\frac{4}{3}(\Omega))$ and $\partial_t d \in L^2(0,T;;L^\frac{4}{3}(\Omega))$.

\noindent
Next we consider a vector function $\varphi \in \mathcal{C}^\infty([0,T]\times \overline{\Omega})$ with additionally $\supp \varphi \subset [0,T)\times \overline{\Omega}$ and we apply the weak formulation of \Cref{maineq3} to the test function $\varphi_{|\Omega_{\ee}}$: 
\begin{equation}\label{eq_depsk}
	\begin{aligned}
		-\int_0^T\int_{\Omega}  \tilde d_{\ee} \cdot 
              \partial_t \varphi - 
		\int_{\Omega}   \tilde d_{\ee ,\rm in} \cdot 
            \varphi
		&+ 
            \int_0^T\int_{\Omega} 
            \big(
                \tilde u_{\ee}
                \cdot 
                \widetilde{\nabla d_{\ee}}
            \big)\cdot 
            \varphi
		+
            \int_0^T\int_{\Omega} 
            \widetilde{\nabla d_{\ee}}
            :
            \nabla \varphi 
		= 
		-
            \int_0^T\int_{\Omega} 
		(|\tilde d_{\ee}|^2-1) \tilde d_{\ee}      
            \cdot 
            \varphi.
	\end{aligned}
\end{equation}
We hence proceed to send $\ee \to 0$ (up to a subsequence) and we first observe that the third integral at the left-hand side of \eqref{eq_depsk} converges towards $0$:
\begin{equation*}
    \begin{aligned}
        \bigg|
        \int_0^T\int_{\Omega} 
        \big(
            \tilde u_\ee\cdot 
            \widetilde{\nabla d_\ee}
        \big)\cdot 
        \varphi        
        \bigg|
        &\leq 
        \| \tilde u_\ee \|_{L^2((0,T)\times \Omega)}
        \Big\| \widetilde{\nabla d_\ee} 
        \Big\|_{L^\infty(0,T; L^2(\Omega))}
        \| \varphi \|_{L^2(0,T; L^\infty(\Omega))}
        \\
        &\leq
        \| u_\ee \|_{L^2((0,T)\times \Omega_\ee)}
        \| \nabla d_\ee
        \|_{L^\infty(0,T; L^2(\Omega_\ee))}
        \| \varphi \|_{L^2(0,T; L^\infty(\Omega))}.
    \end{aligned}
\end{equation*}
Hence, invoking the Poincar\'e's inequality of \Cref{Poincare_ineq}, there exists a constant $C_{2, \Omega}>0$ which depends only on $\Omega$ such that
\begin{align*}
        \bigg|
        \int_0^T\int_{\Omega} 
        \big(
            \tilde u_\ee\cdot 
            \widetilde{\nabla d_\ee}
        \big)\cdot 
        \varphi        
        \bigg|
        &\leq
        C_{2, \Omega}\ee 
        \|  \nabla u_\ee \|_{L^2((0,T)\times \Omega_\ee)}
        \| \nabla d_\ee 
        \|_{L^\infty(0,T; L^2(\Omega_\ee))}
        \| \varphi \|_{L^2(0,T; L^\infty(\Omega))}
        \\
        &\leq
        C_{2, \Omega}\ee 
        \EE_0^2
        \| \varphi \|_{L^2(0,T; L^\infty(\Omega))} \to 0\quad \text{as} \quad \ee \to 0.
\end{align*}
We are therefore in the condition to pass to the limit in \eqref{eq_depsk} as $\ee \to 0$. 
Invoking once more \eqref{eq:weak-convergence-of-de-and-nablade} and \eqref{eq:weak-convergence-d^3-d}, we obtain
\begin{equation}\label{limit-of-eq_depsk}
	\begin{aligned}
		-
            \theta 
            \int_0^T\int_{\Omega}  d \cdot 
              \partial_t \varphi - 
		\theta
            \int_{\Omega}   d_{\rm in} 
            \cdot 
            \varphi
		+
           \theta
            \int_0^T\int_{\Omega} 
            \Xi
            :
            \nabla \varphi 
		= 
		-
            \theta
            \int_0^T\int_{\Omega} 
		(|d|^2-1)
            d
            \cdot 
            \varphi.
	\end{aligned}
\end{equation}
Next, we claim that $\Xi = \mathbb A \nabla d$ in $L^\infty(0,T;L^\frac{4}{3}(\Omega))$. 
To this end, we consider two scalar functions $\chi_1,\,\chi_2 $ in  $H^2(Y^*)$ that are solutions of the following equations: 
\begin{equation*}
        \left\{
        \begin{aligned}
	  &\begin{alignedat}{4}
		&- \Delta \chi_i = 0\qquad 
            &&
            \text{in }Y^*,\\
		&\mu \cdot \nabla_y \chi_i = -e^i \cdot \mu
            \qquad 
            &&
            \text{on }\partial \mathcal{T},
	 \end{alignedat}
      \\
      &\;\tilde 
            \chi_i \text{ is periodic in }Y\text{ and is average free},
        \end{aligned}
        \right.
\end{equation*}
for $i = 1, 2$, where $\mu = \mu(y)$ denotes the inner normal vector on the boundary $\partial \mathcal{T}$. Given that $\tilde{\chi_1}$ and $\tilde{\chi_2}$ are periodic in $Y$, we extend them periodically across $\mathbb{R}^2$. We thus define the function $\psi_i(y) = y \cdot e^i + \tilde \chi_i(y)$ for all $y \in \mathbb R^2$, ensuring that $\mu \cdot \nabla_y \psi_i = 0$ at any boundary $\partial \mathcal{T}+ k$, with $k \in \mathbb Z^2$. For any $\ee>0$, we then introduce the rescaled function $\psi_{\varepsilon,i}: \mathbb R^2 \mapsto \mathbb R$ as follows:
\begin{equation*}
    \psi_{\ee,i}(x) := \ee \, \psi_i \left( \frac{x}{\ee}\right)
    = 
    x\cdot e_i
    +
    \ee \tilde \chi_i \left( \frac{x}{\ee}\right),\qquad x \in \mathbb R^2.
\end{equation*}
The functions $\psi_{\ee,i}$ are uniformly bounded in $H^1(\Omega_\ee)\cap L^\infty (\Omega_\ee)$, for $\ee>0$. Furthermore, making use of a standard two-scale method, 
\begin{equation}\label{weak-conv-of-psi-nabla-psi}
	\tilde \psi_{\ee,i} 
        \stackrel{\ast}{\rightharpoonup} \theta \,x \cdot e^i \quad \text{weakly-$\ast$ in } L^\infty(\Omega ),
	\qquad
	\widetilde{\nabla \psi_{\ee,i}} 
        \rightharpoonup  
        \theta  
        \fint_{Y^*} 
        \nabla_y \psi_{i}(y)dy
	  \quad
        \text{weakly in }L^2(\Omega).
\end{equation}
Additionally, we remark that for any $i,j \in \{ 1,2\}$
\begin{align*}
    e^j\cdot \int_{Y^*} \nabla_y \psi_i 
    &=
    \int_{Y^*} \partial_{y_j} \psi_i 
    =
    \int_{Y^*}  \partial_{y_j} \psi_i   - 
    \int_{Y^*}  \underbrace{\Delta_y\chi_i }_{=0} \chi_j \\
    & =
    \int_{Y^*}  \partial_{y_j} \psi_i   -
    \int_{\partial \mathcal{T}} (\mu \cdot \nabla_y \chi_i ) \chi_j   +
    \int_{Y^*} \nabla_y \chi_i  \cdot \nabla_y \chi_j  \\
    & =
    \int_{Y^*}  \partial_{y_j} \psi_i   +
    \int_{\partial \mathcal{T}} (\mu \cdot e^i) \chi_j  +
    \int_{Y^*} \nabla_y \chi_i \cdot \nabla_y \psi_j -
    \int_{Y^*} \nabla_y \chi_i\cdot e^j
    \\
    &= \int_{Y^*}  \partial_{y_j} \psi_i   +
    \int_{\partial \mathcal{T}} (\mu \cdot e^i) \chi_j  +
    \int_{Y^*} \nabla_y \psi_i \cdot \nabla_y \psi_j - 
    \int_{Y^*} e^i \cdot \nabla_y \psi_j
    -
    \int_{Y^*}\partial_{y_j} \psi_i 
    +
    \int_{Y^*} e^i \cdot e^j\\
    &=  
    \int_{\partial \mathcal{T}} (\mu \cdot e^i) \chi_j  +
    \int_{Y^*} \nabla_y \psi_i \cdot \nabla_y \psi_j - 
    \int_{Y^*} \partial_{y_i} \psi_j
    +
    \int_{Y^*} e^i \cdot e^j\\
    &=  
    \int_{\partial \mathcal{T}} (\mu \cdot e^i) \chi_j  +
    \int_{Y^*} \nabla_y \psi_i \cdot \nabla_y \psi_j - 
    \int_{Y^*} \partial_{y_i} \chi_j 
    =
    \int_{Y^*} \nabla_y \psi_i \cdot \nabla_y \psi_j.
\end{align*}
Therefore, for any $i,j\in \{1,2\}$, we deduce that
\begin{equation}\label{weak-conv-of-psi-nabla-psi-2}
    e^j\cdot \widetilde{\nabla \psi_{\ee,i}} 
    \rightharpoonup 
    \theta  e^j\cdot 
    \fint_{Y^*} 
    \nabla_y \psi_{i}
    =
    \theta 
    \fint_{Y^*} 
    \nabla_y \psi_i \cdot \nabla_y \psi_j 
    = 
    \frac{1}{|Y|}
    \int_{Y^*} 
    \nabla_y \psi_i \cdot \nabla_y \psi_j
    =
    \mathbb A_{ij}
    \quad \text{weakly in }
    L^2(\Omega).
\end{equation}
Next, we consider a vector function $\phi$ with components in $\mathcal{C}^\infty_c([0,T)\times \Omega)$ and we apply the weak formulation in \eqref{eq_depsk} to $\varphi = \phi \psi_{\ee, i}$: 
\begin{equation}\label{eq:d-equation-applied-to-psi-phi-1}
\begin{aligned}
	-
        \int_0^T\int_{\Omega_\ee} 
        d_\ee           
        \cdot \partial_t \phi \, \psi_{\ee,i}
        -
        \int_{\Omega_\ee} 
        d_{\ee, \rm in} \cdot \phi \psi_{\ee,i}  
        &+
	\int_0^T\int_{\Omega_\ee}
        (\uu_\ee\cdot \nabla d_{\ee})\cdot \phi \psi_{\ee,i} 
        +
        \\
        &+
	\int_0^T\int_{\Omega_\ee} 
        \nabla d_\ee: \nabla 
        (\phi \psi_{\ee,i}) = 
        -
	\int_0^T\int_{\Omega_\ee} 
        (|d_\ee|^2-1)d_\ee
        \cdot 
        \phi
        \psi_{\ee,i}
        .
\end{aligned}
\end{equation}
The third integral can be decomposed into 
\begin{equation*}
\begin{aligned}
    \int_0^T\int_{\Omega_\ee} 
    \nabla d_\ee: 
    &\nabla (\phi \psi_{\ee,i})
    = 
    \int_0^T\int_{\Omega_\ee} 
    \nabla d_\ee: 
    \nabla \phi 
    \,
    \psi_{\ee,i} 
    +
    \int_0^T\int_{\Omega_\ee} 
    \nabla d_\ee: 
    \nabla  \psi_{\ee,i}
    \otimes 
    \phi
    \\
    &=
    \int_0^T\int_{\Omega_\ee} 
     \nabla d_\ee: 
    \nabla \phi 
    \,
    \psi_{\ee,i} 
    +
    \int_0^T
    \int_{\Omega_\ee} 
    \nabla\big( \phi \cdot  d_\ee\big)
    \cdot 
    \nabla  \psi_{\ee,i}
    -
    \int_0^T
    \int_{\Omega_\ee} 
    \nabla \phi: 
    \nabla  \psi_{\ee,i}
    \otimes 
    d_\ee
\end{aligned}
\end{equation*}
The second integral on the right-hand side is null, since $-\Delta \psi_{\ee,i} = 0$ in $\Omega_\ee$, $\nu \cdot \nabla \psi_{\ee,i} = 0$ on $ \partial \Omega_\ee\setminus \partial \Omega$ and $d_\ee \cdot \phi $ is identically null on a neighborhood of $\partial \Omega$. Hence, we obtain
\begin{equation}\label{eq:nablad:nabla(phipsi)}
    \int_0^T\int_{\Omega_\ee} 
    \nabla d_\ee: 
    \nabla 
    (\phi \psi_{\ee,i})
    =
    \int_0^T\int_{\Omega_\ee} 
    \nabla d_\ee: 
    \nabla \phi
    \,
    \psi_{\ee,i} 
    -
    \int_0^T\int_{\Omega_\ee} 
    \nabla \phi 
    :
    \nabla  \psi_{\ee,i}\otimes d_\ee.
\end{equation}
Coupling \eqref{eq:d-equation-applied-to-psi-phi-1} together with  \eqref{eq:nablad:nabla(phipsi)} and integrating by parts in time, 
we deduce that
\begin{equation}\label{eq:d-equation-applied-to-psi-phi-2}
\begin{aligned}
	-
        \int_0^T\int_{\Omega_\ee} 
        d_\ee         
        \cdot\partial_t \phi 
        \,
        \psi_{\ee,i}
        &-
        \int_{\Omega_\ee} 
        d_{\ee,\rm in}           
        \cdot 
        \phi_{|t = 0}
        \psi_{\ee,i}
        +
	\int_0^T
        \int_{\Omega_\ee}
        \big(
            \uu_\ee\cdot 
            \nabla d_\ee
        \big)\cdot 
        \phi 
        \psi_{\ee,i} 
        +
        \\
        &
        +
        \int_0^T\int_{\Omega_\ee} 
        \nabla d_\ee: 
        \nabla \phi 
        \,
        \psi_{\ee,i}
        -
        \int_0^T\int_{\Omega_\ee} 
        \nabla \phi
        :
        \nabla
        \psi_{\ee,i}
        \otimes 
        d_\ee
        = 
        -
	\int_0^T
        \int_{\Omega_\ee} 
        (|d_\ee|^2-1)
        d_\ee
        \cdot 
        \phi
        \psi_{\ee,i}
        .
\end{aligned}
\end{equation}
We next analyse each term of \eqref{eq:d-equation-applied-to-psi-phi-2}, as $\ee \to 0$. By invoking \Cref{Poincare_ineq} and \Cref{rmk:uniform-estimates-norms-solutinos}, we first remark that the convective term vanishes:
\begin{equation*}
\begin{aligned}
    \Big|
    \int_0^T\int_{\Omega}
    &\big(
        \uu_\ee\cdot 
         \nabla d_\ee 
    \big)\cdot \phi 
     \psi_{\ee,i} 
    \Big|
    \leq
    \| u_\ee \|_{L^2((0,T)\times \Omega_\ee)}
    \| \nabla d_\ee \|_{L^\infty(0,T;L^2(\Omega_\ee))}
    \| \phi \|_{L^2(0,T;L^\infty(\Omega))}
    \| \psi_{\ee,i} \|_{L^\infty(\Omega_\ee)}
    \\
    &\leq
    C_{2,\Omega} \ee
    \| \nabla u_\ee     \|_{L^2((0,T)\times \Omega_\ee)}
    \| \nabla d_\ee     \|_{L^\infty(0,T;L^2(\Omega_\ee))}
    \| \phi             \|_{L^2(0,T;L^\infty(\Omega))}
    \Big(
        \| x\cdot e^i \|_{L^\infty(\Omega_\ee)} 
        +
        \ee 
        \| \tilde \chi_{\ee,i}  \|_{L^\infty(\Omega_\ee)} 
    \Big)
    \\
    &\leq
    C_{2,\Omega} \ee
    \EE_0^2
    \| \phi             \|_{L^2(0,T;L^\infty(\Omega))}
    \Big(
        |\Omega| \max_{x \in \Omega} |x| 
        +
        \ee 
        \| \chi_{i}  \|_{L^\infty(Y^*)} 
    \Big)
    \to 0 \qquad \text{as }\ee \to 0.
\end{aligned}
\end{equation*}
We therefore turn our attention to the remaining terms in \eqref{eq:d-equation-applied-to-psi-phi-2}, employing \Cref{lemma:compactness_Omega_eps} repeatedly.

\noindent 
For the first integral in \eqref{eq:d-equation-applied-to-psi-phi-2} we employ \Cref{lemma:compactness_Omega_eps} with $q = 2$, $ s = \frac{4}{3} $, $r = 2$, $f_l = d_{\ee} = d_{\ee_l}$ and $ g_l = \partial_t \phi \psi_{\ee,i} = \partial_t \phi \psi_{\ee_l,i}$. Hence, recalling \eqref{eq:weak-convergence-of-de-and-nablade} and \eqref{weak-conv-of-psi-nabla-psi},
\begin{equation*}
    \lim_{\ee \to 0}
    \int_0^T
    \int_{\Omega_\ee} 
    d_\ee           
    \cdot
    \partial_t \phi 
    \psi_{\ee,i}
    = 
    \theta 
    \int_0^T\int_{\Omega} 
    d  \cdot\partial_t \phi (x\cdot e^i).
\end{equation*}
up to a subsequence. Next, we consider the second integral in \eqref{eq:d-equation-applied-to-psi-phi-2} and we remark that the $H^1(\Omega_\ee)$-norms of $d_{\ee, \rm in}$ are uniformly bounded by the assumptions of \Cref{main_thm}, while $\psi_{i, \ee} \rightharpoonup  \theta\, x\cdot e_i$ weakly in $L^2(\Omega)$ thanks \eqref{weak-conv-of-psi-nabla-psi}. Applying \Cref{lemma:compactness_Omega_eps} with $f_l = d_{\ee, \rm in} = d_{\ee_l, \rm in}$, $g_l = \psi_{i, \ee_l}$ (considering them as constant functions in time), $q = 2$, $ s= 2$ and $r = 2$, we gather
\begin{equation*}
    \lim_{\ee \to 0}
    \int_{\Omega_\ee} 
    d_{\ee, \rm in} \cdot \phi_{|t = 0}\,\psi_{\ee, i} 
    =
    \lim_{\ee \to 0}
    \frac{1}{T}
    \int_0^T
    \int_{\Omega_\ee} 
    d_{\ee, \rm in} \cdot \phi_{|t = 0}\,\psi_{\ee, i} 
    =
    \frac{\theta}{T}
    \int_0^T
    \int_{\Omega} 
    d_{\rm in} \cdot \phi_{|t = 0}\,(x\cdot e_i) 
    =
    \int_{\Omega} 
    d_{\rm in} \cdot \phi_{|t = 0}\,(x\cdot e_i).
\end{equation*}
Next, we address the fourth integral on the left-hand side of  \eqref{eq:d-equation-applied-to-psi-phi-2} and we apply once more \Cref{lemma:compactness_Omega_eps}. This time we set $f_l = \psi_{\ee,i} = \psi_{\ee_l,i}$ and $g_l$ each component of the tensor $\nabla d_{\ee} = \nabla d_{\ee_l}$ to gather
\begin{equation*}
    \lim_{\ee \to 0 }
    \int_0^T
    \int_{\Omega_\ee} 
    \nabla d_\ee: 
    \nabla \phi 
    \,
    \psi_{\ee,i}
    = 
    \theta 
    \int_0^T
    \int_{\Omega} 
    \Xi :
    \nabla \phi 
    \,
    (x\cdot e^i)
    .
\end{equation*}
A similar argument implies moreover that
\begin{equation*}
    \lim_{\ee \to 0}
    \int_0^T
    \int_{\Omega_\ee} 
    \nabla \phi: 
    \nabla 
    \psi_{\ee,i}
    \otimes 
    d_\ee 
    = 
    \theta 
    \int_0^T\int_{\Omega} 
    \nabla \phi  :
    \Big(\fint_{Y^*} \nabla_y \psi_y\Big)
    \otimes d.
\end{equation*}
Finally, recalling the weak-$\ast$ convergence in \eqref{eq:weak-convergence-d^3-d}, we apply \Cref{lemma:compactness_Omega_eps} with $f_l = \psi_{\ee,i} = \psi_{\ee_l,i}$ and $g_l = (|d_\ee|^2-1)d_\ee$ to obtain
\begin{equation*}
    \lim_{\ee \to 0}
    \int_0^T
    \int_{\Omega_\ee}
        (|d_\ee|^2-1)
        d_\ee
        \cdot 
        \phi
        \,
        \psi_{\ee,i}
        =
        \theta 
        \int_0^T\int_{\Omega}
        (| d |^2-1)
        d 
        \cdot 
        (x\cdot e_i)
        \phi.
\end{equation*}
Summarising, sending $\ee \to 0$ in \eqref{eq:d-equation-applied-to-psi-phi-2} and dividing the result by $\theta$, we have that the following identity is satisfied for any vector function $\phi$ with components in $\mathcal{C}^\infty_c([0,T)\times \Omega)$:
\begin{equation}\label{eq:almost-there1}
\begin{aligned}
	-
        \int_0^T\int_{\Omega} 
        d           
        \cdot 
        (x\cdot e_i)
        \partial_t \phi
        &-
        \int_{\Omega} 
        d_{\rm in}          
        \cdot 
        (x\cdot e_i)
        \phi_{|t = 0} 
        +
        \int_0^T\int_{\Omega} 
        \Xi
        :
        \nabla \phi 
        (x\cdot e_i)
        +
        \\
        &-
        \int_0^T\int_{\Omega} 
        \nabla \phi
        :
        \Big(
        \fint_{Y^*} \nabla_y \psi_i
        \Big)
        \otimes 
        d
        = 
	-
        \int_0^T\int_{\Omega} 
        (|d|^2-1)
        d
        \cdot 
        (x\cdot e_i)
        \phi
        .
\end{aligned}
\end{equation}
Moreover, applying \Cref{limit-of-eq_depsk} to $\varphi  = (x\cdot e_i)\phi$, we have that 
\begin{equation}\label{eq:almost-there2}
\begin{aligned}
	-
        \int_0^T\int_{\Omega} 
        d           
        \cdot 
        (x\cdot e_i)
        \partial_t \phi
        &-
        \int_{\Omega} 
        d_{\rm in}          
        \cdot 
        (x\cdot e_i)
        \phi_{|t = 0} 
        +
        \\
        &+
        \int_0^T\int_{\Omega} 
        \Xi:
        \nabla 
        \big(
            (x\cdot e_i)
            \phi
        \big)
        = 
	-
        \int_0^T\int_{\Omega} 
         (|d|^2-1) 
        d
        \cdot 
        (x\cdot e_i)
        \phi
        .
\end{aligned}
\end{equation}
Subtracting \eqref{eq:almost-there1} to \eqref{eq:almost-there2}, we hence deduce that
\begin{equation*}
    \int_0^T\int_{\Omega} 
    \Xi:
    \big[\nabla 
      (x\cdot e_i)
    \otimes 
    \phi 
    \big]
    +
    \int_0^T\int_{\Omega} 
    \nabla \phi 
    :
    \Big[
    \Big( 
        \fint_{Y^*}\nabla_y \psi_i
    \Big)
    \otimes 
    d
    \Big]
    = 0.
\end{equation*}
Integrating by parts the last integral, we obtain 
\begin{equation*}
    \int_0^T\int_{\Omega} 
    (\Xi^Te_i)
    \cdot 
    \phi 
    -
    \int_0^T
    \int_{\Omega} 
    \phi
    \cdot 
    \Big(
    \Big(
        \fint_{Y^*} \nabla_y \psi_i
    \Big)
    \cdot \nabla 
    \Big)d
    = 0.
\end{equation*}
From the arbitrariness of $\phi$ and recalling \eqref{weak-conv-of-psi-nabla-psi-2}, we deduce that
\begin{equation*}
    \Xi^Te^i = 
    \Big(
    \Big(
        \fint_{Y^*} \nabla_y \psi_i
    \Big)
    \cdot \nabla 
    \Big) 
    d 
    =
    \sum_{j = 1}^2
    \Big(
        e_j \cdot \fint_{Y^*} \nabla_y \psi_i
    \Big)
    \partial_{x_j} d = \sum_{j = 1}^2 \frac{1}{\theta} \mathbb A_{ij} \partial_{x_j} d,
\end{equation*}
which finally implies that 
\begin{equation*}
    \Xi =\frac{1}{\theta}  \mathbb A \nabla d.
\end{equation*}
Replacing this relation in \Cref{limit-of-eq_depsk} implies that 
\begin{equation*}
	\begin{aligned}
		-
            \int_0^T\int_{\Omega}  d \cdot 
              \partial_t \varphi - 
		\theta
            \int_{\Omega}   d_{\rm in} 
            \cdot 
            \varphi
		+
            \int_0^T\int_{\Omega} 
            \frac{1}{\theta}  \mathbb A \nabla d
            :
            \nabla \varphi 
		= 
		-
            \int_0^T\int_{\Omega} 
		(|d|^2-1)
            d
            \cdot 
            \varphi,
	\end{aligned}
\end{equation*}
for any $\varphi \in \mathcal{C}^\infty([0,T]\times \overline{\Omega})$ with $\supp \varphi \subset [0,T) \times \overline{\Omega}$. Thus $d$ is weak solution of \eqref{eq:d-equation-in-proposition}, which concludes the proof of \Cref{prop:convergence-of-d} and \Cref{main_thm}.
\end{proof}

\appendix
\section{A compactness-like result}
\noindent 
In this Appendix, we provide a detailed proof of a Sobolev inequality tailored for homogenisation and averaged functions, which has been instrumental in our earlier arguments for \eqref{eq:Poincar-Wirtinger-to-be-cited-at-the-appendix} and the proof of \Cref{lemma:compactness_Omega_eps} and \Cref{thm:Aubin-Lions-Linfty}.
\begin{lemma}\label{lemma:mean-in-Z}
    Let $s\in [1,\infty]$ and $Y^{*, \ee}_k,\, Y^{*, \ee}_j$ be two contiguous cells (i.e. two cells which share a common side). Denote by $Z^{*, \ee}_{kj} :=Y^{*, \ee}_k\cup Y^{*, \ee}_j$. There exists a  constant $C_s\geq 0$, which depends only on $Y^*$ and $s$, such that
    \begin{equation}\label{est:mean-in-Z}
        \Big|
            \fint_{Y^{*, \ee}_k} f(x) dx - 
            \fint_{Y^{*, \ee}_j} f(x) dx
        \Big|
        \leq 
        C_s \ee^{1-\frac{2}{s}} \| \nabla f \|_{L^s(Z^{*, \ee}_{kj})},
    \end{equation}
    for any function $f \in W^{1,s}(Z^{*, \ee}_{kj})$.
\end{lemma}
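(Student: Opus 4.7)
The plan is to reduce the inequality to a standard Poincar\'e--Wirtinger estimate on the unit-scale domain $Z^* := Y^* \cup (Y^* + e_n)$, where $e_n$ denotes the unit vector pointing from the cell $Y^{*,\ee}_k$ towards its contiguous neighbour $Y^{*,\ee}_j$. First I would rescale: given $f \in W^{1,s}(Z^{*,\ee}_{kj})$, define $g(y) := f(x_k^\ee + \ee y)$ so that $g \in W^{1,s}(Z^*)$ and $\nabla_y g = \ee \,\nabla_x f$. A direct change of variables gives
\begin{equation*}
    \fint_{Y^{*,\ee}_k} f(x)\,dx = \fint_{Y^*} g(y)\,dy,
    \qquad
    \fint_{Y^{*,\ee}_j} f(x)\,dx = \fint_{Y^* + e_n} g(y)\,dy,
\end{equation*}
as well as $\| \nabla_y g \|_{L^s(Z^*)} = \ee^{1-2/s} \| \nabla_x f \|_{L^s(Z^{*,\ee}_{kj})}$, which is precisely the scaling appearing in \eqref{est:mean-in-Z}.

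The core estimate to establish on the fixed domain $Z^*$ is the unit-scale Poincar\'e--Wirtinger-type inequality
\begin{equation}\label{eq:sketch-unit-scale}
    \Big| \fint_{Y^*} g - \fint_{Y^*+e_n} g \Big|
    \leq
    \tilde C_s \, \| \nabla g \|_{L^s(Z^*)},
\end{equation}
with a constant $\tilde C_s$ depending only on $s$ and $Y^*$. I would obtain it by inserting and subtracting the average of $g$ on the full $Z^*$:
\begin{equation*}
    \Big| \fint_{Y^*} g - \fint_{Y^*+e_n} g \Big|
    \leq
    \Big| \fint_{Y^*} \big(g - \fint_{Z^*} g\big)\Big|
    +
    \Big| \fint_{Y^*+e_n} \big(g - \fint_{Z^*} g\big)\Big|,
\end{equation*}
estimating each term by H\"older's inequality and invoking the classical Poincar\'e--Wirtinger inequality on the connected domain $Z^*$.

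The only genuinely delicate point is to justify that classical Poincar\'e--Wirtinger applies on $Z^*$. Since $\mathcal{T} \subset \mathring{Y}$ and $\partial \mathcal{T}$ is of class $\mathcal{C}^\infty$, the particles sit strictly inside their cells; hence the common side between $Y^{*,\ee}_k$ and $Y^{*,\ee}_j$ is entirely contained in $\partial Z^* \setminus \partial \mathcal{T}^\ee_k \cup \partial \mathcal{T}^\ee_j$, so $Z^*$ is connected (since $Y^*$ is assumed connected) and has Lipschitz boundary. This suffices to run Poincar\'e--Wirtinger for all $s \in [1,\infty]$. Once \eqref{eq:sketch-unit-scale} is in hand, rescaling back yields \eqref{est:mean-in-Z} with $C_s = \tilde C_s$. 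The hardest part, as usual in these unit-cell arguments, is the geometric verification that the union of two perforated cells remains connected and regular; everything else is a bookkeeping of the scaling exponent $\ee^{1-2/s}$.
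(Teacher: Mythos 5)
Your proposal is correct, and it shares the paper's first step exactly: rescale by $g(y)=f(x_k^\ee+\ee y)$ to the fixed doubled cell $Z^*=Y^*\cup(Y^*+e_n)$, note that averages are scale-invariant and that $\| \nabla_y g\|_{L^s(Z^*)}=\ee^{1-2/s}\|\nabla_x f\|_{L^s(Z^{*,\ee}_{kj})}$ in dimension two, so everything hinges on a unit-scale estimate on $Z^*$. Where you diverge is in how that unit-scale inequality is proved. The paper establishes it directly by contradiction: it normalises a putative violating sequence to $\omega_n$ with $\|\omega_n\|_{L^s(Z^*)}=1$, zero average on $Z^*$ and $\|\nabla\omega_n\|_{L^s(Z^*)}\to 0$, then uses the compact embedding $W^{1,s}(Z^*)\ssubset L^s(Z^*)$ to extract a limit which is constant, average-free, yet of unit norm --- in effect re-deriving a Poincar\'e--Wirtinger inequality tailored to the difference of the two cell averages. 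You instead insert the average over all of $Z^*$, estimate each term by H\"older, and quote the classical Poincar\'e--Wirtinger inequality on $Z^*$. Both routes require exactly the same geometric input, namely that $Z^*$ is a connected bounded domain with Lipschitz boundary (guaranteed since $\mathcal T\subset\mathring Y$ is smooth and $Y^*$ is connected, so the shared face is unobstructed); your version buys brevity by outsourcing the compactness to the standard PW result (valid for all $s\in[1,\infty]$ on such domains), while the paper's version is self-contained, using only Rellich--Kondrachov. One small slip of phrasing: the common side of the two cells lies in the interior of $Z^*$, not in $\partial Z^*$; it is precisely its being interior and particle-free that makes $Z^*$ connected. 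This does not affect the validity of your argument.
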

\begin{proof}
    We denote by $\tilde Y^*$ the unitary cell $\{ (x-x_k^\ee)/\ee,\, x \in Y^{*, \ee}_j\}$ 
    which is contiguous to $Y^*$ and by $Z^*$ the union $Y^*\cup \tilde Y^*$. Hence $Z^\ee_{kj}= \{ x_k^\ee + \ee z,\text{ with }z \in Z^*\}$. We claim that there exists a constant $C_s> 0$, which depends only on $Y^*$, such that 
    \begin{equation*}
        \Big|
            \fint_{Y^{*}} g(z) dz - 
            \fint_{\tilde Y^{*}} g(z) dz
        \Big|
        \leq 
        C  \| \nabla_z g \|_{L^p(Z^{*})},
    \end{equation*}
    for any function $g \in W^{1,s}(Z^{*})$. The inequality \eqref{est:mean-in-Z} follows thus by rescaling the variable $x = x_k^\ee + \ee z$, with $z \in Z^*$. Assume by contradiction that there is a non-trivial sequence $(g_n)_{n\in \mathbb N}$ in $W^{1,s}(Z^{*})$ satisfying 
    \begin{equation*}
        \Big|
            \fint_{Y^{*}} g_n(z) dz - 
            \fint_{\tilde Y^{*}} g_n(z) dz
        \Big|
        >
        n
        \| \nabla_z g_n \|_{L^s(Z^{*})}.
    \end{equation*}
    We consider the sequence $(\omega_n)_{n\in \mathbb N}$ in $W^{1,s}(Z^{*})$ defined by
    \begin{equation*}
        \omega_n(z) := 
        \frac{g_n(z) - \fint_{\tilde Z^{*}} g_n(y) dy}{\Big\| g_n - \fint_{\tilde Z^{*}} g_n(y) dy \Big\|_{L^p(Z^*)}}\quad \Rightarrow \quad  
         \Big|
            \fint_{Y^{*}} \omega_n(z) dz - 
            \fint_{\tilde Y^{*}} \omega_n(z) dz
        \Big|
        >
        n
        \| \nabla_z \omega_n \|_{L^s(Z^{*})},
    \end{equation*}
    so that $ \|\omega_n \|_{L^s(Z^*)}= 1$, as well as $\fint_{Z^*}\omega_n(z)dz = 0$. Moreover $\| \nabla \omega_n \|_{L^s(Z^*)}\leq \frac{2}{n|Y^*|}\| \omega_n \|_{L^1(Z^*)}\to 0 $ as $n \to \infty$, thus there exist a subsequence $(\omega_{n_m})_{m\in \mathbb N}$ and function $\omega$ in $W^{1,s}(Z^*)$, such that $\omega_{n_m}\to \omega $ strongly in $L^s(Z^*)$ and $\nabla_z \omega_{n_m} \rightharpoonup \nabla_z \omega$ weakly in $L^s(Z^*)$. Because of the strong convergence, $\omega$ is average free in $Z^*$ and its norm $\| \omega \|_{L^s(Z^*)} = 1$. Additionally, from the weak convergence, $\| \nabla_z \omega \|_{L^s(Z^*)} \leq \liminf_{m \to \infty} \| \nabla_z \omega_{n_m} \|_{L^s(Z^*)} = 0$, thus $\omega$ is constant in $Z^*$. Being average free, $\omega = 0$ everywhere in $Z^*$, which contradicts $\| \omega \|_{L^s(Z^*)} = 1$.
\end{proof}

\subsection*{Acknowledgment} During the course of this project, F.D.A.~was partially supported by the Bavarian Funding Programme for the Initiation of International Projects (BayIntAn\_UWUE\_2022\_139).
\\
A.Z.~has been partially supported by the Basque Government through the BERC 2022-2025 program. Additionally, he was supported by the Spanish State Research Agency through Severo Ochoa CEX2021-001142 and through project PID2020-114189RB-I00 funded by Agencia Estatal de Investigaci\'on
(PID2020-114189RB-I00 / AEI / 10.13039/501100011033). A.Z. was also partially supported  by a grant of the Ministry of Research, Innovation and Digitization, CNCS - UEFISCDI, project number PN-III-P4-PCE-2021-0921, within PNCDI III.  A.S. and A.Z. acknowledge the hospitality in Hausdorff Research Institute for Mathematics funded by the Deutsche Forschungsgemeinschaft (DFG, German Research Foundation) under Germany's Excellence Strategy EXC-2047/1-390685813. The authors would like to thank the Isaac Newton Institut for Mathematical Sciences for support and hospitality during the programme "The design of new materials" when work on this paper was undertaken. This work was supported by EPSRC grant numbers EP/K032208/1 and EP/R014604/1.

\bibliographystyle{abbrv}
\bibliography{literature}

\end{document}